\documentclass[final,3p,times]{elsarticle}
\usepackage{amsmath,amssymb,amsthm,bm}
\usepackage{booktabs,tabularx,array}
\usepackage{graphicx}
\usepackage{microtype}
\usepackage{tikz}
\usetikzlibrary{arrows,positioning,calc}
\usepackage[hidelinks]{hyperref}
\hypersetup{pdfauthor={Jiayuan Peng, Hao Dong, Jiale Linghu}}

\journal{Computer Methods in Applied Mechanics and Engineering}

\newtheorem{lemma}{Lemma}
\newtheorem{proposition}{Proposition}
\newtheorem{theorem}{Theorem}

\makeatletter
\@ifundefined{highlights}{%
}{}
\makeatother

\newcommand{\eps}{\varepsilon}
\newcommand{\dd}{\,\mathrm{d}}
\newcommand{\R}{\mathbb{R}}
\newcommand{\dt}{\Delta t}
\newcommand{\bu}{\mathbf u}
\newcommand{\bv}{\mathbf v}
\newcommand{\bU}{\mathbf U}
\newcommand{\bV}{\mathbb V}
\newcommand{\bL}{\mathbb L}
\newcommand{\norm}[1]{\left\lVert#1\right\rVert}

\begin{document}

\begin{frontmatter}

\title{POD-accelerated high-order multiscale method with corrected-basis reconstruction for transient multi-continuum problems}

\author[addr1]{Jiayuan Peng}
\author[addr1]{Hao Dong\corref{cor1}}
\ead{donghaoxd@xidian.edu.cn}
\author[addr1]{Jiale Linghu}
\cortext[cor1]{Corresponding author.}

\address[addr1]{School of Mathematics and Statistics, Xidian University, Xi'an 710071, Shaanxi, People's Republic of China}

\begin{abstract}
Multi-continuum models provide an effective macroscopic description of transient flow in fractured and highly heterogeneous porous media by introducing several interacting state variables at each macroscale location. Although high-order multiscale (HOMS) methods reduce the cost of resolving microscopic heterogeneities while recovering local oscillatory features, repeated solution of the coupled homogenized system and repeated high-order reconstruction remain expensive in long-time and large-scale three-dimensional simulations. This work develops a proper orthogonal decomposition (POD) accelerated HOMS framework for transient multi-continuum problems. A vector-valued POD space is constructed from snapshots of the coupled homogenized solution, and the resulting block system is projected onto a low-dimensional space. Two recovery strategies are considered. The first reconstructs the reduced homogenized fields and performs full HOMS assembly at every output time. The second applies the spatial part of the multiscale recovery operator to retained POD modes offline and evaluates the online solution by direct linear combinations of corrected bases and time-derivative bases. An error estimate between the fully discrete homogenized solution and its POD-reduced approximation is established for the coupled system. A recovered POD-HOMS estimate is then obtained by combining the HOMS consistency error with POD truncation errors propagated through value, gradient, inter-continuum exchange, second-derivative, and time-derivative recovery terms. Numerical experiments for two- and three-dimensional heterogeneous configurations show that the reduced solutions retain the accuracy and local oscillatory behavior of the corresponding HOMS solution while substantially decreasing computational cost. In a large three-dimensional test, corrected-basis reconstruction remains feasible when direct fine-scale FEM-based approaches exceed the available memory.
\end{abstract}

\begin{keyword}
multi-continuum problems \sep proper orthogonal decomposition \sep high-order multiscale method \sep heterogeneous porous media \sep reduced-order modeling \sep homogenization
\end{keyword}

\end{frontmatter}

\section{Introduction}
\label{sec:introduction}

Flow and transport in fractured and highly heterogeneous porous media arise in a wide range of engineering and geoscientific applications, including groundwater flow, hydrocarbon recovery, geothermal exploitation, contaminant transport, and geological carbon storage. Such media commonly contain low-permeability matrix regions together with highly conductive fractures or channels, resulting in strong heterogeneity, high coefficient contrast, and pronounced multiscale characteristics. Moreover, the pressure states associated with different pore systems may not reach local equilibrium, so a single macroscopic variable is often insufficient to characterize the coupled transient behavior. Accurate and efficient simulation of these problems remains challenging, especially when microscopic heterogeneities, inter-continuum exchange, and long-time evolution coexist~\cite{berre2019}.

To describe nonequilibrium interactions among different pore systems, a variety of multi-continuum models have been developed. The classical dual-porosity concepts of Barenblatt et al.~\cite{barenblatt1960} and Warren and Root~\cite{warren1963} introduce distinct averaged pressures for interacting matrix and fracture systems. Kazemi et al.~\cite{kazemi1976} subsequently extended the dual-porosity framework to numerical simulation of two-phase flow in naturally fractured reservoirs. To represent transient exchange inside matrix blocks more accurately, Pruess and Narasimhan~\cite{pruess1985} proposed the multiple interacting continua (MINC) concept, in which the matrix is subdivided into a sequence of interacting continua. Wu and Pruess~\cite{wu1988} further developed a multiple-porosity formulation for naturally fractured reservoirs. These continuum-based models remain important tools for describing fractured-media flow, as summarized in the review by Berre et al.~\cite{berre2019}. However, when fine-scale inclusions, channels, or fracture structures are explicitly contained in each continuum, direct numerical simulation still requires very fine spatial discretizations and may lead to prohibitive computational cost.

To reduce the computational burden caused by fine-scale heterogeneities, various multiscale and upscaling techniques have been developed. The multiscale finite element method (MsFEM) constructs local basis functions adapted to heterogeneous coefficients and provides an efficient coarse-scale approximation without globally resolving all microscopic features~\cite{hou1997}. The generalized multiscale finite element method (GMsFEM) further introduces systematically enriched local approximation spaces for problems with complex and high-contrast coefficients~\cite{efendiev2013}, while the constraint energy minimizing GMsFEM (CEM-GMsFEM) provides localized basis functions with contrast-robust convergence properties~\cite{chung2018}. For multi-continuum flow, nonlocal multicontinuum (NLMC) upscaling constructs physically meaningful continuum variables through constrained local basis problems~\cite{vasilyeva2019}, and related generalized multiscale formulations have been developed for coupled unsaturated-flow systems in fractured porous media~\cite{spiridonov2020}. More recently, multicontinuum equations have been derived through homogenization-type expansions and constrained cell problems~\cite{efendiev2023,chung2024}, and the framework has been extended to coupled poroelasticity models~\cite{ammosov2026}. Efficient implicit--explicit strategies have also been investigated for decoupling transient multicontinuum systems~\cite{vasilyeva2024}. These approaches substantially reduce the cost associated with spatial heterogeneity. Nevertheless, spatial upscaling alone does not eliminate the computational expense of repeatedly advancing the resulting coupled transient system over many time levels or in repeated simulations.

Reduced-order modeling provides a complementary way to further decrease the algebraic and temporal cost of large-scale transient simulations. Proper orthogonal decomposition (POD), closely related to the method of snapshots introduced by Sirovich~\cite{sirovich1987}, constructs a low-dimensional approximation space from representative solution data. Rigorous POD-Galerkin error analyses for parabolic and evolutionary partial differential equations were developed by Kunisch and Volkwein~\cite{kunisch2001,kunisch2002}, and subsequent work related the reduced-order approximation error more explicitly to discarded POD eigenvalues and modes~\cite{singler2014}. The quality of the resulting reduced space is also influenced by the snapshot ensemble, and optimal snapshot-location strategies for time-dependent problems have therefore been investigated~\cite{kunisch2010snapshot}. More generally, POD-based numerical methods for partial differential equations have been systematically developed within finite-element and related discretization frameworks, together with corresponding stability and error analyses~\cite{luo2018pod}. More recent studies have also combined POD with Krylov-type enrichment to improve reduced approximations for partial differential equations with variable coefficients~\cite{wang2024krylov}. Projection-based model reduction has since become a broad framework for accelerating large-scale dynamical systems~\cite{benner2015}. For heterogeneous porous-media flow, POD and related mode-decomposition techniques have been combined with multiscale approximations to construct reduced models for high-contrast permeability fields~\cite{ghommem2013,ghommem2014}, and POD-based reduced-order modeling has also been applied directly to groundwater flow in heterogeneous media~\cite{dey2020}. These studies demonstrate the effectiveness of POD in reducing the dimension of transient flow systems. However, for a high-order multiscale method, reducing only the homogenized system is not sufficient, because the final quantity of interest is generally the reconstructed multiscale field rather than the smooth homogenized solution itself.

In a high-order multiscale (HOMS) method, the homogenized solution must be further reconstructed to recover microscopic oscillations. For transient multi-continuum problems, the recovery procedure involves not only the values of the homogenized pressures, but also their spatial gradients, higher-order spatial derivatives, inter-continuum pressure differences, cross-continuum gradients, and time derivatives. Consequently, two computational and analytical difficulties remain. First, repeated high-order recovery on a fine output mesh may become a dominant computational cost, particularly for long-time and large-scale three-dimensional simulations. Second, a POD error estimate established only for the homogenized system does not directly characterize how the discarded modes affect the final reconstructed multiscale fields. Therefore, an efficient reduced-order HOMS method should reduce both the cost of the coupled transient evolution and the cost of repeated high-order reconstruction, while also providing an error estimate for the recovered multiscale solution.
To overcome these difficulties, this paper develops a POD-accelerated HOMS computational method for transient multi-continuum problems in highly heterogeneous media. The HOMS framework established in our previous work~\cite{dong2026homs} is used as the spatial multiscale foundation. The interacting homogenized pressures are treated as a unified block state, and a joint mass-weighted POD space is constructed from coupled snapshots so that the correlations induced by diffusion, drift, and inter-continuum exchange are retained in the reduced system. Projection of the coupled homogenized equations onto this low-dimensional space substantially reduces the cost of the transient macroscopic evolution.


The remainder of this paper is organized as follows. Section~\ref{sec:model} introduces the transient multi-continuum model and presents the HOMS representation, cell problems, homogenized block system, fully discrete formulation, and vector-valued POD reduction. Section~\ref{sec:analysis} gives the convergence analysis of the proposed POD-HOMS method, including an error estimate between the discrete homogenized solution and its POD-reduced approximation and the recovered multiscale error estimate. Section~\ref{sec:algorithms} develops the offline--online computational framework and presents the two HOMS recovery strategies. Section~\ref{sec:numerical} reports the numerical experiments for accuracy, computational efficiency, and large-scale applicability. Finally, conclusions are given in Section~\ref{sec:conclusions}.

\section{Multi-continuum model and POD-reduced formulation}
\label{sec:model}

Let $\Omega\subset\R^d$, $d=2,3$, be a bounded Lipschitz domain, $T>0$, and $Y=(0,1)^d$. The microscopic coordinate is $\bm y=\bm x/\eps$, and repeated spatial indices are summed from $1$ to $d$. 

\subsection{Fine-scale model and HOMS representation}
\label{subsec:fine-homs}

For $l=1,2$ and $\bar l=3-l$, let $u_l^\eps$ denote the pressure in continuum $l$. The representative fine-scale system is
\begin{equation}
\left\{
\begin{aligned}
c_1(\bm y)\frac{\partial u_1^\eps}{\partial t}
&=\frac{\partial}{\partial x_i}\!\left(
\kappa_{1,ij}(\bm y)\frac{\partial u_1^\eps}{\partial x_j}\right)
+q_1+\eps^{-1}Q_1(\bm y)(u_2^\eps-u_1^\eps),
&& (\bm x,t)\in\Omega\times(0,T),\\
c_2(\bm y)\frac{\partial u_2^\eps}{\partial t}
&=\frac{\partial}{\partial x_i}\!\left(
\kappa_{2,ij}(\bm y)\frac{\partial u_2^\eps}{\partial x_j}\right)
+q_2+\eps^{-1}Q_2(\bm y)(u_1^\eps-u_2^\eps),
&& (\bm x,t)\in\Omega\times(0,T),\\
u_1^\eps(\bm x,t)&=u_2^\eps(\bm x,t)=0,
&& (\bm x,t)\in\partial\Omega\times(0,T),\\
u_1^\eps(\bm x,0)&=u_{1,0}(\bm x),\quad u_2^\eps(\bm x,0)=u_{2,0}(\bm x),
&& \bm x\in\Omega.
\end{aligned}
\right.
\label{eq:fine-system}
\end{equation}

Here $u_{l,0}$ is the prescribed initial pressure in continuum $l$. The storage coefficients $c_l$ are positive and bounded, the permeability tensors $\kappa_l$ are uniformly elliptic and $Y$-periodic, and $Q_l\ge0$ are exchange coefficients. In the conservative symmetric case $Q_1=Q_2$.

The second-order HOMS representation established in~\cite{dong2026homs} is summarized by
\begin{align}
u_l^{(2\eps)}={}&u_l^{(0)}
+\eps N_l^\alpha\frac{\partial u_l^{(0)}}{\partial x_\alpha}
+\eps M_l(u_{\bar l}^{(0)}-u_l^{(0)})
\notag\\
&+\eps^2\left[
G_l\frac{\partial u_l^{(0)}}{\partial t}
+N_l^{\alpha\beta}\frac{\partial^2 u_l^{(0)}}{\partial x_\alpha\partial x_\beta}
+C_l^\alpha\frac{\partial u_l^{(0)}}{\partial x_\alpha}
+F_l^\alpha\frac{\partial u_{\bar l}^{(0)}}{\partial x_\alpha}
+K_l(u_{\bar l}^{(0)}-u_l^{(0)})
\right],
\label{eq:homs-representation}
\end{align}
where every cell function is evaluated at $\bm y=\bm x/\eps$. For $l=1,2$ and $\alpha=1,\ldots,d$, the first-order conductivity corrector $N_l^\alpha$ and exchange corrector $M_l$ are defined by two separate cell problems:
\begin{equation}
\left\{
\begin{aligned}
\frac{\partial}{\partial y_i}\!\left(\kappa_{l,ij}(\bm y)
\frac{\partial N_l^\alpha(\bm y)}{\partial y_j}\right)
&=-\frac{\partial\kappa_{l,i\alpha}(\bm y)}{\partial y_i},
&&\bm y\in Y,\\
N_l^\alpha(\bm y)&=0,
&&\bm y\in\partial Y.
\end{aligned}
\right.
\label{eq:first-conductivity-cell}
\end{equation}
\begin{equation}
\left\{
\begin{aligned}
\frac{\partial}{\partial y_i}\!\left(\kappa_{l,ij}(\bm y)
\frac{\partial M_l(\bm y)}{\partial y_j}\right)
&=Q_l(\bm y),
&&\bm y\in Y,\\
M_l(\bm y)&=0,
&&\bm y\in\partial Y.
\end{aligned}
\right.
\label{eq:first-exchange-cell}
\end{equation}
The homogeneous Dirichlet cell closure in \eqref{eq:first-conductivity-cell} and \eqref{eq:first-exchange-cell} is used consistently in the effective coefficients and all recovery terms.

For $l,m=1,2$ and $i,j=1,\ldots,d$, the effective coefficients entering the homogenized equations are determined by the first-order cell functions $N_l^j$ and $M_l$:
\begin{align}
c_l^*&=\int_Y c_l(\bm y)\dd\bm y,
\notag\\
\kappa_{l,ij}^*&=\int_Y\left[\kappa_{l,ij}(\bm y)
+\kappa_{l,ik}(\bm y)\frac{\partial N_l^j(\bm y)}{\partial y_k}\right]\dd\bm y,
\notag\\
K_{lm,i}^*&=\int_Y\left[\kappa_{l,ij}(\bm y)
\frac{\partial M_l(\bm y)}{\partial y_j}
+Q_l(\bm y)N_m^i(\bm y)\right]\dd\bm y,
\notag\\
Q_l^*&=-\int_Y Q_l(\bm y)\bigl(M_1(\bm y)+M_2(\bm y)\bigr)\dd\bm y.
\label{eq:effective-coefficients}
\end{align}
Here $\bm K_{lm}^*=(K_{lm,1}^*,\ldots,K_{lm,d}^*)^T$; in particular, $\bm K_{ll}^*$ multiplies the gradient of the pressure in continuum $l$, whereas $\bm K_{l\bar l}^*$ multiplies that in the other continuum. Since $Y=(0,1)^d$ has unit volume, the integrals are also cell averages.

The five second-order correctors in \eqref{eq:homs-representation} complete the recovery library. For $l=1,2$ and $\alpha,\beta=1,\ldots,d$, each corrector is specified together with its boundary condition as follows. The time-derivative corrector $G_l$ satisfies
\begin{equation*}
\left\{
\begin{aligned}
\frac{\partial}{\partial y_i}\!\left(\kappa_{l,ij}(\bm y)
\frac{\partial G_l(\bm y)}{\partial y_j}\right)
&=c_l(\bm y)-c_l^*,
&&\bm y\in Y,\\
G_l(\bm y)&=0,
&&\bm y\in\partial Y.
\end{aligned}
\right.
\end{equation*}
The second-gradient corrector $N_l^{\alpha\beta}$ satisfies
\begin{equation*}
\left\{
\begin{aligned}
\frac{\partial}{\partial y_i}\!\left(\kappa_{l,ij}(\bm y)
\frac{\partial N_l^{\alpha\beta}(\bm y)}{\partial y_j}\right)
={}&\kappa_{l,\alpha\beta}^*-\kappa_{l,\alpha\beta}(\bm y)
-\frac{\partial}{\partial y_i}\!\left(\kappa_{l,i\alpha}(\bm y)N_l^\beta(\bm y)\right)
-\kappa_{l,\alpha j}(\bm y)\frac{\partial N_l^\beta(\bm y)}{\partial y_j},
&&\bm y\in Y,\\
N_l^{\alpha\beta}(\bm y)={}&0,
&&\bm y\in\partial Y.
\end{aligned}
\right.
\end{equation*}
The same-continuum first-gradient corrector $C_l^\alpha$ is defined by
\begin{equation*}
\left\{
\begin{aligned}
\frac{\partial}{\partial y_i}\!\left(\kappa_{l,ij}(\bm y)
\frac{\partial C_l^\alpha(\bm y)}{\partial y_j}\right)
={}&\frac{\partial}{\partial y_i}\!\left(\kappa_{l,i\alpha}(\bm y)M_l(\bm y)\right)
+Q_l(\bm y)N_l^\alpha(\bm y)
-K_{ll,\alpha}^*
+\kappa_{l,\alpha j}(\bm y)\frac{\partial M_l(\bm y)}{\partial y_j},
&&\bm y\in Y,\\
C_l^\alpha(\bm y)={}&0,
&&\bm y\in\partial Y.
\end{aligned}
\right.
\end{equation*}
The cross-continuum first-gradient corrector $F_l^\alpha$ satisfies
\begin{equation*}
\left\{
\begin{aligned}
\frac{\partial}{\partial y_i}\!\left(\kappa_{l,ij}(\bm y)
\frac{\partial F_l^\alpha(\bm y)}{\partial y_j}\right)
={}&-\frac{\partial}{\partial y_i}\!\left(\kappa_{l,i\alpha}(\bm y)M_l(\bm y)\right)
+K_{l\bar l,\alpha}^*
-\kappa_{l,\alpha j}(\bm y)\frac{\partial M_l(\bm y)}{\partial y_j}
-Q_l(\bm y)N_{\bar l}^\alpha(\bm y),
&&\bm y\in Y,\\
F_l^\alpha(\bm y)={}&0,
&&\bm y\in\partial Y.
\end{aligned}
\right.
\end{equation*}
Finally, the second-order exchange corrector $K_l$ is given by
\begin{equation*}
\left\{
\begin{aligned}
\frac{\partial}{\partial y_i}\!\left(\kappa_{l,ij}(\bm y)
\frac{\partial K_l(\bm y)}{\partial y_j}\right)
&=Q_l^*+Q_l(\bm y)\bigl(M_1(\bm y)+M_2(\bm y)\bigr),
&&\bm y\in Y,\\
K_l(\bm y)&=0,
&&\bm y\in\partial Y.
\end{aligned}
\right.
\end{equation*}
The first- and second-order cell problems above specify every cell function used by the second-order recovery. Their asymptotic derivation and consistency analysis follow the preceding HOMS work~\cite{dong2026homs}; they are stated here so that the model and the recovery operator are self-contained before the POD reduction is introduced.

\subsection{Homogenized coupled system}
\label{subsec:hom-system}

The macroscopic pressures satisfy
\begin{equation}
\left\{
\begin{aligned}
c_1^*\frac{\partial u_1^{(0)}}{\partial t}
&=\frac{\partial}{\partial x_i}\!\left(
\kappa_{1,ij}^*\frac{\partial u_1^{(0)}}{\partial x_j}\right)
+K_{12,j}^*\frac{\partial u_2^{(0)}}{\partial x_j}
-K_{11,j}^*\frac{\partial u_1^{(0)}}{\partial x_j}
+Q_1^*(u_2^{(0)}-u_1^{(0)})+q_1,
&& (\bm x,t)\in\Omega\times(0,T),\\
c_2^*\frac{\partial u_2^{(0)}}{\partial t}
&=\frac{\partial}{\partial x_i}\!\left(
\kappa_{2,ij}^*\frac{\partial u_2^{(0)}}{\partial x_j}\right)
+K_{21,j}^*\frac{\partial u_1^{(0)}}{\partial x_j}
-K_{22,j}^*\frac{\partial u_2^{(0)}}{\partial x_j}
+Q_2^*(u_1^{(0)}-u_2^{(0)})+q_2,
&& (\bm x,t)\in\Omega\times(0,T),\\
u_1^{(0)}(\bm x,t)&=u_2^{(0)}(\bm x,t)=0,
&& (\bm x,t)\in\partial\Omega\times(0,T),\\
u_1^{(0)}(\bm x,0)&=u_{1,0}(\bm x),\quad
u_2^{(0)}(\bm x,0)=u_{2,0}(\bm x),
&& \bm x\in\Omega.
\end{aligned}
\right.
\label{eq:hom-system}
\end{equation}
The effective coefficients in \eqref{eq:hom-system} are given explicitly by \eqref{eq:effective-coefficients}.

Set $\bV=H_0^1(\Omega)^2$ and $\bL=L^2(\Omega)^2$. For $\bu=(u_1,u_2)^T$ and $\bv=(v_1,v_2)^T$, define
\begin{equation*}
M_0(\bu,\bv)=\sum_{l=1}^2\int_\Omega c_l^*u_lv_l\dd\bm x
\end{equation*}
and $B_0=B_d+B_K+B_Q$, where
\begin{align*}
B_d(\bu,\bv)
&=\sum_{l=1}^2\int_\Omega\kappa_{l,ij}^*
\frac{\partial u_l}{\partial x_j}\frac{\partial v_l}{\partial x_i}\dd\bm x,
\\
B_K(\bu,\bv)
&=-\int_\Omega K_{12,j}^*\frac{\partial u_2}{\partial x_j}v_1\dd\bm x
+\int_\Omega K_{11,j}^*\frac{\partial u_1}{\partial x_j}v_1\dd\bm x
\\
&\quad-\int_\Omega K_{21,j}^*\frac{\partial u_1}{\partial x_j}v_2\dd\bm x
+\int_\Omega K_{22,j}^*\frac{\partial u_2}{\partial x_j}v_2\dd\bm x,
\\
B_Q(\bu,\bv)
&=\int_\Omega Q_1^*(u_1-u_2)v_1\dd\bm x
+\int_\Omega Q_2^*(u_2-u_1)v_2\dd\bm x.
\end{align*}
The weak problem is
\begin{equation}
M_0\!\left(\frac{\partial\bu^{(0)}}{\partial t},\bv\right)
+B_0(\bu^{(0)},\bv)=F(\bv;t),
\qquad \forall\bv\in\bV,
\label{eq:hom-weak}
\end{equation}
where $F(\bv;t)=\sum_{l=1}^2\int_\Omega q_lv_l\dd\bm x$. The exchange and cross-gradient contributions need not be symmetric, which motivates the G\aa rding framework in Section~\ref{sec:analysis}.

The variational problem \eqref{eq:hom-weak} gives the continuous-in-time formulation of the homogenized multi-continuum system. Since the POD basis used in this work is constructed from numerically generated snapshots, we next discretize \eqref{eq:hom-weak} in both space and time. The resulting fully discrete homogenized solution provides the snapshot data for the POD construction and serves as the reference solution for analyzing the error introduced by POD reduction.

\subsection{Fully discrete approximation of the homogenized system}
\label{subsec:block-system}

We now discretize the continuous homogenized problem \eqref{eq:hom-weak}. Let $V_h\subset H_0^1(\Omega)$ be a conforming finite element space and set $\bV_h=V_h^2$. For the temporal grid $t_n=n\dt$, $n=0,\ldots,N_t$, with $T=N_t\dt$, the finite-element and backward Euler approximation of \eqref{eq:hom-weak} reads: find $\bU_h^n\in\bV_h$ such that
\begin{equation}
M_0\!\left(\frac{\bU_h^n-\bU_h^{n-1}}{\dt},\bv_h\right)
+B_0(\bU_h^n,\bv_h)=F(\bv_h;t_n),
\qquad\forall\bv_h\in\bV_h.
\label{eq:fom-weak}
\end{equation}
Here $\bU_h^n$ denotes the fully discrete finite-element approximation of the continuous homogenized solution $\bu^{(0)}(t_n)$.

With scalar finite element basis $\{\varphi_i\}_{i=1}^{N_h}$, define
\begin{align*}
(\mathbf M_l)_{ij}&=\int_\Omega c_l^*\varphi_j\varphi_i\dd\bm x,&
(\mathbf D_l)_{ij}&=\int_\Omega\kappa_{l,pq}^*
\frac{\partial\varphi_j}{\partial x_q}\frac{\partial\varphi_i}{\partial x_p}\dd\bm x,
\\
(\mathbf K_{lm})_{ij}&=\int_\Omega K_{lm,p}^*
\frac{\partial\varphi_j}{\partial x_p}\varphi_i\dd\bm x,&
(\mathbf Q_l)_{ij}&=\int_\Omega Q_l^*\varphi_j\varphi_i\dd\bm x.
\end{align*}
The global matrices are
\begin{equation}
\mathbf M=
\begin{pmatrix}\mathbf M_1&\mathbf0\\\mathbf0&\mathbf M_2\end{pmatrix},
\qquad
\mathbf A=
\begin{pmatrix}
\mathbf D_1+\mathbf K_{11}+\mathbf Q_1&-\mathbf K_{12}-\mathbf Q_1\\
-\mathbf K_{21}-\mathbf Q_2&\mathbf D_2+\mathbf K_{22}+\mathbf Q_2
\end{pmatrix}.
\label{eq:block-matrices}
\end{equation}
Let $\mathbf U^n=((\mathbf U_1^n)^T,(\mathbf U_2^n)^T)^T\in\R^{2N_h}$ be the algebraic coefficient vector of the finite-element function $\bU_h^n\in\bV_h$. More precisely, $U_{l,h}^n(\bm x)=\sum_{i=1}^{N_h}(\mathbf U_l^n)_i\varphi_i(\bm x)$ for $l=1,2$. Thus $\mathbf U^n$ and $\bU_h^n$ represent the same fully discrete approximation in coefficient-vector and finite-element-function form, respectively. The fully discrete evolution then has the matrix form
\begin{equation}
\left(\frac{\mathbf M}{\dt}+\mathbf A\right)\mathbf U^n
=\mathbf F^n+\frac{\mathbf M}{\dt}\mathbf U^{n-1}.
\label{eq:fom-matrix}
\end{equation}
Solving \eqref{eq:fom-weak}, or equivalently the algebraic system \eqref{eq:fom-matrix}, over the training interval produces the discrete homogenized trajectory used for POD snapshot generation. The POD reduction introduced below is therefore performed on the fully discrete homogenized system rather than directly on the continuous problem \eqref{eq:hom-weak}.

\subsection{Vector-valued POD-Galerkin reduction}
\label{subsec:vector-pod}

The POD reduction is performed at the fully discrete level. Specifically, snapshots are collected from the discrete homogenized trajectory $\{\bU_h^n\}$ generated by \eqref{eq:fom-weak}. Consequently, the POD-reduced solution $\bU_r^n$ introduced below approximates the discrete homogenized solution $\bU_h^n$, rather than directly approximating the continuous solution $\bu^{(0)}(t)$. The joint state is $\bU_h^n=(U_{1,h}^n,U_{2,h}^n)^T$. From $m$ coupled snapshots, define
\begin{equation}
C_{ij}=\frac1m M_0(\bU_h^i,\bU_h^j),
\qquad i,j=1,\ldots,m.
\label{eq:pod-correlation}
\end{equation}
If $C\bm\xi^k=\lambda_k\bm\xi^k$ with $\lambda_1\ge\cdots\ge\lambda_s>0$, the mass-orthonormal vector modes are
\begin{equation}
\bm\phi_k=\frac{1}{\sqrt{m\lambda_k}}\sum_{j=1}^m\xi_j^k\bU_h^j
=\begin{pmatrix}\phi_{1,k}\\\phi_{2,k}\end{pmatrix},
\qquad k=1,\ldots,s.
\label{eq:vector-pod-modes}
\end{equation}
The rank $r$ is selected using
\begin{equation}
\frac{\sum_{k=1}^r\lambda_k}{\sum_{k=1}^s\lambda_k}\ge1-\delta_{\rm pod}.
\label{eq:pod-energy}
\end{equation}
Unlike separate componentwise reductions, the joint space $\bV_r=\operatorname{span}\{\bm\phi_1,\ldots,\bm\phi_r\}$ preserves snapshot-level correlations between matrix and fracture pressures. Writing
\begin{equation*}
\bU_r^n=\sum_{k=1}^rz_k^n\bm\phi_k
\end{equation*}
and testing by $\bV_r$ gives
\begin{equation}
\left(\frac{\mathbf M_r}{\dt}+\mathbf A_r\right)\mathbf z^n
=\mathbf f_r^n+\frac{\mathbf M_r}{\dt}\mathbf z^{n-1},
\label{eq:rom-system}
\end{equation}
where
\begin{equation*}
(\mathbf M_r)_{ij}=M_0(\bm\phi_j,\bm\phi_i),\quad
(\mathbf A_r)_{ij}=B_0(\bm\phi_j,\bm\phi_i),\quad
(\mathbf f_r^n)_i=F(\bm\phi_i;t_n).
\end{equation*}
\section{Convergence analysis of the multi-continuum POD-HOMS method}
\label{sec:analysis}

The POD basis and the reduced system in Section~\ref{subsec:vector-pod} are constructed from the fully discrete homogenized solutions introduced in Section~\ref{subsec:block-system}. Accordingly, the reduction error is analyzed at the discrete level by comparing $\bU_h^n$ and $\bU_r^n$. Since both solutions employ the same spatial finite-element discretization and backward Euler time scheme, this comparison isolates the additional error introduced by the POD reduction. The continuous variational formulation \eqref{eq:hom-weak} is used to establish the continuity and G\aa rding stability properties of the coupled bilinear form. The resulting reduction error is then propagated through the discrete multi-continuum HOMS recovery operator and combined with the inherited HOMS consistency, spatial-discretization, and time-discretization errors. The recovery analysis also involves continuum differences, cross-continuum gradients, second derivatives, and discrete time derivatives.

\subsection{Stability of the coupled bilinear form}
\label{subsec:coupled-stability}

Equip $\bV=H_0^1(\Omega)^2$ with
\begin{equation}
\norm{\bv}_{\bV}^2
=\sum_{l=1}^2\sum_{j=1}^d
\left\|\frac{\partial v_l}{\partial x_j}\right\|_{L^2(\Omega)}^2,
\qquad
\norm{\bv}_M^2=M_0(\bv,\bv).
\label{eq:vector-norms}
\end{equation}
Assume that $0<c_{\min}\le c_l^*\le c_{\max}$, that each $\kappa_l^*$ is uniformly elliptic with lower bound $\kappa_0>0$, and that the drift and exchange coefficients are essentially bounded. The mass norm therefore satisfies
\begin{equation}
c_M\norm{\bv}_{L^2(\Omega)^2}^2
\le\norm{\bv}_M^2
\le C_M\norm{\bv}_{L^2(\Omega)^2}^2
\label{eq:mass-equivalence}
\end{equation}
for constants $c_M,C_M>0$.

\begin{lemma}[Continuity and G\aa rding inequality]
\label{lem:garding}
Under the preceding coefficient assumptions, there are constants $C_B,\alpha>0$ and $\beta\ge0$ such that
\begin{align}
\left|B_0(\bu,\bv)\right|
&\le C_B\norm{\bu}_{\bV}\norm{\bv}_{\bV},
\label{eq:bilinear-continuity}\\
B_0(\bv,\bv)
&\ge\alpha\norm{\bv}_{\bV}^2-\beta\norm{\bv}_M^2
\label{eq:garding}
\end{align}
for all $\bu,\bv\in\bV$.
\end{lemma}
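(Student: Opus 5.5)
We treat the three pieces $B_d$, $B_K$, $B_Q$ of $B_0=B_d+B_K+B_Q$ separately. The tools are Cauchy--Schwarz, the Poincar\'e inequality on $H_0^1(\Omega)$, and the mass equivalence \eqref{eq:mass-equivalence}. Throughout, let $\kappa^{\max}$, $K^{\max}$ and $Q^{\max}$ denote the essential bounds of the entries of $\kappa_l^*$, $\bm K_{lm}^*$ and $Q_l^*$. Let $C_P$ be the Poincar\'e constant, so that $\norm{v}_{L^2(\Omega)}\le C_P\norm{\nabla v}_{L^2(\Omega)}$ for $v\in H_0^1(\Omega)$. In particular $\norm{\bv}_{L^2(\Omega)^2}\le C_P\norm{\bv}_{\bV}$.

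\emph{Continuity.} Each term of $B_d$ is bounded by $d\,\kappa^{\max}\norm{\nabla u_l}\norm{\nabla v_l}$. Each drift term of $B_K$ has the form $\int K^*_{lm,j}\partial_j u_m v_l$ and is bounded by $\sqrt d K^{\max}\norm{\nabla u_m}\norm{v_l}\le\sqrt d K^{\max}C_P\norm{\nabla u_m}\norm{\nabla v_l}$. Each exchange term of $B_Q$ is bounded by $Q^{\max}\norm{u_l-u_{\bar l}}\norm{v_l}\le 2Q^{\max}C_P^2\norm{\bu}_{\bV}\norm{\bv}_{\bV}$. Summing these bounds and applying the discrete Cauchy--Schwarz inequality over $l$ gives \eqref{eq:bilinear-continuity}, with $C_B$ depending on $d,\kappa^{\max},K^{\max},Q^{\max},C_P$.

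\emph{G\aa rding inequality.} Uniform ellipticity gives $B_d(\bv,\bv)\ge\kappa_0\norm{\bv}_{\bV}^2$. Note that the $K^*$ are constants, so the diagonal terms $\int K^*_{ll,j}\partial_j v_l\,v_l=\tfrac12\int K^*_{ll,j}\partial_j(v_l^2)$ vanish by the boundary conditions. We will not rely on this, however, since the cross terms must be handled anyway and the general estimate covers both. For the drift part we use Young's inequality with a parameter $\eta>0$:
\begin{equation*}
|B_K(\bv,\bv)|\le 2\sqrt d K^{\max}\norm{\bv}_{\bV}\norm{\bv}_{L^2(\Omega)^2}
\le \eta\norm{\bv}_{\bV}^2+\frac{dK^{\max 2}}{\eta}\norm{\bv}_{L^2(\Omega)^2}^2 .
\end{equation*}
For the exchange part the bound $|B_Q(\bv,\bv)|\le 2Q^{\max}\norm{\bv}_{L^2(\Omega)^2}^2$ suffices. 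No sign condition on $Q_l^*$ is needed, and this matters because $Q_l^*$ is defined through the cell functions $M_l$ and its sign is not controlled a priori. Now choose $\eta=\kappa_0/2$ and convert the $L^2$ norms to mass norms via $\norm{\bv}_{L^2(\Omega)^2}^2\le c_M^{-1}\norm{\bv}_M^2$. This gives \eqref{eq:garding} with
\begin{equation*}
\alpha=\frac{\kappa_0}{2},\qquad
\beta=c_M^{-1}\left(\frac{2dK^{\max 2}}{\kappa_0}+2Q^{\max}\right).
\end{equation*}

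The only step requiring care is the G\aa rding estimate for the non-symmetric cross-gradient terms. These couple $\nabla v_{\bar l}$ with $v_l$, so there is no cancellation to exploit. The Young splitting absorbs the gradient part into the ellipticity and pushes the remainder into $\beta$. Everything else is routine bookkeeping of constants.
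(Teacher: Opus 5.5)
Your proof is correct and follows the paper's argument: ellipticity of $B_d$, Young's inequality on the drift terms with the gradient part absorbed into $\kappa_0$, an $L^2$ lower bound for the exchange part, conversion via \eqref{eq:mass-equivalence}, and H\"older plus Poincar\'e for continuity; the one difference is that you bound $B_Q(\bv,\bv)$ crudely rather than through the splitting \eqref{eq:exchange-decomposition}, which gives a sharper exchange constant (indeed $B_Q(\bv,\bv)\ge0$ when $Q_1^*=Q_2^*\ge0$) but is not needed for the lemma. Your side remark that the sign of $Q_l^*$ is uncontrolled is inaccurate but harmless: testing \eqref{eq:first-exchange-cell} with $M_l^+$ and using $Q_l\ge0$ gives $M_l\le0$, hence $Q_l^*\ge0$, so the paper's sign condition $Q_1^*+Q_2^*\ge0$ in fact always holds.
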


\begin{proof}
The uniformly elliptic diffusion part satisfies
\begin{equation}
B_d(\bv,\bv)
=\sum_{l=1}^2\int_\Omega
\kappa_{l,ij}^*\frac{\partial v_l}{\partial x_j}
\frac{\partial v_l}{\partial x_i}\dd\bm x
\ge\kappa_0\norm{\bv}_{\bV}^2.
\label{eq:diffusion-coercivity}
\end{equation}
For a representative cross-gradient term, H\"older's and Young's inequalities give, for any $\delta>0$,
\begin{align}
\left|\int_\Omega K_{12,j}^*\frac{\partial v_2}{\partial x_j}
v_1\dd\bm x\right|
&\le\norm{\bm K_{12}^*}_{L^\infty}
\left(\sum_{j=1}^d
\left\|\frac{\partial v_2}{\partial x_j}\right\|_{L^2}^2\right)^{1/2}
\norm{v_1}_{L^2}
\notag\\
&\le\delta\sum_{j=1}^d
\left\|\frac{\partial v_2}{\partial x_j}\right\|_{L^2}^2
+C_\delta\norm{v_1}_{L^2}^2.
\label{eq:representative-drift-bound}
\end{align}
Applying the same estimate to all four drift terms yields
\begin{equation}
B_K(\bv,\bv)
\ge-\delta\norm{\bv}_{\bV}^2
-C_\delta\norm{\bv}_{L^2(\Omega)^2}^2,
\label{eq:drift-garding-bound}
\end{equation}
after changing $\delta$ by a fixed multiplicative constant.

For the exchange contribution, the pointwise identity
\begin{align}
&Q_1^*(v_1-v_2)v_1+Q_2^*(v_2-v_1)v_2
\notag\\
&\quad=
\frac{Q_1^*+Q_2^*}{2}(v_1-v_2)^2
+\frac{Q_1^*-Q_2^*}{2}(v_1^2-v_2^2)
\label{eq:exchange-decomposition}
\end{align}
shows that the first term is nonnegative whenever $Q_1^*+Q_2^*\ge0$, whereas boundedness of $Q_1^*-Q_2^*$ implies
\begin{equation}
B_Q(\bv,\bv)\ge-C_Q\norm{\bv}_{L^2(\Omega)^2}^2.
\label{eq:exchange-lower-bound}
\end{equation}
If no sign condition is imposed on $Q_1^*+Q_2^*$, its bounded negative part is estimated in the same way and merely enlarges $C_Q$.
Combining \eqref{eq:diffusion-coercivity}, \eqref{eq:drift-garding-bound}, and \eqref{eq:exchange-lower-bound}, choosing $\delta<\kappa_0$, and using \eqref{eq:mass-equivalence} proves \eqref{eq:garding}. Continuity follows from the same coefficient bounds, H\"older's inequality, and the Poincar\'e inequality on $H_0^1(\Omega)$.
\end{proof}

\subsection{Vector POD projection identities}
\label{subsec:pod-identities}

Let $P_r:\bV_h\to\bV_r$ be the $M_0$-orthogonal projection. Define
\begin{equation}
\bm\eta^n=\bU_h^n-P_r\bU_h^n,
\qquad
\bm\theta^n=P_r\bU_h^n-\bU_r^n,
\qquad
\mathbf e^n=\bm\eta^n+\bm\theta^n.
\label{eq:error-split}
\end{equation}
Here $\bm\eta^n$ is the POD projection error and $\bm\theta^n$ is the reduced evolution error.

For the complete snapshot space, write
\begin{equation}
\bU_h^n=\sum_{k=1}^s c_k^n\bm\phi_k,
\qquad
c_k^n=M_0(\bU_h^n,\bm\phi_k).
\label{eq:snapshot-mode-expansion}
\end{equation}
The correlation eigenproblem in \eqref{eq:pod-correlation} and the normalization in \eqref{eq:vector-pod-modes} imply
\begin{equation}
\frac1m\sum_{n=1}^m c_i^nc_j^n
=\lambda_i\delta_{ij}.
\label{eq:pod-coefficient-covariance}
\end{equation}
Indeed, if the Euclidean-normalized correlation eigenvector is $\bm\xi^k$, then $c_k^n=\sqrt{m\lambda_k}\,\xi_n^k$, and \eqref{eq:pod-coefficient-covariance} follows from $(\bm\xi^i)^T\bm\xi^j=\delta_{ij}$.

\begin{lemma}[Vector POD projection identities]
\label{lem:pod-identities}
The joint POD projection error satisfies
\begin{align}
\frac1m\sum_{n=1}^m\norm{\bm\eta^n}_M^2
&=\sum_{k=r+1}^s\lambda_k,
\label{eq:pod-tail-mass}\\
\frac1m\sum_{n=1}^m\norm{\bm\eta^n}_{\bV}^2
&=\sum_{k=r+1}^s\lambda_k\norm{\bm\phi_k}_{\bV}^2.
\label{eq:pod-tail-energy}
\end{align}
\end{lemma}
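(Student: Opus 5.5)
The plan is to reduce both identities to the modal expansion \eqref{eq:snapshot-mode-expansion} together with the coefficient covariance relation \eqref{eq:pod-coefficient-covariance}. First, I will verify that the modes \eqref{eq:vector-pod-modes} are $M_0$-orthonormal. Writing $\mathcal C=mC$ for the unscaled Gram matrix, we have
\[
M_0(\bm\phi_i,\bm\phi_j)=\frac{(\bm\xi^i)^T\mathcal C\,\bm\xi^j}{m\sqrt{\lambda_i\lambda_j}}=\delta_{ij},
\]
because $C\bm\xi^j=\lambda_j\bm\xi^j$ and $(\bm\xi^i)^T\bm\xi^j=\delta_{ij}$.

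Next, I will justify that every snapshot lies in $\operatorname{span}\{\bm\phi_1,\ldots,\bm\phi_s\}$, so that \eqref{eq:snapshot-mode-expansion} holds exactly. Set $\mathbf w^n=\bU_h^n-\sum_{k\le s}c_k^n\bm\phi_k$. By orthonormality,
\[
\norm{\mathbf w^n}_M^2=\norm{\bU_h^n}_M^2-\sum_{k\le s}(c_k^n)^2 .
\]
With $c_k^n=\sqrt{m\lambda_k}\,\xi_n^k$, we get $\sum_{k\le s}(c_k^n)^2=m\sum_{k}\lambda_k(\xi_n^k)^2=mC_{nn}=\norm{\bU_h^n}_M^2$. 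This uses the spectral decomposition of $C$, whose zero eigenvalues contribute nothing. Hence $\mathbf w^n=0$, since $\norm{\cdot}_M$ is a norm by \eqref{eq:mass-equivalence}. Because $P_r$ is the $M_0$-orthogonal projection onto $\bV_r$ and the modes are orthonormal, $P_r\bU_h^n=\sum_{k\le r}c_k^n\bm\phi_k$, and therefore
\[
\bm\eta^n=\sum_{k=r+1}^s c_k^n\bm\phi_k .
\]

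For \eqref{eq:pod-tail-mass}, orthonormality gives $\norm{\bm\eta^n}_M^2=\sum_{k>r}(c_k^n)^2$. Averaging over $n$ and using \eqref{eq:pod-coefficient-covariance} with $i=j$ yields $\sum_{k>r}\lambda_k$.

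For \eqref{eq:pod-tail-energy}, the modes are not $\bV$-orthogonal, so I will expand the square:
\[
\norm{\bm\eta^n}_{\bV}^2=\sum_{i,j=r+1}^s c_i^nc_j^n(\bm\phi_i,\bm\phi_j)_{\bV},
\]
where $(\cdot,\cdot)_{\bV}$ is the inner product inducing \eqref{eq:vector-norms}. Averaging over $n$ before summing, \eqref{eq:pod-coefficient-covariance} turns $\frac1m\sum_n c_i^nc_j^n$ into $\lambda_i\delta_{ij}$. All cross terms therefore vanish, leaving $\sum_{k>r}\lambda_k\norm{\bm\phi_k}_{\bV}^2$. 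The key point is that the decorrelation happens in time, not in space.

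The main obstacle is the exactness of the expansion \eqref{eq:snapshot-mode-expansion} when $C$ is singular, which is handled by the norm computation above. A second, lesser point is that the snapshot index set $n=1,\ldots,m$ in the averages must coincide with the one used to build $C$; I will state this explicitly.
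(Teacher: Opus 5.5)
Your proposal is correct and follows the paper's proof. You expand $\bm\eta^n$ in the discarded modes, use $M_0$-orthonormality for the mass identity, and let the averaged coefficient covariance \eqref{eq:pod-coefficient-covariance} eliminate the cross terms in the $\bV$ expansion. Your explicit checks of $M_0$-orthonormality and of the exactness of \eqref{eq:snapshot-mode-expansion} (via Bessel's identity and the spectral decomposition of $C$) fill in steps the paper takes for granted when it writes the complete snapshot-space expansion.
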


\begin{proof}
Because
\begin{equation}
P_r\bU_h^n=\sum_{k=1}^r c_k^n\bm\phi_k,
\qquad
\bm\eta^n=\sum_{k=r+1}^s c_k^n\bm\phi_k,
\label{eq:projection-error-expansion}
\end{equation}
the $M_0$-orthonormality of the modes gives
\begin{equation}
\norm{\bm\eta^n}_M^2
=\sum_{k=r+1}^s(c_k^n)^2.
\label{eq:projection-mass-single-time}
\end{equation}
Averaging \eqref{eq:projection-mass-single-time} over the snapshots and using \eqref{eq:pod-coefficient-covariance} proves \eqref{eq:pod-tail-mass}.

The POD modes need not be orthogonal in the $\bV$ inner product. At a single time,
\begin{equation}
\norm{\bm\eta^n}_{\bV}^2
=\sum_{k,j=r+1}^s c_k^nc_j^n
(\bm\phi_k,\bm\phi_j)_{\bV}.
\label{eq:projection-energy-single-time}
\end{equation}
After averaging, \eqref{eq:pod-coefficient-covariance} eliminates every cross term:
\begin{align}
\frac1m\sum_{n=1}^m\norm{\bm\eta^n}_{\bV}^2
&=\sum_{k,j=r+1}^s
\left(\frac1m\sum_{n=1}^m c_k^nc_j^n\right)
(\bm\phi_k,\bm\phi_j)_{\bV}
\notag\\
&=\sum_{k=r+1}^s\lambda_k\norm{\bm\phi_k}_{\bV}^2.
\label{eq:projection-energy-average}
\end{align}
This proves \eqref{eq:pod-tail-energy}. The disappearance of the cross terms is caused by diagonalization of the snapshot coefficient covariance, not by $\bV$-orthogonality of the modes.
\end{proof}

Define
\begin{equation}
\mathcal P_r
=\max_{1\le n\le m}\norm{\bm\eta^n}_M^2
+\dt\sum_{n=1}^m\norm{\bm\eta^n}_{\bV}^2.
\label{eq:Pr-definition}
\end{equation}
If the theoretical snapshot ensemble contains one snapshot at every analyzed time level and $m\dt=T$, then Lemma~\ref{lem:pod-identities} gives
\begin{equation}
\mathcal P_r
\le m\sum_{k=r+1}^s\lambda_k
+T\sum_{k=r+1}^s\lambda_k\norm{\bm\phi_k}_{\bV}^2.
\label{eq:Pr-tail-bound}
\end{equation}
The maximum is bounded by the sum of the nonnegative mass errors, and the second term follows directly from \eqref{eq:pod-tail-energy}. If snapshots are temporally subsampled, the definition \eqref{eq:Pr-definition} remains valid, while \eqref{eq:Pr-tail-bound} must be replaced by the corresponding weighted-snapshot or temporal-interpolation estimate.

\subsection{Error of the POD-reduced homogenized system}
\label{subsec:fom-rom-error}

\begin{theorem}[Error estimate for the POD-reduced homogenized system]
\label{thm:fom-rom}
Let $\bU_h^n$ solve the fully discrete homogenized problem \eqref{eq:fom-weak}, let $\bU_r^n$ solve the POD-reduced problem \eqref{eq:rom-system}, and choose $\bU_r^0=P_r\bU_h^0$. If $\beta>0$, assume that $2\beta\dt\le\rho<1$ for a fixed $\rho$. Then
\begin{equation}
\max_{1\le n\le m}\norm{\bU_h^n-\bU_r^n}_M^2
+\dt\sum_{n=1}^m\norm{\bU_h^n-\bU_r^n}_{\bV}^2
\le C_T\mathcal P_r,
\label{eq:fom-rom-bound}
\end{equation}
where $C_T$ is independent of $h$, $r$, and the discarded eigenvalues, and is uniform in $\dt$ under the stated step-size margin.
\end{theorem}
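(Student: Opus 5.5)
The plan is the standard POD--Galerkin argument built on the splitting \eqref{eq:error-split}, $\mathbf e^n=\bm\eta^n+\bm\theta^n$. The projection part $\bm\eta^n$ already appears in $\mathcal P_r$ from \eqref{eq:Pr-definition}, so the task reduces to bounding the reduced evolution error $\bm\theta^n\in\bV_r$ by $\mathcal P_r$ through an energy estimate and a discrete Gronwall argument.

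\emph{Step 1 (error equation).} Since $\bV_r\subset\bV_h$, I restrict \eqref{eq:fom-weak} to test functions $\bv_r\in\bV_r$ and subtract the reduced problem \eqref{eq:rom-system}, written in variational form. This gives, for all $\bv_r\in\bV_r$,
\begin{equation*}
M_0\!\left(\frac{\bm\theta^n-\bm\theta^{n-1}}{\dt},\bv_r\right)+B_0(\bm\theta^n,\bv_r)
=-M_0\!\left(\frac{\bm\eta^n-\bm\eta^{n-1}}{\dt},\bv_r\right)-B_0(\bm\eta^n,\bv_r).
\end{equation*}
The key simplification is that $P_r$ is the $M_0$-orthogonal projection. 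Hence $M_0(\bm\eta^n,\bv_r)=0$ for every $n$, and the discrete time-derivative term of $\bm\eta$ drops out. Only $-B_0(\bm\eta^n,\bv_r)$ remains on the right-hand side. This is why no $\bV$- or time-derivative norm of $\bm\eta$ beyond $\mathcal P_r$ will be needed.

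\emph{Step 2 (energy estimate).} I take $\bv_r=\bm\theta^n$ and use the identity $2M_0(\mathbf a-\mathbf b,\mathbf a)=\norm{\mathbf a}_M^2-\norm{\mathbf b}_M^2+\norm{\mathbf a-\mathbf b}_M^2$. On the left I apply the G\aa rding inequality \eqref{eq:garding} of Lemma~\ref{lem:garding}. On the right I apply the continuity bound \eqref{eq:bilinear-continuity} and Young's inequality:
\begin{equation*}
|B_0(\bm\eta^n,\bm\theta^n)|\le\tfrac{\alpha}{2}\norm{\bm\theta^n}_{\bV}^2+\tfrac{C_B^2}{2\alpha}\norm{\bm\eta^n}_{\bV}^2 .
\end{equation*}
After multiplying by $2\dt$ and dropping the nonnegative increment term, this yields
\begin{equation*}
(1-2\beta\dt)\norm{\bm\theta^n}_M^2+\alpha\dt\norm{\bm\theta^n}_{\bV}^2\le\norm{\bm\theta^{n-1}}_M^2+\frac{C_B^2}{\alpha}\dt\norm{\bm\eta^n}_{\bV}^2 .
\end{equation*}

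\emph{Step 3 (Gronwall and assembly).} The choice $\bU_r^0=P_r\bU_h^0$ gives $\bm\theta^0=0$. I sum the inequality of Step 2 from $1$ to $n$ and apply the implicit discrete Gronwall lemma, using $2\beta\dt\le\rho<1$. Then $(1-2\beta\dt)^{-n}\le\exp(2\beta T/(1-\rho))$, which gives
\begin{equation*}
\max_n\norm{\bm\theta^n}_M^2+\dt\sum_n\norm{\bm\theta^n}_{\bV}^2\le C\,\dt\sum_n\norm{\bm\eta^n}_{\bV}^2 ,
\end{equation*}
with $C$ depending only on $\alpha,\beta,C_B,T,\rho$. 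If $\beta=0$, no Gronwall step is needed and the bound follows from direct summation. Finally, the triangle inequality $\norm{\mathbf e^n}^2\le2\norm{\bm\eta^n}^2+2\norm{\bm\theta^n}^2$ in both norms, together with \eqref{eq:Pr-definition}, gives \eqref{eq:fom-rom-bound}.

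The main obstacle I expect is the Gronwall step in Step 3. The G\aa rding term must be absorbed into the $\norm{\cdot}_M$ norm on the left at the new time level, not the old one. This is what forces the step-size margin $2\beta\dt\le\rho$, and the constant must be tracked so that it stays uniform in $\dt$ and independent of $h$ and $r$. I would handle this by writing the recursion as $a_n\le(1-2\beta\dt)^{-1}(a_{n-1}+b_n)$, iterating it, and bounding the product by $e^{2\beta T/(1-\rho)}$, using $(1-x)^{-1}\le e^{x/(1-\rho)}$ for $x\le\rho$. The $\dt\sum\norm{\bm\theta^n}_{\bV}^2$ part is then recovered by re-summing the inequality of Step 2 once the $\max$ bound is known.
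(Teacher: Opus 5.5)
Your proposal is correct and follows the paper's proof essentially step for step. It uses the same $M_0$-orthogonality cancellation of the $\bm\eta$ time difference, the same test with $\bm\theta^n$ and polarization identity, the same G\aa rding/Young one-step estimate, a discrete Gronwall argument under $2\beta\dt\le\rho<1$, re-summation for the $\bV$ part, and the squared triangle inequality. The only cosmetic difference is that you iterate the recursion $a_n\le(1-2\beta\dt)^{-1}(a_{n-1}+b_n)$ directly, whereas the paper first sums, isolates the last term, and applies Gronwall with $\gamma=2\beta/(1-2\beta\dt)$; both give a constant uniform in $\dt$.
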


\begin{proof}
Because $\bV_r\subset\bV_h$, the fully discrete homogenized equation \eqref{eq:fom-weak} may be tested by any $\bv_r\in\bV_r$. Subtracting the reduced equation gives
\begin{equation}
M_0\!\left(
\frac{(\bU_h^n-\bU_r^n)-(\bU_h^{n-1}-\bU_r^{n-1})}{\dt},\bv_r\right)
+B_0(\bU_h^n-\bU_r^n,\bv_r)=0.
\label{eq:fom-rom-orthogonality}
\end{equation}
Using \eqref{eq:error-split} in \eqref{eq:fom-rom-orthogonality} produces four projection and evolution terms. Since $P_r$ is the time-independent $M_0$ projection,
\begin{align}
M_0\!\left(\frac{\bm\eta^n-\bm\eta^{n-1}}{\dt},\bv_r\right)
&=\frac1{\dt}\left[
M_0(\bm\eta^n,\bv_r)-M_0(\bm\eta^{n-1},\bv_r)
\right]
=0.
\label{eq:eta-difference-orthogonality}
\end{align}
Consequently,
\begin{equation}
M_0\!\left(\frac{\bm\theta^n-\bm\theta^{n-1}}{\dt},\bv_r\right)
+B_0(\bm\theta^n,\bv_r)
=-B_0(\bm\eta^n,\bv_r),
\qquad\forall\bv_r\in\bV_r.
\label{eq:theta-equation}
\end{equation}

Taking $\bv_r=\bm\theta^n$, multiplying by $2\dt$, and using the exact polarization identity gives
\begin{align}
&\norm{\bm\theta^n}_M^2-\norm{\bm\theta^{n-1}}_M^2
+\norm{\bm\theta^n-\bm\theta^{n-1}}_M^2
+2\dt B_0(\bm\theta^n,\bm\theta^n)
\notag\\
&\qquad=-2\dt B_0(\bm\eta^n,\bm\theta^n).
\label{eq:theta-polarized-identity}
\end{align}
Lemma~\ref{lem:garding} and Young's inequality imply
\begin{equation}
2\left|B_0(\bm\eta^n,\bm\theta^n)\right|
\le2C_B\norm{\bm\eta^n}_{\bV}\norm{\bm\theta^n}_{\bV}
\le\alpha\norm{\bm\theta^n}_{\bV}^2
+\frac{C_B^2}{\alpha}\norm{\bm\eta^n}_{\bV}^2.
\label{eq:theta-young-bound}
\end{equation}
Substitution into \eqref{eq:theta-polarized-identity} yields the one-step estimate
\begin{align}
&\norm{\bm\theta^n}_M^2-\norm{\bm\theta^{n-1}}_M^2
+\norm{\bm\theta^n-\bm\theta^{n-1}}_M^2
+\alpha\dt\norm{\bm\theta^n}_{\bV}^2
\notag\\
&\qquad\le2\beta\dt\norm{\bm\theta^n}_M^2
+\frac{C_B^2}{\alpha}\dt\norm{\bm\eta^n}_{\bV}^2.
\label{eq:theta-one-step}
\end{align}

The initial condition gives $\bm\theta^0=0$. Summing \eqref{eq:theta-one-step} from $n=1$ to $j$ and discarding the nonnegative difference term gives
\begin{align}
\norm{\bm\theta^j}_M^2
+\alpha\dt\sum_{n=1}^j\norm{\bm\theta^n}_{\bV}^2
&\le2\beta\dt\sum_{n=1}^j\norm{\bm\theta^n}_M^2
+\frac{C_B^2}{\alpha}\dt\sum_{n=1}^j\norm{\bm\eta^n}_{\bV}^2.
\label{eq:theta-summed}
\end{align}
Separating the last term of the first sum on the right gives
\begin{align}
&(1-2\beta\dt)\norm{\bm\theta^j}_M^2
+\alpha\dt\sum_{n=1}^j\norm{\bm\theta^n}_{\bV}^2
\notag\\
&\qquad\le2\beta\dt\sum_{n=1}^{j-1}\norm{\bm\theta^n}_M^2
+\frac{C_B^2}{\alpha}\dt\sum_{n=1}^j\norm{\bm\eta^n}_{\bV}^2.
\label{eq:theta-last-term-separated}
\end{align}
For $\beta>0$, set
\begin{equation}
\gamma=\frac{2\beta}{1-2\beta\dt},
\qquad
C_1=\frac{C_B^2}{\alpha(1-2\beta\dt)}.
\label{eq:gronwall-constants}
\end{equation}
The step-size margin makes these constants uniformly bounded. Dropping the nonnegative $\bV$ term in \eqref{eq:theta-last-term-separated} gives
\begin{equation}
\norm{\bm\theta^j}_M^2
\le\gamma\dt\sum_{n=1}^{j-1}\norm{\bm\theta^n}_M^2
+C_1\dt\sum_{n=1}^j\norm{\bm\eta^n}_{\bV}^2.
\label{eq:theta-gronwall-form}
\end{equation}
The discrete Gronwall inequality, applied to the nondecreasing last term, gives
\begin{equation}
\max_{1\le j\le m}\norm{\bm\theta^j}_M^2
\le C_T\dt\sum_{n=1}^m\norm{\bm\eta^n}_{\bV}^2,
\qquad
C_T\le C_1\exp(\gamma T).
\label{eq:theta-mass-bound}
\end{equation}

Returning to \eqref{eq:theta-summed} with $j=m$ and using
\begin{equation}
\dt\sum_{n=1}^m\norm{\bm\theta^n}_M^2
\le T\max_{1\le n\le m}\norm{\bm\theta^n}_M^2
\label{eq:theta-time-sum-mass}
\end{equation}
together with \eqref{eq:theta-mass-bound}, we obtain
\begin{equation}
\dt\sum_{n=1}^m\norm{\bm\theta^n}_{\bV}^2
\le C_T\dt\sum_{n=1}^m\norm{\bm\eta^n}_{\bV}^2.
\label{eq:theta-energy-bound}
\end{equation}
Finally, $\mathbf e^n=\bm\eta^n+\bm\theta^n$ and the squared triangle inequality give
\begin{align}
&\max_n\norm{\mathbf e^n}_M^2
+\dt\sum_{n=1}^m\norm{\mathbf e^n}_{\bV}^2
\notag\\
&\quad\le
2\max_n\norm{\bm\eta^n}_M^2
+2\dt\sum_{n=1}^m\norm{\bm\eta^n}_{\bV}^2
+2\max_n\norm{\bm\theta^n}_M^2
+2\dt\sum_{n=1}^m\norm{\bm\theta^n}_{\bV}^2
\notag\\
&\quad\le C_T\mathcal P_r,
\label{eq:total-homogenized-error}
\end{align}
which proves \eqref{eq:fom-rom-bound}. When $\beta=0$, \eqref{eq:theta-one-step} may be summed directly, so neither Gronwall nor a time-step restriction is needed.
\end{proof}

\subsection{Stability of the multi-continuum recovery}
\label{subsec:recovery-stability}

For a discrete trajectory $\{\bv_h^n\}_{n=0}^m$, define
\begin{align}
(\mathcal R_{\eps,h}^{(2)}\bv_h^n)_l
={}&v_{l,h}^n
+\eps N_l^\alpha\frac{\partial v_{l,h}^n}{\partial x_\alpha}
+\eps M_l(v_{\bar l,h}^n-v_{l,h}^n)
\notag\\
&+\eps^2N_l^{\alpha\beta}
\frac{\partial^2 v_{l,h}^n}{\partial x_\alpha\partial x_\beta}
+\eps^2G_l\frac{v_{l,h}^n-v_{l,h}^{n-1}}{\dt}
+\eps^2C_l^\alpha\frac{\partial v_{l,h}^n}{\partial x_\alpha}
\notag\\
&+\eps^2F_l^\alpha\frac{\partial v_{\bar l,h}^n}{\partial x_\alpha}
+\eps^2K_l(v_{\bar l,h}^n-v_{l,h}^n),
\qquad l=1,2,
\label{eq:recovery-operator}
\end{align}
where every cell function is evaluated at $\bm x/\eps$. The second derivatives displayed below are evaluated directly when $P_2$ finite elements are used. For a $P_1$ discretization, the first derivatives are first interpolated and the resulting fields are then differentiated once more. Thus, the displayed expressions denote the numerically evaluated second derivatives required by the second-order recovery. Define
\begin{align}
\norm{\bv_h^n}_{\mathcal R,\eps}^2
={}&\norm{\bv_h^n}_M^2
+\eps^2\norm{\bv_h^n}_{\bV}^2
+\eps^2\norm{v_{1,h}^n-v_{2,h}^n}_{L^2(\Omega)}^2
\notag\\
&+\eps^4\sum_{l=1}^2\sum_{\alpha,\beta=1}^d
\left\|\frac{\partial^2 v_{l,h}^n}
{\partial x_\alpha\partial x_\beta}\right\|_{L^2(\Omega)}^2
+\eps^4\left\|\frac{\bv_h^n-\bv_h^{n-1}}{\dt}\right\|_M^2.
\label{eq:recovery-norm}
\end{align}

\begin{proposition}[Recovery stability]
\label{prop:recovery-stability}
Assume $0<\eps\le1$ and that all cell functions in \eqref{eq:recovery-operator} are bounded in $L^\infty(Y)$ by a common constant $C_{\rm cell}$. Then
\begin{equation}
\norm{\mathcal R_{\eps,h}^{(2)}\bv_h^n}_{L^2(\Omega)^2}
\le C_R\norm{\bv_h^n}_{\mathcal R,\eps},
\label{eq:recovery-stability}
\end{equation}
where $C_R$ depends on $C_{\rm cell}$, the dimension, and the mass-norm equivalence constants, but is independent of $h$, $\dt$, and the trajectory.
\end{proposition}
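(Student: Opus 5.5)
The plan is to bound each term of \eqref{eq:recovery-operator} separately in $L^2(\Omega)$ and then combine them with the triangle inequality and the elementary estimate $(\sum_{i=1}^p a_i)^2\le p\sum_{i=1}^p a_i^2$. The key observation is that every cell function enters only as a multiplicative coefficient $w(\bm x/\eps)$. Since $\|w\|_{L^\infty(Y)}\le C_{\rm cell}$ and $w$ is $Y$-periodic (or extended periodically), the composed function satisfies $\|w(\cdot/\eps)\|_{L^\infty(\Omega)}\le C_{\rm cell}$. Therefore, for any $g\in L^2(\Omega)$,
\begin{equation*}
\|w(\cdot/\eps)\,g\|_{L^2(\Omega)}\le C_{\rm cell}\|g\|_{L^2(\Omega)}.
\end{equation*}
No derivative ever falls on a cell function, so only $L^\infty$ bounds are needed and no $\eps^{-1}$ factors arise.

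The terms are then matched one by one with the components of $\norm{\cdot}_{\mathcal R,\eps}$ in \eqref{eq:recovery-norm}.
\begin{itemize}
\item The zeroth-order term satisfies $\|v_{l,h}^n\|_{L^2}\le c_M^{-1/2}\norm{\bv_h^n}_M$ by \eqref{eq:mass-equivalence}.
\item The gradient term is bounded by $\eps C_{\rm cell}\sum_\alpha\|\partial_{x_\alpha}v_{l,h}^n\|_{L^2}$, which is at most $\sqrt d\,C_{\rm cell}\,\eps\norm{\bv_h^n}_{\bV}$.
\item The first-order exchange term is bounded by $\eps C_{\rm cell}\|v_{1,h}^n-v_{2,h}^n\|_{L^2}$, which is exactly a term in the norm.
\item The second-derivative term is bounded by $\eps^2 C_{\rm cell}\sum_{\alpha,\beta}\|\partial^2_{\alpha\beta}v_{l,h}^n\|_{L^2}$. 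This is handled by Cauchy--Schwarz over the $d^2$ indices and matches the $\eps^4$ Hessian term.
\item The time-derivative term is bounded by $\eps^2C_{\rm cell}c_M^{-1/2}\|(\bv_h^n-\bv_h^{n-1})/\dt\|_M$.
\end{itemize}

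The remaining three $\eps^2$ terms are reduced to terms already controlled by using $\eps\le1$. The terms with $C_l^\alpha$ and $F_l^\alpha$ are bounded by $\eps^2\sqrt d\,C_{\rm cell}\norm{\bv_h^n}_{\bV}$, and $\eps^2\le\eps$ places them inside the $\eps^2\norm{\cdot}_{\bV}^2$ component. Note that $F_l^\alpha$ acts on the gradient of the other continuum, and this is still controlled because $\norm{\cdot}_{\bV}$ sums over both components. The $K_l$ term is absorbed in the same way into the $\eps^2\|v_1-v_2\|^2$ component.

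Summing the bounds over $l=1,2$ and squaring gives \eqref{eq:recovery-stability}. The constant is $C_R^2=9\,C_{\rm cell}^2\max(1,c_M^{-1},d,d^2)$ up to harmless factors, with the extra term $c_M^{-1}$ coming from the leading term that carries no cell coefficient. This constant is independent of $h$, $\dt$, and the trajectory.

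The only delicate point concerns the second-derivative term for $P_1$ elements, where $\partial^2 v_{l,h}^n$ denotes the numerically recovered derivative (interpolated gradient, then differentiated again). I would argue that the norm \eqref{eq:recovery-norm} is defined with this same numerical operator. The estimate is then a purely algebraic $L^\infty$–$L^2$ bound and requires no inverse inequality, which is exactly what keeps $C_R$ free of $h$.

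A second point to check is that $L^\infty$ boundedness of the cell functions holds on $Y$ with the homogeneous Dirichlet closure. This is part of the hypothesis, so it does not need to be proved. The periodic extension then makes the composition with $\bm x/\eps$ well defined on all of $\Omega$.
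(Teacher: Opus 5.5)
Your proof is correct and follows the paper's argument: termwise $L^\infty$--$L^2$ bounds for each corrector, the finite-sum inequality $(\sum_i a_i)^2\le C\sum_i a_i^2$, absorption of the higher-order $C_l^\alpha$, $F_l^\alpha$, $K_l$ contributions into the gradient and exchange components using $\eps\le1$, and the mass equivalence \eqref{eq:mass-equivalence}, applied also to the difference quotient. One small slip is in your explicit constant: the leading term $v_{l,h}^n$ carries no cell coefficient, so the prefactor should be $\max(1,C_{\rm cell}^2)$ rather than $C_{\rm cell}^2$, which does not affect the claimed dependence of $C_R$.
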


\begin{proof}
For each $l$, the principal first-order terms satisfy
\begin{align}
\norm{\eps N_l^\alpha\frac{\partial v_{l,h}^n}{\partial x_\alpha}}_{L^2}
&\le C\eps\left(\sum_{j=1}^d
\left\|\frac{\partial v_{l,h}^n}{\partial x_j}\right\|_{L^2}^2\right)^{1/2},
\notag\\
\norm{\eps M_l(v_{\bar l,h}^n-v_{l,h}^n)}_{L^2}
&\le C\eps\norm{v_{\bar l,h}^n-v_{l,h}^n}_{L^2}.
\label{eq:recovery-first-order-bounds}
\end{align}
The second-order terms satisfy
\begin{align}
\left\|\eps^2N_l^{\alpha\beta}
\frac{\partial^2 v_{l,h}^n}{\partial x_\alpha\partial x_\beta}\right\|_{L^2}
&\le C\eps^2\left(\sum_{\alpha,\beta=1}^d
\left\|\frac{\partial^2 v_{l,h}^n}
{\partial x_\alpha\partial x_\beta}\right\|_{L^2}^2\right)^{1/2},
\notag\\
\left\|\eps^2G_l\frac{v_{l,h}^n-v_{l,h}^{n-1}}{\dt}\right\|_{L^2}
&\le C\eps^2\left\|\frac{v_{l,h}^n-v_{l,h}^{n-1}}{\dt}\right\|_{L^2},
\notag\\
\norm{\eps^2C_l^\alpha\frac{\partial v_{l,h}^n}{\partial x_\alpha}}_{L^2}
+\norm{\eps^2F_l^\alpha\frac{\partial v_{\bar l,h}^n}{\partial x_\alpha}}_{L^2}
&\le C\eps^2\norm{\bv_h^n}_{\bV},
\notag\\
\norm{\eps^2K_l(v_{\bar l,h}^n-v_{l,h}^n)}_{L^2}
&\le C\eps^2\norm{v_{\bar l,h}^n-v_{l,h}^n}_{L^2}.
\label{eq:recovery-second-order-bounds}
\end{align}
For a fixed finite number of terms, $(\sum_i a_i)^2\le C\sum_i a_i^2$. Summing the squared componentwise bounds gives
\begin{align}
\norm{\mathcal R_{\eps,h}^{(2)}\bv_h^n}_{L^2(\Omega)^2}^2
\le C\big[
&\norm{\bv_h^n}_{L^2(\Omega)^2}^2
+\eps^2\norm{\bv_h^n}_{\bV}^2
+\eps^2\norm{v_{1,h}^n-v_{2,h}^n}_{L^2}^2
\notag\\
&+\eps^4\sum_{l=1}^2\sum_{\alpha,\beta=1}^d
\left\|\frac{\partial^2 v_{l,h}^n}
{\partial x_\alpha\partial x_\beta}\right\|_{L^2}^2
+\eps^4\left\|\frac{\bv_h^n-\bv_h^{n-1}}{\dt}\right\|_{L^2(\Omega)^2}^2
\notag\\
&+\eps^4\norm{\bv_h^n}_{\bV}^2
+\eps^4\norm{v_{1,h}^n-v_{2,h}^n}_{L^2}^2
\big].
\label{eq:recovery-squared-bound}
\end{align}
Because $0<\eps\le1$, the last two terms are absorbed by their $\eps^2$ counterparts. The mass equivalence \eqref{eq:mass-equivalence}, applied also to $(\bv_h^n-\bv_h^{n-1})/\dt$, converts the remaining $L^2$ terms into mass norms and proves \eqref{eq:recovery-stability}.
\end{proof}

It remains to control the recovery norm on the retained space. Write
\begin{equation}
\bm\theta^n=\sum_{k=1}^r a_k^n\bm\phi_k.
\label{eq:theta-reduced-expansion}
\end{equation}
Define the reduced Gram matrices
\begin{align}
(\mathbf S_r)_{ij}
&=(\bm\phi_j,\bm\phi_i)_{\bV},
\notag\\
(\mathbf E_r)_{ij}
&=(\phi_{1,j}-\phi_{2,j},\phi_{1,i}-\phi_{2,i})_{L^2(\Omega)},
\notag\\
(\mathbf S_r^{(2)})_{ij}
&=\sum_{l=1}^2\sum_{\alpha,\beta=1}^d
\left(\frac{\partial^2\phi_{l,j}}{\partial x_\alpha\partial x_\beta},
\frac{\partial^2\phi_{l,i}}{\partial x_\alpha\partial x_\beta}\right)_{L^2(\Omega)}.
\label{eq:recovery-gram-matrices}
\end{align}

\begin{lemma}[Retained-space recovery bound]
\label{lem:retained-recovery}
For the $M_0$-orthonormal retained modes,
\begin{equation}
\max_{1\le n\le m}\norm{\bm\theta^n}_{\mathcal R,\eps}^2
\le C_{r,\eps}
\left[
\max_{0\le n\le m}\norm{\bm\theta^n}_M^2
+\dt\sum_{n=1}^m\norm{\bm\theta^n}_{\bV}^2
\right],
\label{eq:retained-recovery-stability}
\end{equation}
where a conservative computable choice is
\begin{align}
C_{r,\eps}
={}&1+\eps^2\lambda_{\max}(\mathbf S_r)
+\eps^2\lambda_{\max}(\mathbf E_r)
+\eps^4\lambda_{\max}(\mathbf S_r^{(2)})
+\frac{4\eps^4}{\dt^2}.
\label{eq:retained-recovery-constant}
\end{align}
The notation suppresses the explicit $\dt$ dependence displayed in \eqref{eq:retained-recovery-constant}.
\end{lemma}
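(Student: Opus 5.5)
The plan is to reduce every term of the recovery norm \eqref{eq:recovery-norm} to a Rayleigh quotient in the coefficient vector $\mathbf a^n=(a_1^n,\ldots,a_r^n)^T$ of the expansion \eqref{eq:theta-reduced-expansion}. Since $\bm\theta^n=P_r\bU_h^n-\bU_r^n\in\bV_r$, this expansion exists for every $n$, including $n=0$. The $M_0$-orthonormality of the retained modes $\bm\phi_1,\ldots,\bm\phi_r$ gives the exact identity
\begin{equation*}
\norm{\bm\theta^n}_M^2=|\mathbf a^n|^2 .
\end{equation*}
This identity is the bridge between the quadratic forms below and the mass norm.

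First, I treat the three spatial terms. By bilinearity and the definitions \eqref{eq:recovery-gram-matrices},
\begin{equation*}
\norm{\bm\theta^n}_{\bV}^2=(\mathbf a^n)^T\mathbf S_r\mathbf a^n,
\qquad
\norm{\theta_1^n-\theta_2^n}_{L^2}^2=(\mathbf a^n)^T\mathbf E_r\mathbf a^n,
\end{equation*}
and the summed squared second derivatives equal $(\mathbf a^n)^T\mathbf S_r^{(2)}\mathbf a^n$. Each of these matrices is a Gram matrix, hence symmetric positive semidefinite. The Rayleigh bound $\mathbf a^T\mathbf G\mathbf a\le\lambda_{\max}(\mathbf G)|\mathbf a|^2$ then yields
\begin{equation*}
\eps^2\norm{\bm\theta^n}_{\bV}^2\le\eps^2\lambda_{\max}(\mathbf S_r)\norm{\bm\theta^n}_M^2,
\end{equation*}
and analogous bounds for the exchange-difference and second-derivative terms. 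I will not need the $\dt\sum\norm{\bm\theta^n}_{\bV}^2$ term on the right of \eqref{eq:retained-recovery-stability}; including it only makes the bound more conservative, consistent with the word ``conservative'' in the statement.

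Second, for the discrete time derivative I use the elementary inequality
\begin{equation*}
\norm{\bm\theta^n-\bm\theta^{n-1}}_M^2\le2\norm{\bm\theta^n}_M^2+2\norm{\bm\theta^{n-1}}_M^2\le4\max_{0\le k\le m}\norm{\bm\theta^k}_M^2 .
\end{equation*}
Multiplying by $\eps^4/\dt^2$ gives the $4\eps^4/\dt^2$ contribution to $C_{r,\eps}$. Because this term involves $\bm\theta^{n-1}$, the maximum on the right of \eqref{eq:retained-recovery-stability} must include $n=0$. (Under the choice $\bU_r^0=P_r\bU_h^0$ of Theorem~\ref{thm:fom-rom}, $\bm\theta^0=0$ anyway.) Adding the mass term, which contributes the leading $1$, and taking the maximum over $1\le n\le m$ then gives \eqref{eq:retained-recovery-stability} with the constant \eqref{eq:retained-recovery-constant}.

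The main obstacle is the second-derivative term in the $P_1$ case. There the ``second derivatives'' are produced by interpolating the gradient and differentiating once more. I must check that this numerical operator is linear in the nodal values, so that the term is still the quadratic form $(\mathbf a^n)^T\mathbf S_r^{(2)}\mathbf a^n$. This requires $\mathbf S_r^{(2)}$ to be assembled from the same reconstructed second derivatives of the modes $\phi_{l,k}$ that enter the recovery operator \eqref{eq:recovery-operator}. Provided $\mathbf S_r^{(2)}$ is defined in this way, the Rayleigh argument goes through unchanged, and all dependence on $h$ is absorbed into the computable eigenvalue $\lambda_{\max}(\mathbf S_r^{(2)})$.
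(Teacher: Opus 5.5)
Your proposal is correct and follows the paper's proof essentially step for step. Both use the coefficient identity $\|\bm\theta^n\|_M^2=|\mathbf a^n|_2^2$ from $M_0$-orthonormality and Rayleigh-quotient bounds for the Gram matrices $\mathbf S_r$, $\mathbf E_r$, $\mathbf S_r^{(2)}$, where the latter relies on linearity of the numerical second-derivative operator, as the paper also assumes. Both then obtain the $4\eps^4/\dt^2$ factor from the elementary $2+2$ bound and append the unused nonnegative term $\dt\sum_n\|\bm\theta^n\|_{\bV}^2$; the only cosmetic difference is that you apply that inequality directly in the mass norm rather than through the coefficient differences $\mathbf a^n-\mathbf a^{n-1}$, which is equivalent.
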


\begin{proof}
Since $\bm\theta^n\in\bV_r$ for $0\le n\le m$, its expansion \eqref{eq:theta-reduced-expansion} has coefficient vector
\begin{equation}
\mathbf a^n=(a_1^n,\ldots,a_r^n)^T,
\qquad
a_k^n=M_0(\bm\theta^n,\bm\phi_k),
\quad k=1,\ldots,r.
\label{eq:retained-coefficient-vector}
\end{equation}
The $M_0$-orthonormality relation $M_0(\bm\phi_j,\bm\phi_i)=\delta_{ij}$ therefore gives
\begin{align}
\norm{\bm\theta^n}_M^2
&=M_0(\bm\theta^n,\bm\theta^n)
\notag\\
&=\sum_{i,j=1}^r a_i^na_j^nM_0(\bm\phi_j,\bm\phi_i)
=\sum_{k=1}^r(a_k^n)^2
=|\mathbf a^n|_2^2.
\label{eq:retained-mass-coefficients}
\end{align}

Each matrix in \eqref{eq:recovery-gram-matrices} is a real symmetric positive-semidefinite Gram matrix. Consequently, the Rayleigh quotient estimate gives
\begin{equation}
\mathbf a^T\mathbf G\mathbf a
\le\lambda_{\max}(\mathbf G)|\mathbf a|_2^2,
\qquad
\mathbf G\in\{\mathbf S_r,\mathbf E_r,\mathbf S_r^{(2)}\},
\quad\mathbf a\in\R^r.
\label{eq:retained-rayleigh-bound}
\end{equation}
We apply this estimate separately to the gradient, inter-continuum difference, and second-derivative contributions.

First, the definition of $\mathbf S_r$ and \eqref{eq:retained-mass-coefficients} imply
\begin{align}
\norm{\bm\theta^n}_{\bV}^2
&=(\bm\theta^n,\bm\theta^n)_{\bV}
=\sum_{i,j=1}^r a_i^na_j^n(\bm\phi_j,\bm\phi_i)_{\bV}
\notag\\
&=(\mathbf a^n)^T\mathbf S_r\mathbf a^n
\le\lambda_{\max}(\mathbf S_r)|\mathbf a^n|_2^2
=\lambda_{\max}(\mathbf S_r)\norm{\bm\theta^n}_M^2.
\label{eq:retained-gradient-gram-bound}
\end{align}
For the inter-continuum difference, the componentwise expansion is
\begin{equation}
\theta_1^n-\theta_2^n
=\sum_{k=1}^r a_k^n(\phi_{1,k}-\phi_{2,k}).
\label{eq:retained-continuum-difference-expansion}
\end{equation}
Using the definition of $\mathbf E_r$, we obtain
\begin{align}
\norm{\theta_1^n-\theta_2^n}_{L^2}^2
&=\sum_{i,j=1}^r a_i^na_j^n
(\phi_{1,j}-\phi_{2,j},\phi_{1,i}-\phi_{2,i})_{L^2}
\notag\\
&=(\mathbf a^n)^T\mathbf E_r\mathbf a^n
\le\lambda_{\max}(\mathbf E_r)|\mathbf a^n|_2^2
=\lambda_{\max}(\mathbf E_r)\norm{\bm\theta^n}_M^2.
\label{eq:retained-exchange-gram-bound}
\end{align}

The numerical second-derivative evaluation described after \eqref{eq:recovery-operator} is linear on the fixed discrete space, both for direct $P_2$ differentiation and for differentiation after interpolation of the $P_1$ first derivatives. Thus,
\begin{equation}
\frac{\partial^2\theta_l^n}{\partial x_\alpha\partial x_\beta}
=\sum_{k=1}^r a_k^n
\frac{\partial^2\phi_{l,k}}{\partial x_\alpha\partial x_\beta},
\qquad l=1,2,\quad\alpha,\beta=1,\ldots,d.
\label{eq:retained-second-derivative-expansion}
\end{equation}
Expanding the squared norms, summing over both continua and all derivative directions, and using the definition of $\mathbf S_r^{(2)}$ gives
\begin{align}
\sum_{l=1}^2\sum_{\alpha,\beta=1}^d
\left\|\frac{\partial^2\theta_l^n}
{\partial x_\alpha\partial x_\beta}\right\|_{L^2}^2
&=\sum_{i,j=1}^r a_i^na_j^n
\sum_{l=1}^2\sum_{\alpha,\beta=1}^d
\left(\frac{\partial^2\phi_{l,j}}{\partial x_\alpha\partial x_\beta},
\frac{\partial^2\phi_{l,i}}{\partial x_\alpha\partial x_\beta}\right)_{L^2}
=(\mathbf a^n)^T\mathbf S_r^{(2)}\mathbf a^n
\notag\\
&\le\lambda_{\max}(\mathbf S_r^{(2)})|\mathbf a^n|_2^2
=\lambda_{\max}(\mathbf S_r^{(2)})\norm{\bm\theta^n}_M^2.
\label{eq:retained-gram-bounds}
\end{align}

Next, because the retained POD basis is independent of time,
\begin{equation}
\frac{\bm\theta^n-\bm\theta^{n-1}}{\dt}
=\sum_{k=1}^r\frac{a_k^n-a_k^{n-1}}{\dt}\bm\phi_k,
\qquad 1\le n\le m.
\label{eq:retained-time-difference-expansion}
\end{equation}
Applying mass orthonormality to this expansion and then the elementary inequality $|\mathbf x-\mathbf y|_2^2\le2|\mathbf x|_2^2+2|\mathbf y|_2^2$ yields
\begin{align}
\left\|\frac{\bm\theta^n-\bm\theta^{n-1}}{\dt}\right\|_M^2
&=\left|\frac{\mathbf a^n-\mathbf a^{n-1}}{\dt}\right|_2^2
\le\frac{2}{\dt^2}
\left(|\mathbf a^n|_2^2+|\mathbf a^{n-1}|_2^2\right)
=\frac{2}{\dt^2}
\left(\norm{\bm\theta^n}_M^2+\norm{\bm\theta^{n-1}}_M^2\right)
\notag\\
&\le\frac{4}{\dt^2}\max_{0\le j\le m}\norm{\bm\theta^j}_M^2.
\label{eq:retained-time-difference}
\end{align}

We now substitute these bounds into the recovery norm. For $\bm\theta^n$, definition \eqref{eq:recovery-norm} reads
\begin{align}
\norm{\bm\theta^n}_{\mathcal R,\eps}^2
={}&\norm{\bm\theta^n}_M^2
+\eps^2\norm{\bm\theta^n}_{\bV}^2
+\eps^2\norm{\theta_1^n-\theta_2^n}_{L^2}^2
\notag\\
&+\eps^4\sum_{l=1}^2\sum_{\alpha,\beta=1}^d
\left\|\frac{\partial^2\theta_l^n}{\partial x_\alpha\partial x_\beta}\right\|_{L^2}^2
+\eps^4\left\|\frac{\bm\theta^n-\bm\theta^{n-1}}{\dt}\right\|_M^2.
\label{eq:retained-recovery-norm-expanded}
\end{align}
Applying \eqref{eq:retained-gradient-gram-bound}, \eqref{eq:retained-exchange-gram-bound}, \eqref{eq:retained-gram-bounds}, and \eqref{eq:retained-time-difference} term by term yields
\begin{align}
\norm{\bm\theta^n}_{\mathcal R,\eps}^2
\le{}&\bigl[1+\eps^2\lambda_{\max}(\mathbf S_r)
+\eps^2\lambda_{\max}(\mathbf E_r)
+\eps^4\lambda_{\max}(\mathbf S_r^{(2)})\bigr]
\norm{\bm\theta^n}_M^2
+\frac{4\eps^4}{\dt^2}
\max_{0\le j\le m}\norm{\bm\theta^j}_M^2
\notag\\
\le{}&C_{r,\eps}\max_{0\le j\le m}\norm{\bm\theta^j}_M^2,
\label{eq:retained-recovery-pointwise-bound}
\end{align}
where the last step uses $\norm{\bm\theta^n}_M^2\le\max_{0\le j\le m}\norm{\bm\theta^j}_M^2$ and the explicit constant in \eqref{eq:retained-recovery-constant}. Since this constant is independent of the time index $n$, taking the maximum gives the stronger estimate
\begin{equation}
\max_{1\le n\le m}\norm{\bm\theta^n}_{\mathcal R,\eps}^2
\le C_{r,\eps}\max_{0\le j\le m}\norm{\bm\theta^j}_M^2.
\label{eq:retained-recovery-mass-only-bound}
\end{equation}
The cumulative term $\dt\sum_{n=1}^m\norm{\bm\theta^n}_{\bV}^2$ is nonnegative. Adding it inside the bracket on the right of \eqref{eq:retained-recovery-mass-only-bound} proves \eqref{eq:retained-recovery-stability}. This term is retained in the lemma statement to match the discrete energy quantity controlled by the intermediate estimates \eqref{eq:theta-mass-bound} and \eqref{eq:theta-energy-bound} in the proof of Theorem~\ref{thm:fom-rom}.

The factor $4\eps^4/\dt^2$ is a conservative consequence of \eqref{eq:retained-time-difference}; the bound is not asserted to be uniform as $\dt\to0$. For a particular computed trajectory, a sharper time-difference contribution may instead be evaluated from the actual reduced coefficient differences in \eqref{eq:retained-time-difference-expansion}.
\end{proof}

\subsection{Recovery-weighted discarded POD tail}
\label{subsec:discarded-tail}

Define
\begin{equation}
\mathcal Q_{\eps,r}
=\max_{1\le n\le m}\norm{\bm\eta^n}_{\mathcal R,\eps}^2,
\qquad
\mathcal D_{t,r}
=\max_{1\le n\le m}
\left\|\frac{\bm\eta^n-\bm\eta^{n-1}}{\dt}\right\|_M^2.
\label{eq:Qr-Dtr-definition}
\end{equation}

\begin{lemma}[Weighted discarded-mode estimate]
\label{lem:weighted-tail}
The recovery-weighted tail satisfies
\begin{align}
\mathcal Q_{\eps,r}
\le{}&
m\sum_{k=r+1}^s\lambda_k
+m\eps^2\sum_{k=r+1}^s
\lambda_k\norm{\bm\phi_k}_{\bV}^2
\notag\\
&+m\eps^2\sum_{k=r+1}^s
\lambda_k\norm{\phi_{1,k}-\phi_{2,k}}_{L^2(\Omega)}^2
\notag\\
&+m\eps^4\sum_{k=r+1}^s
\lambda_k\sum_{l=1}^2\sum_{\alpha,\beta=1}^d
\left\|\frac{\partial^2\phi_{l,k}}
{\partial x_\alpha\partial x_\beta}\right\|_{L^2(\Omega)}^2
+\eps^4\mathcal D_{t,r}.
\label{eq:weighted-tail-bound}
\end{align}
\end{lemma}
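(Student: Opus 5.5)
We bound the maximum over $n$ of the recovery norm by the sum over $n$ of its first four (spatial) contributions, and treat the time-difference term separately through $\mathcal D_{t,r}$. Fix $n$ and expand $\norm{\bm\eta^n}_{\mathcal R,\eps}^2$ via \eqref{eq:recovery-norm} into five nonnegative pieces: the mass term, the $\eps^2$ gradient term, the $\eps^2$ continuum-difference term, the $\eps^4$ second-derivative term, and the $\eps^4$ discrete time-derivative term. The last piece is bounded pointwise in $n$ by $\eps^4\mathcal D_{t,r}$ directly from \eqref{eq:Qr-Dtr-definition}, so it contributes exactly the final term of \eqref{eq:weighted-tail-bound}. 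For the remaining four, we use that each is nonnegative and write $\max_n a_n\le\sum_{n=1}^m a_n=m\cdot\frac1m\sum_n a_n$. This is the source of the factor $m$ and reduces everything to snapshot averages.

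Each average is then evaluated with the expansion \eqref{eq:projection-error-expansion}, $\bm\eta^n=\sum_{k=r+1}^s c_k^n\bm\phi_k$, together with the covariance identity \eqref{eq:pod-coefficient-covariance}, exactly as in the proof of Lemma~\ref{lem:pod-identities}. The mass and $\bV$ averages are precisely \eqref{eq:pod-tail-mass} and \eqref{eq:pod-tail-energy}. For the continuum difference, we have $\eta_1^n-\eta_2^n=\sum_{k>r}c_k^n(\phi_{1,k}-\phi_{2,k})$. Expanding the squared $L^2$ norm gives a double sum weighted by $c_k^nc_j^n$, and averaging over $n$ kills the cross terms, leaving $\sum_{k>r}\lambda_k\norm{\phi_{1,k}-\phi_{2,k}}_{L^2}^2$.

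The second derivatives require one observation already used in Lemma~\ref{lem:retained-recovery}: the numerically evaluated second-derivative operator (direct $P_2$ differentiation, or differentiation of interpolated $P_1$ gradients) is linear on the fixed space $\bV_h$. Hence $\partial^2\eta_l^n/\partial x_\alpha\partial x_\beta=\sum_{k>r}c_k^n\,\partial^2\phi_{l,k}/\partial x_\alpha\partial x_\beta$, in analogy with \eqref{eq:retained-second-derivative-expansion}. The same diagonalization argument, summed over $l,\alpha,\beta$, then yields the $\eps^4$ tail term. Collecting the four averaged identities, each multiplied by $m$ and its $\eps$ weight, and adding $\eps^4\mathcal D_{t,r}$ gives \eqref{eq:weighted-tail-bound}.

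The main point needing care is the step from $\max_n$ to $\sum_n$. This has to be done on the sum of the four spatial pieces taken together, before any averaging. We must not average the time-difference piece, which is not diagonalized by \eqref{eq:pod-coefficient-covariance} in general, since differences $c_k^n-c_k^{n-1}$ have no diagonal covariance. That is why it is kept as the separate maximum $\mathcal D_{t,r}$ rather than converted into an eigenvalue tail. We also implicitly assume, as in \eqref{eq:Pr-tail-bound}, that the analyzed time levels $1,\ldots,m$ coincide with the snapshot ensemble, so that the averages in Lemma~\ref{lem:pod-identities} range over the same indices as the maximum.
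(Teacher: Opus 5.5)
Your proposal is correct and follows the paper's proof. You split off the time-difference piece as $\eps^4\mathcal D_{t,r}$, bound the remaining maxima by sums over the $m$ snapshot indices, and evaluate those sums with the coefficient covariance \eqref{eq:pod-coefficient-covariance}, using linearity of the discrete second-derivative operator. The paper first splits the maximum termwise ($\max_n$ of a sum bounded by the sum of the maxima) and then passes each maximum to a sum, which gives the identical bound, so taking the four spatial pieces jointly is harmless but not actually required.
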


\begin{proof}
The definition \eqref{eq:recovery-norm} gives
\begin{align}
\mathcal Q_{\eps,r}
\le{}&
\max_n\norm{\bm\eta^n}_M^2
+\eps^2\max_n\norm{\bm\eta^n}_{\bV}^2
+\eps^2\max_n\norm{\eta_1^n-\eta_2^n}_{L^2}^2
\notag\\
&+\eps^4\max_n\sum_{l=1}^2\sum_{\alpha,\beta=1}^d
\left\|\frac{\partial^2\eta_l^n}
{\partial x_\alpha\partial x_\beta}\right\|_{L^2}^2
+\eps^4\mathcal D_{t,r}.
\label{eq:weighted-tail-components}
\end{align}
For each of the first four terms, the maximum is bounded by the sum over $n$. Applying the coefficient covariance \eqref{eq:pod-coefficient-covariance} to the mass, $\bV$, continuum-difference, and recovered-second-derivative Gram matrices gives, respectively,
\begin{align}
\max_n\norm{\bm\eta^n}_M^2
&\le m\sum_{k=r+1}^s\lambda_k,
\notag\\
\max_n\norm{\bm\eta^n}_{\bV}^2
&\le m\sum_{k=r+1}^s\lambda_k\norm{\bm\phi_k}_{\bV}^2,
\notag\\
\max_n\norm{\eta_1^n-\eta_2^n}_{L^2}^2
&\le m\sum_{k=r+1}^s
\lambda_k\norm{\phi_{1,k}-\phi_{2,k}}_{L^2}^2,
\notag\\
\max_n\sum_{l=1}^2\sum_{\alpha,\beta=1}^d
\left\|\frac{\partial^2\eta_l^n}
{\partial x_\alpha\partial x_\beta}\right\|_{L^2}^2
&\le m\sum_{k=r+1}^s
\lambda_k\sum_{l=1}^2\sum_{\alpha,\beta=1}^d
\left\|\frac{\partial^2\phi_{l,k}}
{\partial x_\alpha\partial x_\beta}\right\|_{L^2}^2.
\label{eq:weighted-tail-component-bounds}
\end{align}
Substitution into \eqref{eq:weighted-tail-components} proves \eqref{eq:weighted-tail-bound}.
\end{proof}

Because $P_r$ is time independent,
\begin{equation}
\frac{\bm\eta^n-\bm\eta^{n-1}}{\dt}
=(I-P_r)\frac{\bU_h^n-\bU_h^{n-1}}{\dt}.
\label{eq:derivative-projection-error}
\end{equation}
Since the state-POD eigenvalues characterize $\bU_h^n$ rather than its discrete time derivative, $\mathcal D_{t,r}$ cannot in general be represented by the standard POD eigenvalue tail and is therefore retained explicitly in \eqref{eq:weighted-tail-bound}.

\subsection{Recovered POD-HOMS error}
\label{subsec:recovered-error}

\begin{proposition}[HOMS consistency for the discrete homogenized solution]
\label{prop:homs-consistency}
Let $\bu_h^{\eps,n}=\mathcal R_{\eps,h}^{(2)}\bU_h^n$. Under the regularity, cell-alignment, and second-derivative assumptions of the underlying multi-continuum HOMS analysis~\cite{dong2026homs},
\begin{equation}
\max_{1\le n\le m}
\norm{\bu^\eps(t_n)-\bu_h^{\eps,n}}_{L^2(\Omega)^2}^2
\le C_{\rm HOMS}\left(\eps^2+h^{2q}+\dt^2\right).
\label{eq:homs-consistency}
\end{equation}
The constant is independent of $\eps$, $h$, and $\dt$ in the admissible asymptotic and discretization regime.
\end{proposition}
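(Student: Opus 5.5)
The estimate will be obtained by a triangle-inequality splitting through the continuous recovered field. Let $\bu^{(0)}$ be the continuous homogenized solution of \eqref{eq:hom-weak}, and let $\mathcal R_\eps^{(2)}$ denote the continuous second-order recovery operator, i.e. \eqref{eq:homs-representation} with exact derivatives. We write
\begin{equation*}
\bu^\eps(t_n)-\bu_h^{\eps,n}
=\bigl[\bu^\eps(t_n)-\mathcal R_\eps^{(2)}\bu^{(0)}(t_n)\bigr]
+\bigl[\mathcal R_\eps^{(2)}\bu^{(0)}(t_n)-\mathcal R_{\eps,h}^{(2)}\bU_h^n\bigr].
\end{equation*}
The first bracket is the pure asymptotic consistency error. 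We take it directly from the multi-continuum HOMS analysis of~\cite{dong2026homs}. Under the stated regularity and cell-alignment assumptions (alignment controls the boundary-layer mismatch of the Dirichlet cell closure), it is bounded in $L^2(\Omega)^2$ by $C\eps$, uniformly in $t\in[0,T]$. Squaring gives the $\eps^2$ contribution.

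For the second bracket, the recovery operator is linear, so the difference splits into a sum of cell function $\times$ (exact derivative of $\bu^{(0)}$ minus discrete derivative of $\bU_h^n$) terms. Every cell function is bounded by $C_{\rm cell}$, exactly as in Proposition~\ref{prop:recovery-stability}. Hence the second bracket is bounded by $C\|\bu^{(0)}(t_n)-\bU_h^n\|_{L^2}$ plus $\eps$-weighted differences of gradients, continuum differences, second derivatives, and time derivatives.

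These differences are then controlled by standard fully discrete parabolic estimates for the G\aa rding-stable system. Lemma~\ref{lem:garding} provides the continuity and G\aa rding inequality needed for a Ritz-projection and backward-Euler argument. This gives the $L^2$ error $O(h^{q+1}+\dt)$ and the gradient error $O(h^q+\dt)$. For the $\eps^2$-weighted second-derivative term, we use the assumed second-derivative reconstruction accuracy (direct $P_2$ differentiation, or the recovered derivative in the $P_1$ case), which yields $O(h^{q-1})$. The factor $\eps^2\le1$ still allows this term to be absorbed into $h^{2q}$ under the admissible regime $h\lesssim\eps$, or into the assumed recovery accuracy. For the time-difference term, we compare the divided difference $(\bU_h^n-\bU_h^{n-1})/\dt$ with $\partial_t\bu^{(0)}(t_n)$. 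This requires an error estimate for the discrete time derivative, obtained by differencing the error equation in time, and it uses the regularity $\partial_{tt}\bu^{(0)}\in L^2$ and compatible initial data. It gives $O(h^q+\dt)$, multiplied by $\eps^2$.

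Collecting the squared bounds and using $\eps\le1$ yields $C(\eps^2+h^{2q}+\dt^2)$. The constant depends only on cell bounds, coefficient bounds, $T$, and the regularity of $\bu^{(0)}$. The main obstacle is the second-derivative and discrete time-derivative recovery terms. Their control is not in the earlier lemmas of this paper; it depends on the second-derivative assumption and on a time-differenced stability estimate. This is exactly where the appeal to the hypotheses of~\cite{dong2026homs} is essential, and we will state clearly which rates are imported there.
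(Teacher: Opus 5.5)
The paper gives no proof of this proposition. It is stated as an input inherited wholesale from \cite{dong2026homs}: the asymptotic error, the finite-element and backward-Euler errors, and the numerical second-derivative evaluation are all folded into ``the assumptions of the underlying HOMS analysis.'' It is then used only as the first term in the proof of Theorem~\ref{thm:recovered-error}. Your splitting through $\mathcal R_\eps^{(2)}\bu^{(0)}(t_n)$ is a sound and more informative route. It imports only the continuous consistency bound $\norm{\bu^\eps(t)-\mathcal R_\eps^{(2)}\bu^{(0)}(t)}_{L^2}\le C\eps$. Everything else comes from linearity of the recovery, the $L^\infty$ cell bounds, Lemma~\ref{lem:garding}, and standard Ritz-projection and backward-Euler estimates. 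This exposes hypotheses the paper leaves implicit: adjoint $H^2$-regularity for the duality argument, time regularity of $\bu^{(0)}$, a compatible $\bU_h^0$, and a uniformly bounded numerical Hessian of $\bU_h^n$. The price is that you must state them. One justification is wrong as written: $\eps^4h^{2q-2}\le Ch^{2q}$ requires $\eps^2\lesssim h$, whereas the regime $h\lesssim\eps$ you cite gives the reverse inequality. No regime is actually needed, since $\eps,h\le1$ and $q\ge1$ give $\eps^4h^{2q-2}\le\eps^2$. The same remark simplifies your whole second bracket. Every $\eps$- or $\eps^2$-weighted difference (gradient, continuum difference, Hessian, divided time difference) only needs to be bounded uniformly in $n$, $h$, and $\dt$, because its squared contribution is then $O(\eps^2)$. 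In particular, the most delicate step of your plan, a convergence rate for $(\bU_h^n-\bU_h^{n-1})/\dt$, can be replaced by a uniform $L^2$ bound. That bound follows from G\aa rding stability of the time-differenced scheme when $\bU_h^0$ is compatible. The only term that genuinely needs a discretization rate is $\norm{\bu^{(0)}(t_n)-\bU_h^n}_{L^2}=O(h^{q+1}+\dt)$.
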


This proposition represents the inherited HOMS consistency and discretization contribution associated with the discrete homogenized solution. The new POD analysis concerns the difference between $\bu_h^{\eps,n}$ and the recovered POD-reduced field.

\begin{theorem}[Recovered multi-continuum POD-HOMS error]
\label{thm:recovered-error}
Let $\bu_r^{\eps,n}=\mathcal R_{\eps,h}^{(2)}\bU_r^n$. Under the hypotheses of Theorem~\ref{thm:fom-rom}, Proposition~\ref{prop:recovery-stability}, Lemma~\ref{lem:retained-recovery}, and Proposition~\ref{prop:homs-consistency},
\begin{equation}
\max_{1\le n\le m}
\norm{\bu^\eps(t_n)-\bu_r^{\eps,n}}_{L^2(\Omega)^2}^2
\le C\left(
\eps^2+h^{2q}+\dt^2
+\mathcal Q_{\eps,r}
+C_{r,\eps}\mathcal P_r
\right).
\label{eq:recovered-error}
\end{equation}
\end{theorem}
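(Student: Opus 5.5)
The argument is a triangle inequality through the discrete HOMS field $\bu_h^{\eps,n}=\mathcal R_{\eps,h}^{(2)}\bU_h^n$, followed by a linear splitting of the reduced recovery error. For each $1\le n\le m$ we write
\begin{equation*}
\bu^\eps(t_n)-\bu_r^{\eps,n}
=\bigl(\bu^\eps(t_n)-\bu_h^{\eps,n}\bigr)
+\mathcal R_{\eps,h}^{(2)}\bigl(\bU_h^n-\bU_r^n\bigr).
\end{equation*}
Here we use that \eqref{eq:recovery-operator} is linear in the trajectory. This holds because all ingredients are linear on the fixed discrete space: values, gradients, continuum differences, the numerically evaluated second derivatives, and backward differences. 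The squared triangle inequality together with Proposition~\ref{prop:homs-consistency} bounds the first piece by $2C_{\rm HOMS}(\eps^2+h^{2q}+\dt^2)$, uniformly in $n$.

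For the second piece, we insert the splitting \eqref{eq:error-split}, $\bU_h^n-\bU_r^n=\bm\eta^n+\bm\theta^n$, again by linearity of the recovery. The backward-difference term at level $n$ then involves $\bm\eta^{n-1}$ and $\bm\theta^{n-1}$. This is consistent with the definitions of $\norm{\cdot}_{\mathcal R,\eps}$, $\mathcal D_{t,r}$ and Lemma~\ref{lem:retained-recovery}, which allow $\bm\theta^0$; note $\bm\theta^0=0$ by the choice $\bU_r^0=P_r\bU_h^0$. Proposition~\ref{prop:recovery-stability} applied separately to each part gives
\begin{equation*}
\norm{\mathcal R_{\eps,h}^{(2)}(\bm\eta^n+\bm\theta^n)}_{L^2(\Omega)^2}^2
\le 2C_R^2\left(\norm{\bm\eta^n}_{\mathcal R,\eps}^2+\norm{\bm\theta^n}_{\mathcal R,\eps}^2\right).
\end{equation*}
Taking the maximum over $n$, the $\bm\eta$-contribution is exactly $\mathcal Q_{\eps,r}$ by \eqref{eq:Qr-Dtr-definition}. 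This part may additionally be made explicit through Lemma~\ref{lem:weighted-tail}, but that is not needed for the stated bound.

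For the $\bm\theta$-contribution, Lemma~\ref{lem:retained-recovery} gives
\begin{equation*}
\max_n\norm{\bm\theta^n}_{\mathcal R,\eps}^2\le C_{r,\eps}\Bigl[\max_{0\le n\le m}\norm{\bm\theta^n}_M^2+\dt\sum_{n=1}^m\norm{\bm\theta^n}_{\bV}^2\Bigr].
\end{equation*}
The bracket is bounded by the intermediate estimates \eqref{eq:theta-mass-bound} and \eqref{eq:theta-energy-bound} from the proof of Theorem~\ref{thm:fom-rom}, by $C_T\dt\sum_n\norm{\bm\eta^n}_{\bV}^2\le C_T\mathcal P_r$, using \eqref{eq:Pr-definition} and $\bm\theta^0=0$. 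Collecting terms, with $C$ depending on $C_{\rm HOMS}$, $C_R$ and $C_T$, yields \eqref{eq:recovered-error}.

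The main point requiring care is the bookkeeping at this step. We must use the $\bm\theta$ bounds from inside the proof of Theorem~\ref{thm:fom-rom} rather than its statement, since the statement controls $\mathbf e^n$, not $\bm\theta^n$. Splitting $\mathbf e^n$ instead would force the recovery-norm of $\bm\eta$ through $C_{r,\eps}$, which is invalid because $\bm\eta^n\notin\bV_r$. We must also check that the time-difference term at $n=1$ is covered by the index range $0\le n\le m$ in Lemma~\ref{lem:retained-recovery}, and that $\mathcal D_{t,r}$ handles the corresponding $\bm\eta^0$ term.
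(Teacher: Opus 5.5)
Your proposal is correct and follows the paper's proof essentially step for step. The paper splits directly into the three pieces $\bu^\eps(t_n)-\bu_h^{\eps,n}$, $\mathcal R_{\eps,h}^{(2)}\bm\eta^n$ and $\mathcal R_{\eps,h}^{(2)}\bm\theta^n$ with a factor $3$, instead of your two-stage split, and it bounds the pieces the same way you do: the $\bm\eta$ part by $C_R^2\mathcal Q_{\eps,r}$, and the $\bm\theta$ part through Lemma~\ref{lem:retained-recovery} combined with the intermediate estimates \eqref{eq:theta-mass-bound} and \eqref{eq:theta-energy-bound}. Your explicit handling of $\bm\theta^0=0$ and of the $n=1$ time-difference term is bookkeeping the paper leaves implicit.
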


\begin{proof}
At time $t_n$, add and subtract the recovered discrete-HOM field $\bu_h^{\eps,n}$. Linearity of the recovery along the complete discrete trajectory gives
\begin{align}
\bu^\eps(t_n)-\bu_r^{\eps,n}
={}&\left(\bu^\eps(t_n)-\bu_h^{\eps,n}\right)
+\mathcal R_{\eps,h}^{(2)}\bm\eta^n
+\mathcal R_{\eps,h}^{(2)}\bm\theta^n.
\label{eq:recovered-three-term-split}
\end{align}
For the time-derivative corrector, this linearity uses
\begin{equation}
\frac{(\bU_h^n-\bU_r^n)-(\bU_h^{n-1}-\bU_r^{n-1})}{\dt}
=\frac{\bm\eta^n-\bm\eta^{n-1}}{\dt}
+\frac{\bm\theta^n-\bm\theta^{n-1}}{\dt}.
\label{eq:recovery-time-linearity}
\end{equation}
The three-term squared triangle inequality applied to \eqref{eq:recovered-three-term-split} yields
\begin{align}
\norm{\bu^\eps(t_n)-\bu_r^{\eps,n}}_{L^2}^2
\le{}&
3\norm{\bu^\eps(t_n)-\bu_h^{\eps,n}}_{L^2}^2
+3\norm{\mathcal R_{\eps,h}^{(2)}\bm\eta^n}_{L^2}^2
\notag\\
&+3\norm{\mathcal R_{\eps,h}^{(2)}\bm\theta^n}_{L^2}^2.
\label{eq:recovered-three-term-inequality}
\end{align}

Proposition~\ref{prop:homs-consistency} bounds the first term:
\begin{equation}
\max_n\norm{\bu^\eps(t_n)-\bu_h^{\eps,n}}_{L^2}^2
\le C_{\rm HOMS}(\eps^2+h^{2q}+\dt^2).
\label{eq:recovered-full-order-term}
\end{equation}
Proposition~\ref{prop:recovery-stability} and \eqref{eq:Qr-Dtr-definition} bound the discarded component:
\begin{equation}
\max_n\norm{\mathcal R_{\eps,h}^{(2)}\bm\eta^n}_{L^2}^2
\le C_R^2\mathcal Q_{\eps,r}.
\label{eq:recovered-discarded-term}
\end{equation}
The retained component satisfies, by Proposition~\ref{prop:recovery-stability} and Lemma~\ref{lem:retained-recovery},
\begin{align}
\max_n\norm{\mathcal R_{\eps,h}^{(2)}\bm\theta^n}_{L^2}^2
&\le C_R^2\max_n\norm{\bm\theta^n}_{\mathcal R,\eps}^2
\notag\\
&\le C_R^2C_{r,\eps}
\left[
\max_n\norm{\bm\theta^n}_M^2
+\dt\sum_{n=1}^m\norm{\bm\theta^n}_{\bV}^2
\right]
\notag\\
&\le C_R^2C_{r,\eps}C_T\mathcal P_r,
\label{eq:recovered-retained-term}
\end{align}
where the last step uses the intermediate estimates \eqref{eq:theta-mass-bound} and \eqref{eq:theta-energy-bound} from Theorem~\ref{thm:fom-rom}.

Taking the maximum in \eqref{eq:recovered-three-term-inequality} and substituting \eqref{eq:recovered-full-order-term}, \eqref{eq:recovered-discarded-term}, and \eqref{eq:recovered-retained-term} gives
\begin{align}
&\max_n\norm{\bu^\eps(t_n)-\bu_r^{\eps,n}}_{L^2(\Omega)^2}^2
\notag\\
&\quad\le
3C_{\rm HOMS}(\eps^2+h^{2q}+\dt^2)
+3C_R^2\mathcal Q_{\eps,r}
+3C_R^2C_{r,\eps}C_T\mathcal P_r.
\label{eq:recovered-bound-with-constants}
\end{align}
Absorbing the fixed constants into a generic constant $C$ proves \eqref{eq:recovered-error}.
\end{proof}


\section{Efficient POD-HOMS algorithms}
\label{sec:algorithms}

This section recasts the formulation of Section~\ref{sec:model} as an implementable offline--online procedure. The two algorithms use the same cell solutions, effective coefficients, joint POD space, and reduced time integrator. They differ only in the organization of the second-order HOMS recovery: Algorithm~1 applies the complete recovery at every requested output time, whereas Algorithm~2 applies its spatial part to the retained POD modes once offline.

\subsection{Common offline stage and reduced dynamics}
\label{subsec:common-offline}

The common offline stage begins by solving the cell problems for $N_l^\alpha$, $M_l$, $N_l^{\alpha\beta}$, $G_l$, $C_l^\alpha$, $F_l^\alpha$, and $K_l$. These functions respectively provide the first-order spatial correction, continuum-exchange correction, second-gradient correction, time-derivative correction, same-continuum and cross-continuum gradient corrections, and higher-order continuum-difference correction. They are computed once on the representative cell. The effective coefficients $c_l^*$, $\kappa_l^*$, $\bm K_{lm}^*$, and $Q_l^*$ are then evaluated and used to assemble the coupled homogenized finite element system.

Using the algebraic coefficient vector $\mathbf U^n$ associated with $\bU_h^n$ in Section~\ref{subsec:block-system}, its fully discrete homogenized update has the block transient form
\begin{equation}
\left(\frac{\mathbf M}{\dt}+\mathbf A\right)\mathbf U^n
=\mathbf F^n+\frac{\mathbf M}{\dt}\mathbf U^{n-1},
\label{eq:algorithm-common-fom}
\end{equation}
with the matrices defined in \eqref{eq:block-matrices}. Although the two continuum equations are coupled, \eqref{eq:algorithm-common-fom} is a standard linear evolution problem and is therefore directly amenable to projection-based reduction.

Solving \eqref{eq:algorithm-common-fom} over the training interval gives the snapshot ensemble
\begin{equation*}
\mathcal S_h
=\left\{\bU_h^{n_j}\right\}_{j=1}^{N_s},
\qquad
\bU_h^{n_j}
=\begin{pmatrix}U_{1,h}^{n_j}\\ U_{2,h}^{n_j}\end{pmatrix}.
\end{equation*}
Each snapshot is treated as one coupled vector state. In particular, separate scalar POD spaces are not constructed for $u_1$ and $u_2$, because doing so would discard the snapshot-level correlation generated by inter-continuum exchange. The mass-weighted correlation problem \eqref{eq:pod-correlation} produces the joint modes
\begin{equation*}
\bm\phi_k
=\begin{pmatrix}\phi_{1,k}\\ \phi_{2,k}\end{pmatrix},
\qquad k=1,\ldots,r,
\end{equation*}
and the reduced state is $\bU_r^n=\sum_{k=1}^rz_k^n\bm\phi_k$. Projection of the block system yields
\begin{equation}
\left(\frac{\mathbf M_r}{\dt}+\mathbf A_r\right)\mathbf z^n
=\mathbf f_r^n+\frac{\mathbf M_r}{\dt}\mathbf z^{n-1},
\label{eq:algorithm-common-rom}
\end{equation}
where $(\mathbf M_r)_{ij}=M_0(\bm\phi_j,\bm\phi_i)$, $(\mathbf A_r)_{ij}=B_0(\bm\phi_j,\bm\phi_i)$, and $(\mathbf f_r^n)_i=F(\bm\phi_i;t_n)$. Thus both POD-HOMS algorithms produce exactly the same coefficient sequence $\{\mathbf z^n\}$; the computational paths separate only after the reduced homogenized update \eqref{eq:algorithm-common-rom}.

Figure~\ref{fig:pod-homs-workflow} summarizes the shared stages and the two recovery paths. Algorithm~1 retains the conventional time-by-time HOMS assembly. Algorithm~2 moves mode-dependent spatial recovery to the offline stage and leaves only coefficient combinations online.

\begin{figure}[htbp]
\centering
\includegraphics[width=\textwidth]{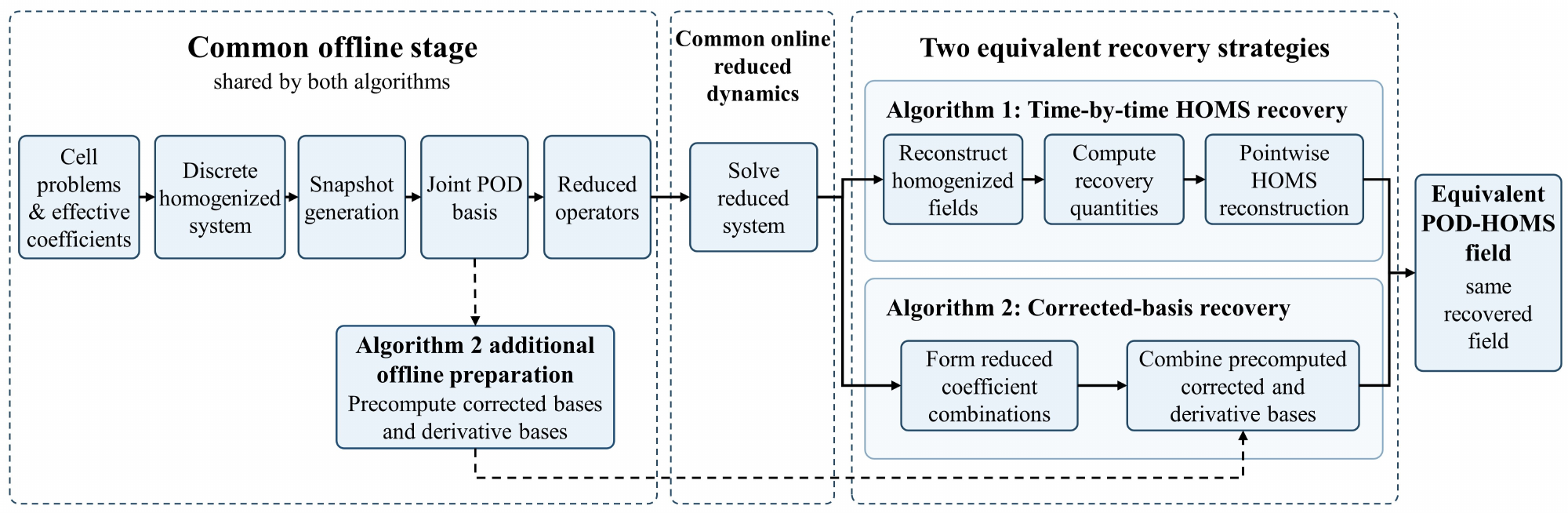}
\caption{Common offline stage, shared reduced dynamics, and recovery branches of the two POD-HOMS algorithms.}
\label{fig:pod-homs-workflow}
\end{figure}

\subsection{Algorithm 1: online HOMS reconstruction}
\label{subsec:algorithm1}

At time $t_n$, Algorithm~1 first solves \eqref{eq:algorithm-common-rom} and reconstructs each continuum component by
\begin{equation}
U_{l,r}^n=\sum_{k=1}^rz_k^n\phi_{l,k},
\qquad l=1,2.
\label{eq:algorithm1-macro-reconstruction}
\end{equation}
The quantities needed by the recovery operator are then recomputed from this field:
\begin{equation}
\left(\frac{\partial U_{l,r}^n}{\partial x_1},\ldots,
\frac{\partial U_{l,r}^n}{\partial x_d}\right),\qquad
\left\{\frac{\partial^2 U_{l,r}^n}
{\partial x_\alpha\partial x_\beta}\right\}_{\alpha,\beta=1}^d,
\qquad
U_{\bar l,r}^n-U_{l,r}^n,\qquad
\frac{U_{l,r}^n-U_{l,r}^{n-1}}{\dt}.
\label{eq:algorithm1-recovery-data}
\end{equation}
The reconstructed macrofields and the quantities in \eqref{eq:algorithm1-recovery-data} are interpolated from the homogenized mesh to the fine output mesh. At every output point $\bm x$, the local coordinate $\bm y=\bm x/\eps$ is formed and all cell functions are evaluated. The recovered component is then assembled as
\begin{align}
u_{l,r,{\rm A1}}^{\eps,n}
={}&U_{l,r}^n
+\eps N_l^\alpha\frac{\partial U_{l,r}^n}{\partial x_\alpha}
+\eps M_l(U_{\bar l,r}^n-U_{l,r}^n)
\notag\\
&+\eps^2N_l^{\alpha\beta}
\frac{\partial^2 U_{l,r}^n}{\partial x_\alpha\partial x_\beta}
+\eps^2G_l\frac{U_{l,r}^n-U_{l,r}^{n-1}}{\dt}
+\eps^2C_l^\alpha\frac{\partial U_{l,r}^n}{\partial x_\alpha}
\notag\\
&+\eps^2F_l^\alpha\frac{\partial U_{\bar l,r}^n}{\partial x_\alpha}
+\eps^2K_l(U_{\bar l,r}^n-U_{l,r}^n),
\qquad l=1,2,
\label{eq:algorithm1}
\end{align}
where the cell functions in \eqref{eq:algorithm1} are evaluated at $\bm x/\eps$. This implementation is direct and mirrors the standard HOMS recovery. Its drawback is organizational rather than mathematical: interpolation, derivative recovery, local cell evaluation, and pointwise assembly are repeated at every requested time level.

\begin{center}
\fbox{\begin{minipage}{0.94\textwidth}
\small
\textbf{Algorithm 1. POD-HOMS with online reconstruction}

\textbf{Offline stage}
\begin{enumerate}
\setlength{\itemsep}{0pt}
\item Solve all cell problems and compute the effective coefficients.
\item Assemble and solve the discrete homogenized system, generate coupled snapshots, and construct the joint POD basis.
\item Assemble $\mathbf M_r$, $\mathbf A_r$, and the reduced load representation.
\end{enumerate}
\textbf{Online stage: for $n=1,\ldots,N_t$}
\begin{enumerate}
\setlength{\itemsep}{0pt}
\item Solve \eqref{eq:algorithm-common-rom} for $\mathbf z^n$ and reconstruct $\bU_r^n$.
\item Compute first and second derivatives, continuum differences, and backward time differences.
\item Interpolate all required macro quantities to the fine output mesh.
\item Evaluate the cell functions at $\bm x/\eps$ and assemble \eqref{eq:algorithm1} pointwise.
\end{enumerate}
\end{minipage}}
\end{center}

\subsection{Algorithm 2: corrected-basis reconstruction}
\label{subsec:algorithm2}

Algorithm~2 changes only the order in which the linear recovery operations are performed. Since $U_{l,r}^n$ is a linear combination of retained POD modes, all terms paired with $z_k^n$ can be applied to each mode before time integration. Define the spatially corrected modes
\begin{align*}
\Phi_{l,k}^\eps
={}&\phi_{l,k}
+\eps N_l^\alpha\frac{\partial\phi_{l,k}}{\partial x_\alpha}
+\eps M_l(\phi_{\bar l,k}-\phi_{l,k})
+\eps^2N_l^{\alpha\beta}
\frac{\partial^2\phi_{l,k}}{\partial x_\alpha\partial x_\beta}
\\
&+\eps^2C_l^\alpha\frac{\partial\phi_{l,k}}{\partial x_\alpha}
+\eps^2F_l^\alpha\frac{\partial\phi_{\bar l,k}}{\partial x_\alpha}
+\eps^2K_l(\phi_{\bar l,k}-\phi_{l,k})
\end{align*}
and derivative bases
\begin{equation}
\Psi_{l,k}^\eps(\bm x)=G_l(\bm x/\eps)\phi_{l,k}(\bm x).
\label{eq:derivative-mode}
\end{equation}
For compact vector notation, set
$\bm\Phi_k^\eps=(\Phi_{1,k}^\eps,\Phi_{2,k}^\eps)^T$ and
$\bm\Psi_k^\eps=(\Psi_{1,k}^\eps,\Psi_{2,k}^\eps)^T$.
The corrected modes $\Phi_{l,k}^\eps$ contain the value, gradient, second-gradient, cross-gradient, and continuum-difference contributions. They are interpolated and stored directly on the fine output mesh. Consequently, the corresponding recovery operations are performed once for each retained mode rather than once for every mode at every output time.

The time-derivative correction must be kept separate. Indeed, \eqref{eq:algorithm1-macro-reconstruction} gives
\begin{equation*}
\frac{U_{l,r}^n-U_{l,r}^{n-1}}{\dt}
=\sum_{k=1}^r\frac{z_k^n-z_k^{n-1}}{\dt}\phi_{l,k},
\end{equation*}
and therefore
\begin{equation}
\eps^2G_l\frac{U_{l,r}^n-U_{l,r}^{n-1}}{\dt}
=\eps^2\sum_{k=1}^r
\frac{z_k^n-z_k^{n-1}}{\dt}\Psi_{l,k}^\eps.
\label{eq:derivative-basis-combination}
\end{equation}
The coefficients in \eqref{eq:derivative-basis-combination} are time differences of $z_k^n$, not the current coefficients themselves. Thus the term $G_l(U_{l,r}^n-U_{l,r}^{n-1})/\dt$ cannot be absorbed into the spatially corrected basis that is paired with $z_k^n$; this is why Algorithm~2 requires the additional derivative basis \eqref{eq:derivative-mode}.

The online field is
\begin{equation}
u_{l,r,{\rm A2}}^{\eps,n}
=\sum_{k=1}^rz_k^n\Phi_{l,k}^\eps
+\eps^2\sum_{k=1}^r\frac{z_k^n-z_k^{n-1}}{\dt}\Psi_{l,k}^\eps,
\qquad l=1,2.
\label{eq:algorithm2}
\end{equation}
Once $\Phi_{l,k}^\eps$ and $\Psi_{l,k}^\eps$ have been stored, the online recovery consists only of the two linear combinations in \eqref{eq:algorithm2}; neither macrofield reconstruction nor pointwise reevaluation of the spatial HOMS library is required.

The equivalence of the two algorithms follows directly from linearity. Let $\mathcal R_{\eps,h,{\rm sp}}^{(2)}$ denote the recovery operator \eqref{eq:recovery-operator} without the term $G_l(v_{l,h}^n-v_{l,h}^{n-1})/\dt$. Then
\begin{equation}
\mathcal R_{\eps,h,{\rm sp}}^{(2)}
\left(\sum_{k=1}^rz_k^n\bm\phi_k\right)
=\sum_{k=1}^rz_k^n
\mathcal R_{\eps,h,{\rm sp}}^{(2)}\bm\phi_k
=\sum_{k=1}^rz_k^n\bm\Phi_k^\eps.
\label{eq:spatial-recovery-linearity}
\end{equation}
Combining \eqref{eq:spatial-recovery-linearity} with \eqref{eq:derivative-basis-combination} yields
\begin{equation*}
u_{l,r,{\rm A1}}^{\eps,n}=u_{l,r,{\rm A2}}^{\eps,n},
\qquad l=1,2.
\end{equation*}
The equality holds when the same interpolation and discrete differentiation operators are applied, up to floating-point roundoff. Algorithm~2 is therefore not a new approximation model and does not require a separate error theory; it is a different assembly of the same POD-HOMS solution.

\begin{center}
\fbox{\begin{minipage}{0.94\textwidth}
\small
\textbf{Algorithm 2. POD-HOMS with corrected-basis reconstruction}

\textbf{Offline stage}
\begin{enumerate}
\setlength{\itemsep}{0pt}
\item Perform the common cell-problem, discrete-homogenized-system, snapshot, joint-POD, and reduced-matrix stages of Section~\ref{subsec:common-offline}.
\item For $k=1,\ldots,r$, interpolate $\bm\phi_k$ to the fine output mesh and compute its first and second derivatives.
\item Apply all spatial HOMS corrections and store $\bm\Phi_k^\eps$.
\item Construct and store the derivative basis $\bm\Psi_k^\eps$.
\end{enumerate}
\textbf{Online stage: for $n=1,\ldots,N_t$}
\begin{enumerate}
\setlength{\itemsep}{0pt}
\item Solve \eqref{eq:algorithm-common-rom} for $\mathbf z^n$.
\item Form $(\mathbf z^n-\mathbf z^{n-1})/\dt$.
\item Evaluate \eqref{eq:algorithm2} by linear combinations of the stored corrected and derivative bases.
\end{enumerate}
\end{minipage}}
\end{center}

\section{Numerical experiments}
\label{sec:numerical}

Numerical experiments are conducted to assess three aspects of the proposed POD-HOMS framework: the accuracy of the reduced multi-continuum approximation, the computational efficiency of the two recovery strategies, and the scalability of the method to large three-dimensional problems. Five representative periodic heterogeneous configurations are considered. Common numerical settings, mesh information, compared methods, and error definitions are introduced first, followed by separate investigations of reduction accuracy, computational performance, and large-scale feasibility.

All computations were performed with FreeFEM++ on a workstation with a 13th-generation Intel Core i7-13700 processor at 2.10 GHz and 32 GB of memory. The domain is $\Omega=(0,1)^d$ and the final time is $T=1$. The time step is $\dt=0.02$ in two dimensions and $\dt=0.002$ in three dimensions. The finite-element order follows the corresponding implementation. With $P_2$ elements, the second derivatives required by the second-order recovery are evaluated directly. With $P_1$ elements, the first derivatives are first interpolated and the resulting fields are differentiated once more. POD modes are retained until the cumulative energy reaches $99.9999\%$ in all numerical tests, including the large-scale three-dimensional test.

The five heterogeneous geometries comprise two-dimensional circular inclusions, a two-dimensional channel, three-dimensional spherical inclusions, a three-dimensional channel, and a two-dimensional three-ellipse configuration (Figure~\ref{fig:geometries}).

\begin{figure}[htbp]
\centering
\begin{tabular}{@{}cc@{}}
\includegraphics[width=0.47\textwidth]{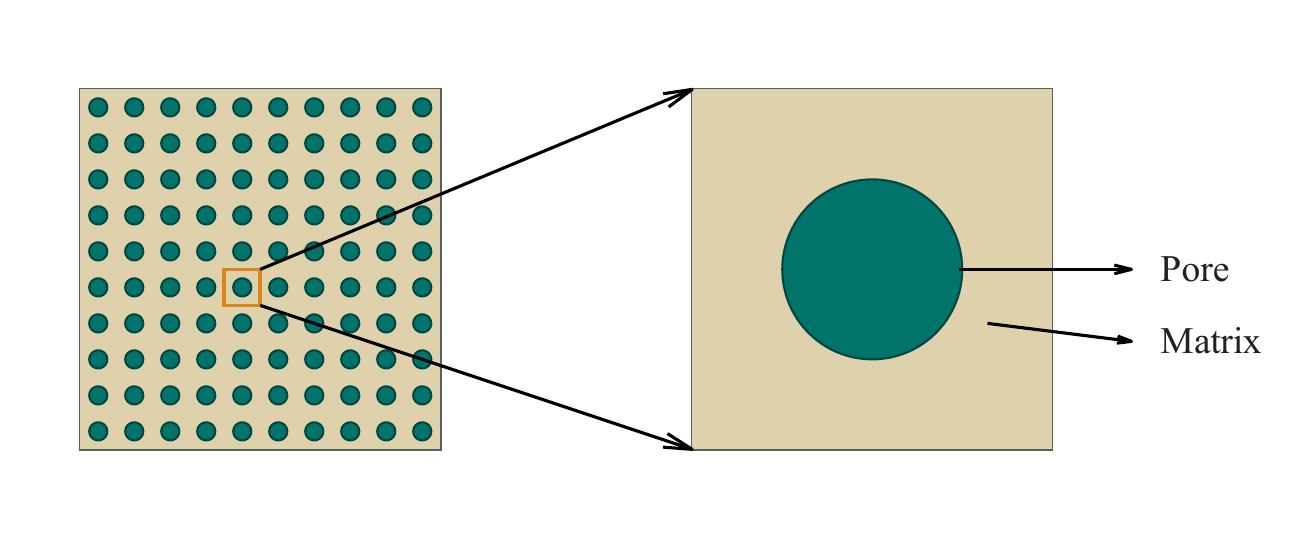} &
\includegraphics[width=0.47\textwidth]{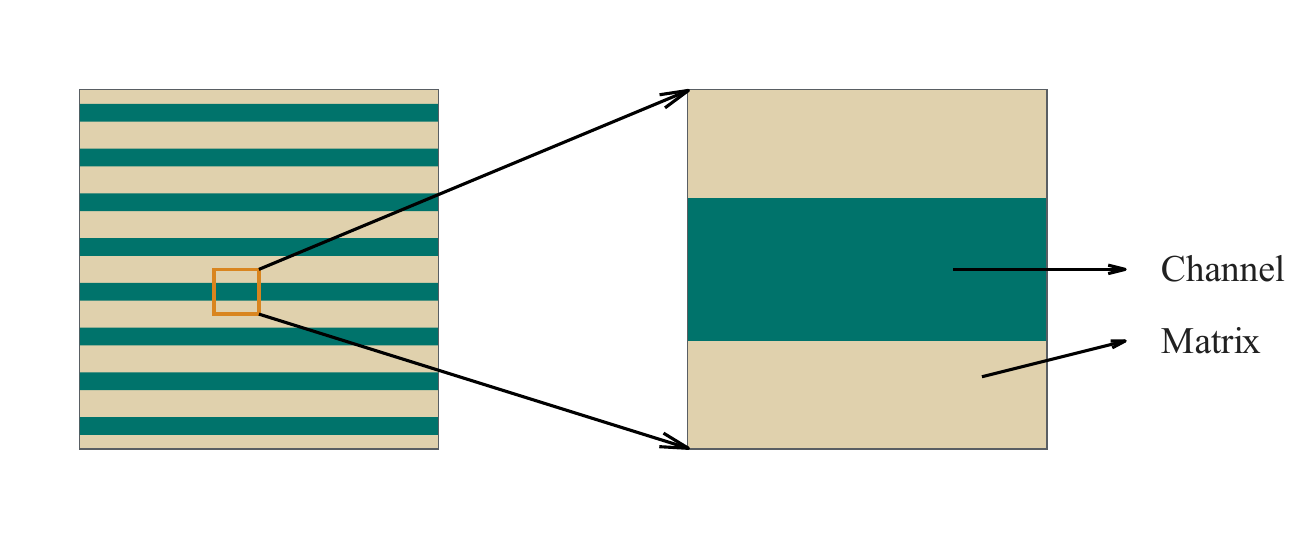}\\
(a) 2D circular inclusions & (b) 2D channel\\[2mm]
\includegraphics[width=0.47\textwidth]{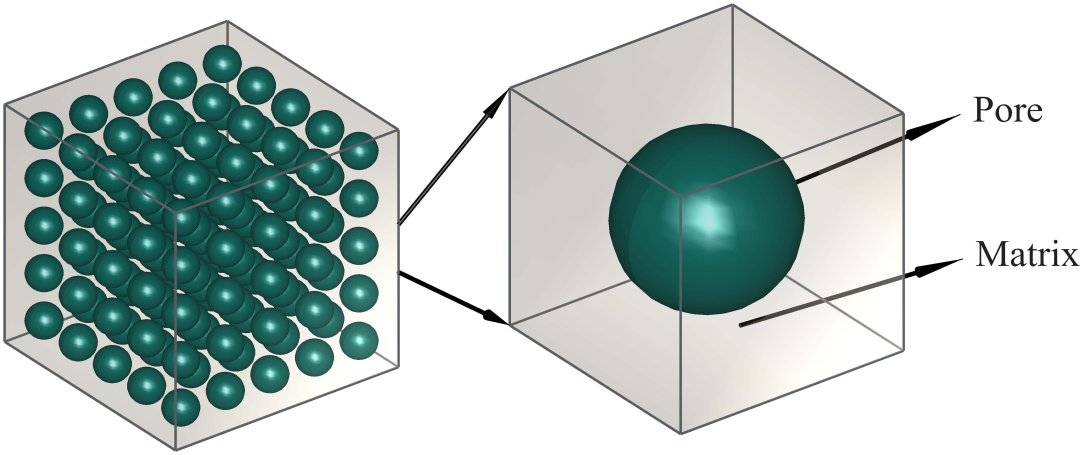} &
\includegraphics[width=0.47\textwidth]{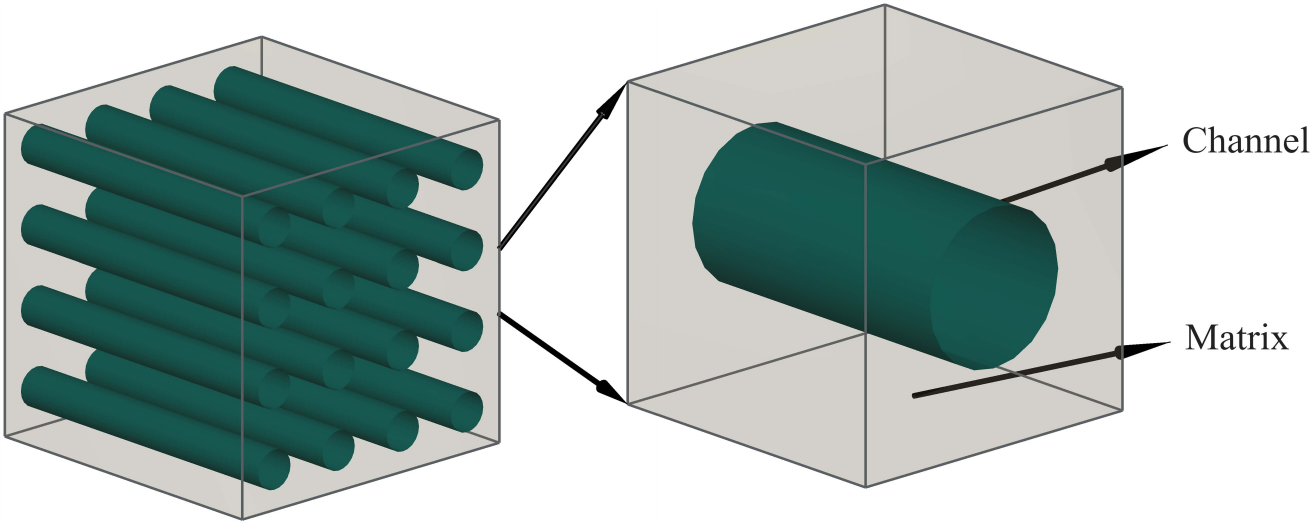}\\
(c) 3D spherical inclusions & (d) 3D channel\\[2mm]
\multicolumn{2}{c}{\includegraphics[width=0.47\textwidth]{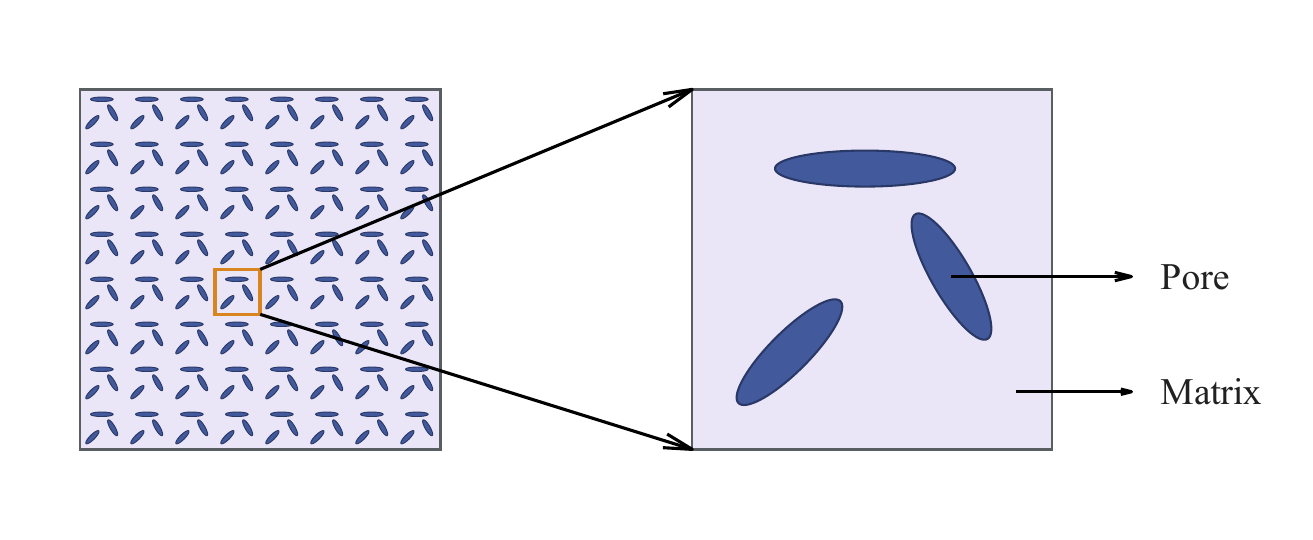}}\\
\multicolumn{2}{c}{(e) 2D three-ellipse inclusions}
\end{tabular}
\caption{Periodic heterogeneous configurations and representative cells used in the standard multi-continuum tests.}
\label{fig:geometries}
\end{figure}

The three-ellipse cell in Figure~\ref{fig:geometries}(e) contains elongated inclusions with centers $(0.27,0.27)$, $(0.72,0.48)$, and $(0.48,0.78)$, semi-axis pairs $(0.20,0.055)$, $(0.20,0.055)$, and $(0.25,0.050)$, and orientations $\pi/4$, $-\pi/3$, and $0$, respectively. An $8\times8$ periodic replication gives $\eps=1/8$. The low-permeability inclusions are embedded in a conducting matrix, with permeability contrasts $10^4$ and $10^2$ for continua~1 and~2. Their different orientations introduce direction-dependent microscopic structures beyond the circular and channel geometries. The initial pressures and exterior Dirichlet data are zero, and the spatially uniform, continuum-dependent sources are
\begin{equation*}
q_1(t)=10^5\bigl(1+0.5\sin(2\pi t)\bigr),
\qquad
q_2(t)=10^5\bigl(1+0.3\cos(2\pi t)\bigr).
\end{equation*}
These distinct temporal forcings test the joint POD approximation when the two continuum trajectories have different amplitudes and time dependence. The implementation uses $P_1$ elements for the fine-scale and cell problems and $P_2$ elements for the homogenized problem.

The phase coefficients, scale ratios, and sources are collected in Table~\ref{tab:parameters}. The separate $Q_1$ and $Q_2$ columns give the exchange coefficients used in the first and second continuum equations, respectively, without requiring them to coincide. Presenting these data together avoids repeating the physical setup in the accuracy and efficiency studies. The three-dimensional channel uses the same phase contrast as the spherical case but a different periodic geometry and scale ratio. In the three-ellipse case, phase~1 denotes the matrix and phase~2 the inclusions; its source entries are given by the source definitions above.

\begin{table}[htbp]
\centering
\caption{Phase parameters, scale ratios, and source terms for the standard tests.}
\label{tab:parameters}
\normalsize
\begin{tabular*}{\textwidth}{@{\extracolsep{\fill}}llrrrrrrrr@{}}
\toprule
Case & Phase & $\kappa_1$ & $\kappa_2$ & $Q_1$ & $Q_2$ & $c_1$ & $c_2$ & $\eps$ & Source\\
\midrule
2D circle  & 1 & 100 & 50   & 20    & 30   & 5   & 4.5 & $1/10$ & $10^5$\\
           & 2 & 1   & 1    & 0.25  & 0.50 & 2   & 1.5 & $1/10$ & $10^5$\\
2D channel & 1 & 50  & 100  & 20    & 30   & 30  & 50  & $1/8$  & $10^4$\\
           & 2 & 5   & 4    & 0.25  & 0.25 & 10  & 25  & $1/8$  & $10^4$\\
2D ellipses & 1 & $10^4$ & 100 & 20 & 20 & 10 & 5 & $1/8$ & $q_l(t)$\\
            & 2 & 1 & 1 & 2.5 & 2.5 & 1 & 1 & $1/8$ & $q_l(t)$\\
3D sphere  & 1 & 800 & 1000 & 250   & 300  & 5   & 125 & $1/5$  & $10^5$\\
           & 2 & 30  & 50   & 25    & 30   & 100 & 15  & $1/5$  & $10^5$\\
3D channel & 1 & 800 & 1000 & 250   & 300  & 5   & 125 & $1/4$  & $10^5$\\
           & 2 & 30  & 50   & 25    & 30   & 100 & 15  & $1/4$  & $10^5$\\
\bottomrule
\end{tabular*}
\end{table}

\begin{table}[htbp]
\centering
\caption{Mesh statistics for the standard multi-continuum tests.}
\label{tab:meshes}
\normalsize
\begin{tabular*}{\textwidth}{@{\extracolsep{\fill}}llrrrrrr@{}}
\toprule
Dimension & Geometry & \multicolumn{2}{c}{Fine-scale mesh} & \multicolumn{2}{c}{Cell mesh} & \multicolumn{2}{c}{Homogenized mesh}\\
\cmidrule(lr){3-4}\cmidrule(lr){5-6}\cmidrule(lr){7-8}
& & Elements & Nodes & Elements & Nodes & Elements & Nodes\\
\midrule
2D & circle  & 89800  & 45301  & 878    & 480   & 8590  & 4416\\
2D & channel & 56832  & 28737  & 1946   & 1034  & 15080 & 7701\\
2D & ellipses & 94848 & 47745 & 6018 & 3090 & 8570 & 4406\\
3D & sphere  & 584928 & 97411  & 145815 & 24741 & 93750 & 17576\\
3D & channel & 941440 & 164945 & 84120  & 15582 & 93750 & 17576\\
\bottomrule
\end{tabular*}
\end{table}

Four computational routes are distinguished throughout this section. FEM directly solves the fine-scale coupled system and serves as the reference whenever its memory requirement is admissible. FEM-POD reduces that fine-scale system but still requires fine-scale snapshot generation. HOMS advances the discrete homogenized system and then applies the second-order recovery. HOMS-POD combines the joint vector POD approximation with either Algorithm~1, which repeats HOMS recovery at every output time, or Algorithm~2, which forms corrected spatial and time-derivative bases offline and uses their linear combinations online.

\begin{samepage}
When the fine-scale FEM reference is available, the accuracy of each comparison method is measured componentwise at the discrete times $t_n$. For $l=1,2$, the symbols $E_{L^2,l}^{\mathrm{FP}}(t_n)$, $E_{L^2,l}^{1}(t_n)$, $E_{L^2,l}^{2}(t_n)$, and $E_{L^2,l}^{\mathrm{HP}}(t_n)$ denote the relative $L^2$ errors of the FEM-POD, first-order multiscale (FOMS), second-order HOMS, and HOMS-POD solutions, respectively. Their relative $H^1$-seminorm errors are $E_{H^1,l}^{\mathrm{FP}}(t_n)$, $E_{H^1,l}^{1}(t_n)$, $E_{H^1,l}^{2}(t_n)$, and $E_{H^1,l}^{\mathrm{HP}}(t_n)$. In these symbols, the subscript $H^1$ always refers to the seminorm, not the full norm. To define both measures, let $s\in\{\mathrm{FP},1,2,\mathrm{HP}\}$ and set $u_l^{[s]}(t_n)$ to the corresponding comparison field: $u_l^{[1]}(t_n)=u_l^{(1\eps)}(\cdot,t_n)$, $u_l^{[2]}(t_n)=u_l^{(2\eps)}(\cdot,t_n)$, and $u_l^{[\mathrm{HP}]}(t_n)=u_{l,r}^{\eps,n}$, the latter obtained by either of the equivalent Algorithms~1 and~2. The field $u_l^{[\mathrm{FP}]}(t_n)$ is the fine-scale FEM-POD solution. Then
\begin{align}
E_{L^2,l}^{s}(t_n)
&=\frac{\norm{u_l^{[s]}(t_n)-u_{l,{\rm FEM}}(t_n)}_{L^2(\Omega)}}
{\norm{u_{l,{\rm FEM}}(t_n)}_{L^2(\Omega)}},
\label{eq:l2-relative-error}\\
E_{H^1,l}^{s}(t_n)
&=\frac{\left|u_l^{[s]}(t_n)-u_{l,{\rm FEM}}(t_n)\right|_{H^1(\Omega)}}
{\left|u_{l,{\rm FEM}}(t_n)\right|_{H^1(\Omega)}}.
\label{eq:h1-semi-relative-error}
\end{align}
\end{samepage}
Thus, both time-dependent measures compare each method with the same fine-scale FEM reference. Equation~\eqref{eq:l2-relative-error} measures the relative field error in $L^2(\Omega)$, whereas \eqref{eq:h1-semi-relative-error} measures only the relative gradient error through the $H^1(\Omega)$ seminorm. Since $T=t_{N_t}$, the tables report the final-time values of these errors.

\subsection{POD reduction and accuracy}
\label{subsec:pod-accuracy}

This subsection examines snapshot sensitivity, mode selection, FEM-referenced accuracy, and representative recovered fields within a single numerical study. The retained rank is selected by the cumulative-energy criterion \eqref{eq:pod-energy}, whereas the number of snapshots controls the amount of training data and the offline cost.

Table~\ref{tab:snapshot} compares small and large training ensembles for Algorithm~1. The reported relative $L^2$ error is $E_{L^2}^{\max}(T):=\max_{l=1,2}E_{L^2,l}^{\mathrm{HP}}(T)$, and the reported relative $H^1$-seminorm error is $E_{H^1}^{\max}(T):=\max_{l=1,2}E_{H^1,l}^{\mathrm{HP}}(T)$. Both are the larger of the two continuum-component errors at the final time. The CPU times are the recorded end-to-end Algorithm~1 totals, including snapshot generation and recovery, for the corresponding snapshot counts. The small sets give errors comparable to the larger sets and reduce snapshot-generation cost. Therefore, five snapshots are used for the two-dimensional circular and channel tests, eight for the three-ellipse test, and ten in three dimensions. The retained rank is still determined independently by the cumulative-energy criterion \eqref{eq:pod-energy}.

\begin{table}[htbp]
\centering
\caption{Snapshot sensitivity of Algorithm~1.}
\label{tab:snapshot}
\resizebox{\textwidth}{!}{%
\begin{tabular}{llccccc}
\toprule
Dimension & Geometry & $N_s$ (low/high) & $E_{L^2}^{\max}(T)$ & $E_{H^1}^{\max}(T)$ & CPU time (s) & Selected $N_s$\\
\midrule
2D & circle  & $5/15$  & $0.08900/0.08625$ & $0.0928/0.0909$ & $133.069/184.575$ & 5\\
2D & channel & $5/15$  & $0.10405/0.10479$ & $0.1075/0.1051$ & $85.561/157.027$ & 5\\
2D & ellipses & $8/15$ & $0.104503/0.104504$ & $0.113408/0.113406$ & $148.216/193.345$ & 8\\
3D & sphere  & $10/50$ & $0.02357/0.03063$ & $0.1371/0.1354$ & $2599.200/2803.470$ & 10\\
3D & channel & $10/25$ & $0.02414/0.02478$ & $0.1664/0.1695$ & $7312.000/7313.900$ & 10\\
\bottomrule
\end{tabular}}
\end{table}

Increasing the snapshot count does not produce a systematic decrease in the FEM-referenced error: the changes are small and, in the three-dimensional spherical case, the larger ensemble even gives a slightly larger final $L^2$ error. For the three-ellipse test, increasing $N_s$ from eight to fifteen changes the maximum final $L^2$ error from $0.104503$ to $0.104504$ and the maximum $H^1$-seminorm error from $0.113408$ to $0.113406$, while increasing the Algorithm~1 time from $148.216$ to $193.345$ seconds. Over the tested range, the observed FEM-relative error is therefore not controlled primarily by snapshot insufficiency. The smaller ensembles are retained because they reduce training cost while preserving comparable accuracy; in the three-dimensional channel case, the end-to-end difference is small because output recovery dominates the runtime. The archived sensitivity summaries do not contain the selected ranks or eigenvalue sequences, so no rank or spectrum is reconstructed from these aggregate values.

Table~\ref{tab:errors} gives the final-time FEM-referenced relative errors for corrected-basis reconstruction, denoted by $E_{L^2,l}^{\mathrm{HP}}(T)$ and $E_{H^1,l}^{\mathrm{HP}}(T)$. These values assess the complete chain from homogenization through POD evolution to high-order recovery using the definitions in \eqref{eq:l2-relative-error} and \eqref{eq:h1-semi-relative-error}.

\begin{table}[htbp]
\centering
\caption{FEM-referenced relative errors at $T=1$ for Algorithm~2.}
\label{tab:errors}
\normalsize
\begin{tabular*}{\textwidth}{@{\extracolsep{\fill}}lllcccc@{}}
\toprule
Dimension & Geometry & $N_s$ & $E_{L^2,1}^{\mathrm{HP}}(T)$ & $E_{L^2,2}^{\mathrm{HP}}(T)$ & $E_{H^1,1}^{\mathrm{HP}}(T)$ & $E_{H^1,2}^{\mathrm{HP}}(T)$\\
\midrule
2D & circle  & 5  & $7.592{\times}10^{-2}$ & $8.900{\times}10^{-2}$ & $8.372{\times}10^{-2}$ & $9.277{\times}10^{-2}$\\
2D & channel & 5  & $1.040{\times}10^{-1}$ & $8.294{\times}10^{-2}$ & $1.074{\times}10^{-1}$ & $8.722{\times}10^{-2}$\\
2D & ellipses & 8 & $8.663{\times}10^{-2}$ & $1.045{\times}10^{-1}$ & $8.255{\times}10^{-2}$ & $1.134{\times}10^{-1}$\\
3D & sphere  & 10 & $3.324{\times}10^{-2}$ & $1.345{\times}10^{-2}$ & $1.366{\times}10^{-1}$ & $1.197{\times}10^{-1}$\\
3D & channel & 10 & $2.656{\times}10^{-2}$ & $2.131{\times}10^{-2}$ & $1.683{\times}10^{-1}$ & $1.495{\times}10^{-1}$\\
\bottomrule
\end{tabular*}
\end{table}

The final-time $L^2$ relative errors of Algorithm~2 remain below approximately $10.5\%$ in all five standard tests. The relative $H^1$-seminorm errors also assess the gradients, which are sensitive to phase interfaces and recovered microscopic oscillations. Figures~\ref{fig:error-circle2d}--\ref{fig:error-channel3d} and Figure~\ref{fig:error-complex2d} show the time histories of $E_{L^2,l}^{s}(t_n)$ and $E_{H^1,l}^{s}(t_n)$ for all four comparison methods and both continuum components. In each figure, the upper row concerns continuum~1, the lower row continuum~2, and the left and right panels display the $L^2$-norm and $H^1$-seminorm errors, respectively. In the plot legends, $t$ denotes the sampled time $t_n$. Every curve is evaluated against the same fine-scale FEM reference using \eqref{eq:l2-relative-error} or \eqref{eq:h1-semi-relative-error}. The FOMS curves indicate how the second-order correctors affect each error measure. The second-order HOMS and HOMS-POD curves remain close for both continuum components.

The three-ellipse histories in Figure~\ref{fig:error-complex2d} exhibit a time-dependent response to the prescribed continuum-dependent sources. For continuum~1, second-order HOMS reduces the final $L^2$ error from $42.932\%$ for FOMS to $8.665\%$ and the $H^1$-seminorm error from $98.638\%$ to $8.253\%$. For continuum~2, the corresponding $H^1$-seminorm error decreases from $29.759\%$ to $11.340\%$, whereas the first- and second-order $L^2$ errors are comparable ($10.341\%$ and $10.450\%$). Thus, the benefit of the second-order terms depends on the continuum and error measure. The final HOMS-POD errors are $8.663\%$ and $10.450\%$ in $L^2$, and $8.255\%$ and $11.341\%$ in the $H^1$ seminorm. The near-coincidence of the HOMS and HOMS-POD curves shows that the reduced approximation preserves the second-order recovered accuracy even for the high-contrast, differently oriented inclusions and time-dependent forcing. The saved Algorithm~1 and~2 error histories agree to the precision of the exported data.

\begin{figure}[htbp]
\centering
\includegraphics[width=0.75\textwidth]{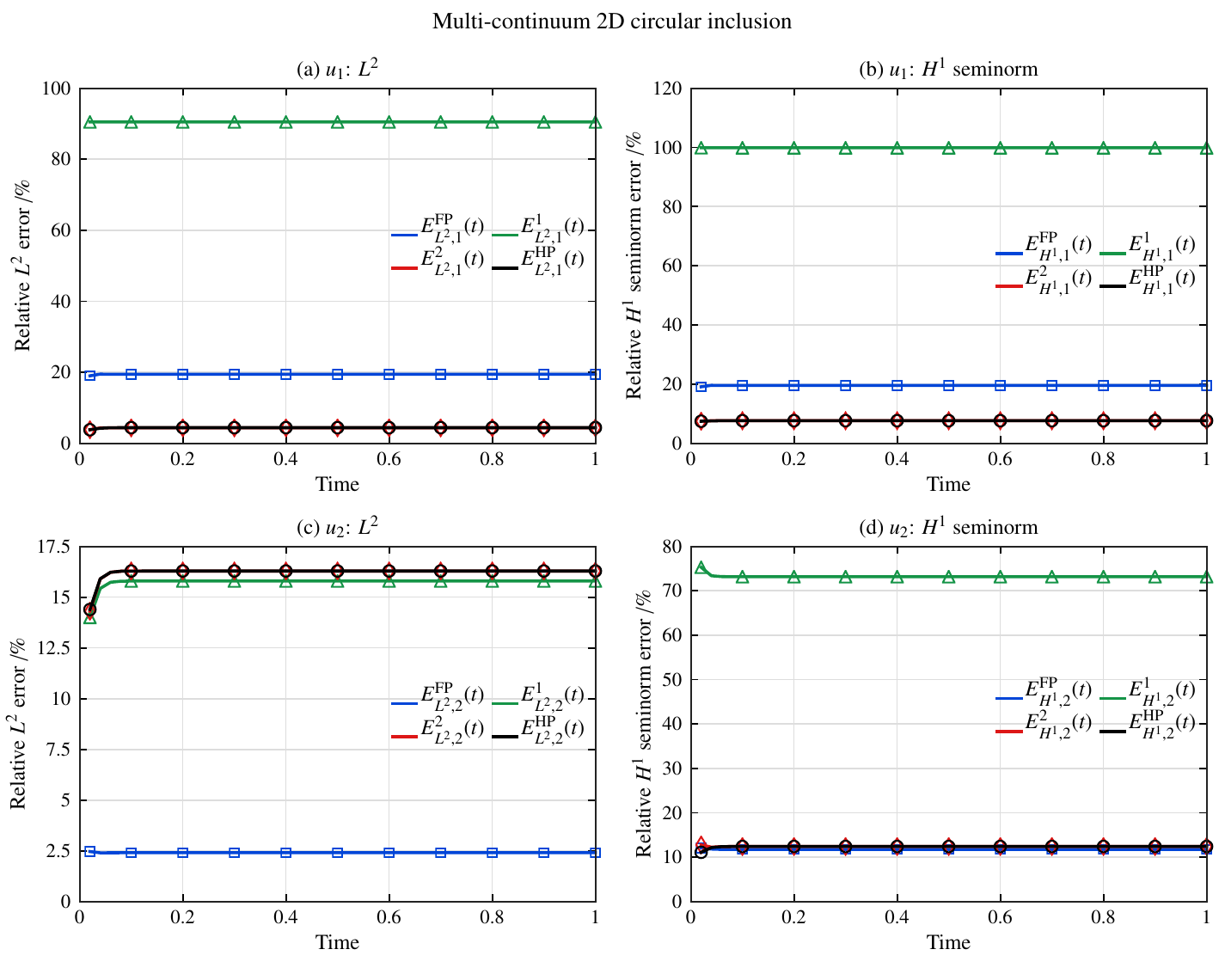}
\caption{Relative-error evolution for the two-dimensional circular-inclusion test: (a) $E_{L^2,1}^{s}(t)$; (b) $E_{H^1,1}^{s}(t)$; (c) $E_{L^2,2}^{s}(t)$; (d) $E_{H^1,2}^{s}(t)$.}
\label{fig:error-circle2d}
\end{figure}

\begin{figure}[htbp]
\centering
\includegraphics[width=0.75\textwidth]{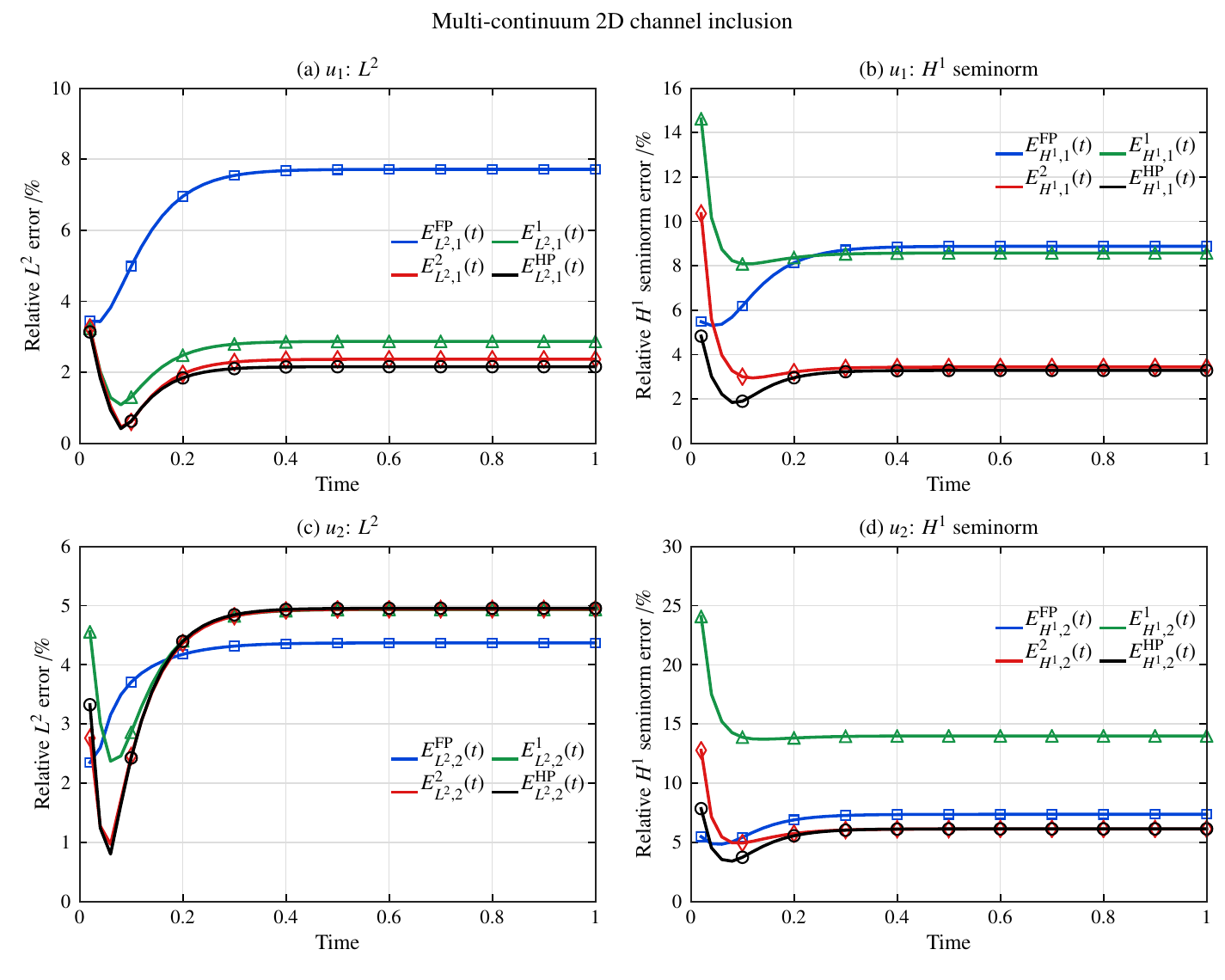}
\caption{Relative-error evolution for the two-dimensional channel-inclusion test: (a) $E_{L^2,1}^{s}(t)$; (b) $E_{H^1,1}^{s}(t)$; (c) $E_{L^2,2}^{s}(t)$; (d) $E_{H^1,2}^{s}(t)$.}
\label{fig:error-channel2d}
\end{figure}

\begin{figure}[htbp]
\centering
\includegraphics[width=0.75\textwidth]{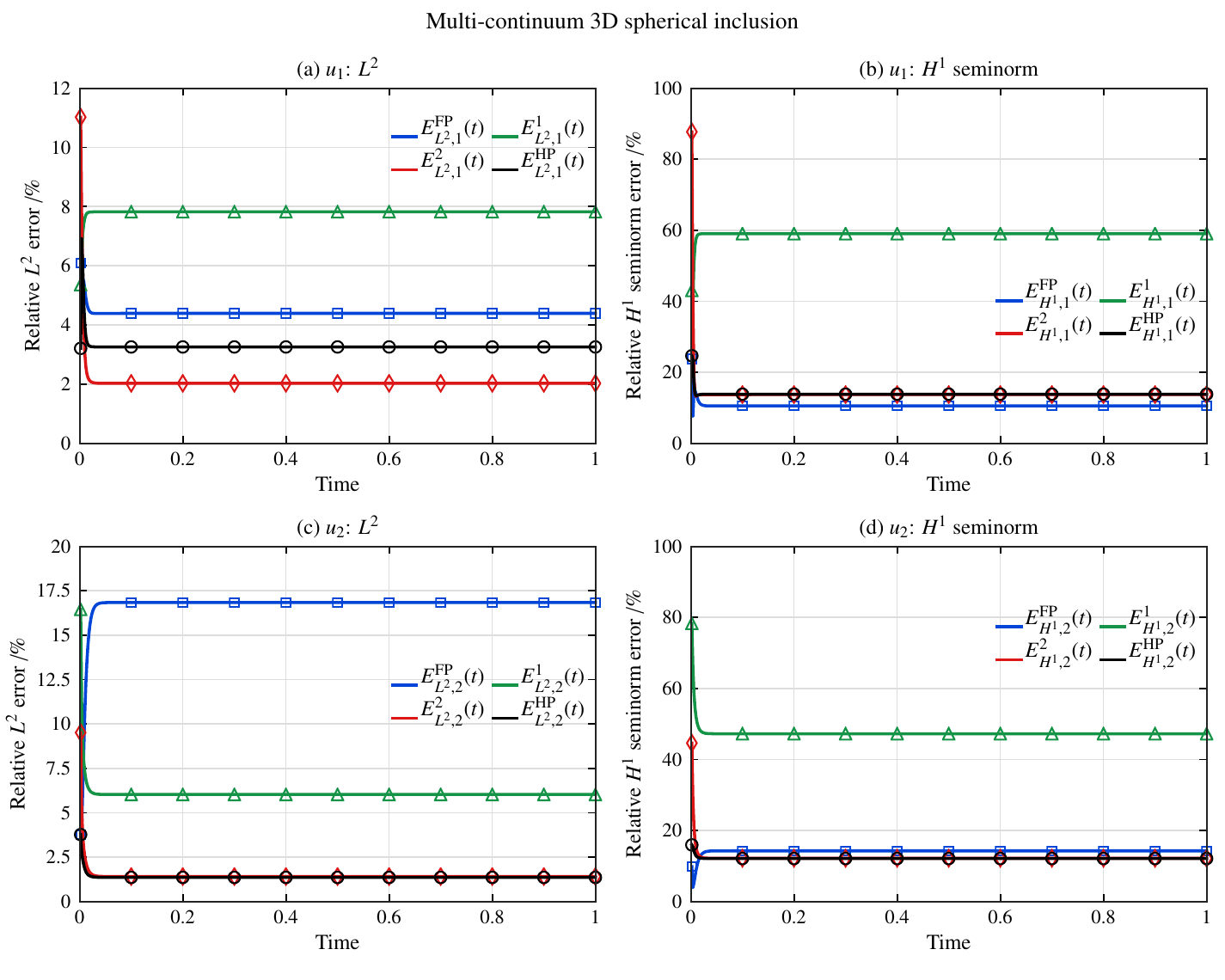}
\caption{Relative-error evolution for the three-dimensional spherical-inclusion test: (a) $E_{L^2,1}^{s}(t)$; (b) $E_{H^1,1}^{s}(t)$; (c) $E_{L^2,2}^{s}(t)$; (d) $E_{H^1,2}^{s}(t)$.}
\label{fig:error-sphere3d}
\end{figure}

\begin{figure}[htbp]
\centering
\includegraphics[width=0.75\textwidth]{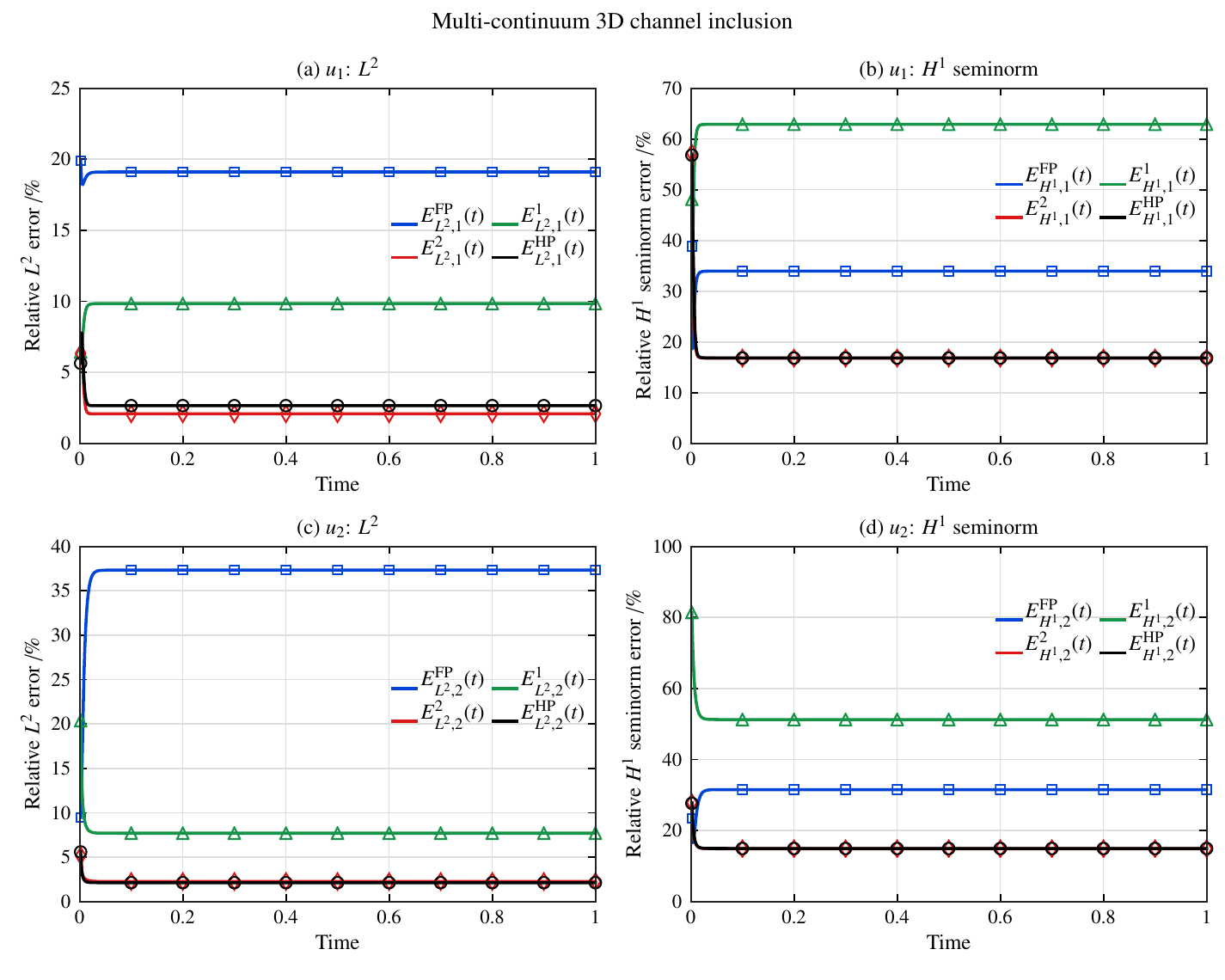}
\caption{Relative-error evolution for the three-dimensional channel-inclusion test: (a) $E_{L^2,1}^{s}(t)$; (b) $E_{H^1,1}^{s}(t)$; (c) $E_{L^2,2}^{s}(t)$; (d) $E_{H^1,2}^{s}(t)$.}
\label{fig:error-channel3d}
\end{figure}

\begin{figure}[htbp]
\centering
\includegraphics[width=0.75\textwidth]{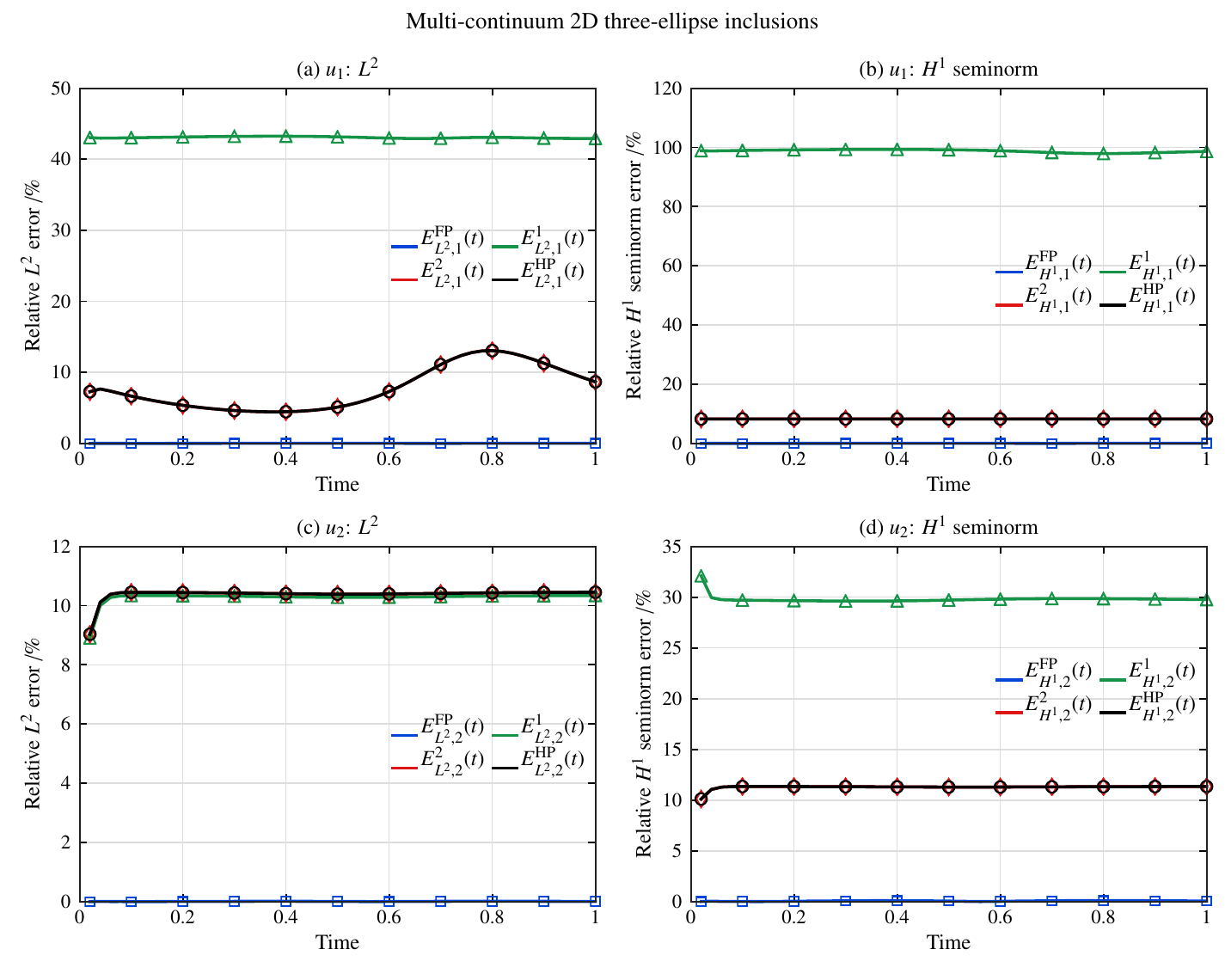}
\caption{Relative-error evolution for the two-dimensional three-ellipse test: (a) $E_{L^2,1}^{s}(t)$; (b) $E_{H^1,1}^{s}(t)$; (c) $E_{L^2,2}^{s}(t)$; (d) $E_{H^1,2}^{s}(t)$.}
\label{fig:error-complex2d}
\end{figure}

Figures~\ref{fig:circle2d-u1-fields}--\ref{fig:channel3d-u2-fields}, together with Figures~\ref{fig:complex2d-u1-fields} and~\ref{fig:complex2d-u2-fields}, compare FEM, FEM-POD, FOMS, second-order HOMS, and POD-HOMS for both continuum fields in the five configurations. Each figure arranges the five solutions in a single row and is devoted to one continuum, following the presentation used for the two continuum fields in the previous HOMS study. In these field plots, $u_l^\eps$ is approximated by the fine-scale finite-element reference solution $u_{l,{\rm FEM}}(T)$; the caption notation follows the original fine-scale system \eqref{eq:fine-system} and does not identify the numerical reference with the exact solution used in the analysis. Here $u_l^{[\mathrm{FP}]}(T)$ denotes the fine-scale FEM-POD field, $u_l^{(1\eps)}$ the first-order truncation of \eqref{eq:homs-representation}, $u_l^{(2\eps)}$ the full second-order reconstruction, and $u_{l,r}^{\eps,n}$ the recovered POD-HOMS field at $t_n=T$. The purpose is not to revalidate HOMS itself, but to check that introducing POD preserves the macroscopic envelope and the local oscillatory structures already recovered by HOMS.

\begin{figure}[htbp]
\centering
\begin{tabular}{@{}ccccc@{}}
\includegraphics[width=0.17\textwidth]{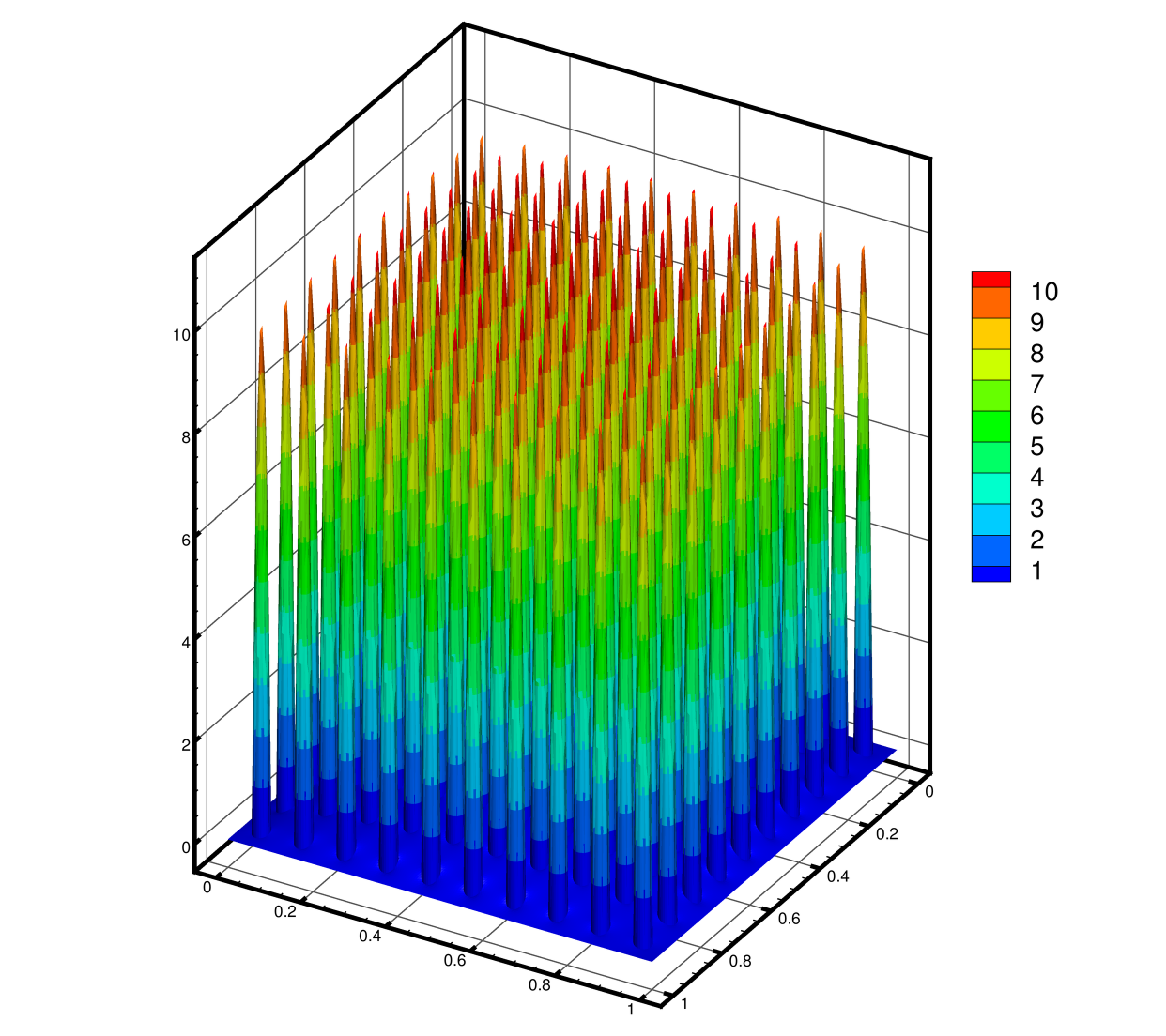} &
\includegraphics[width=0.17\textwidth]{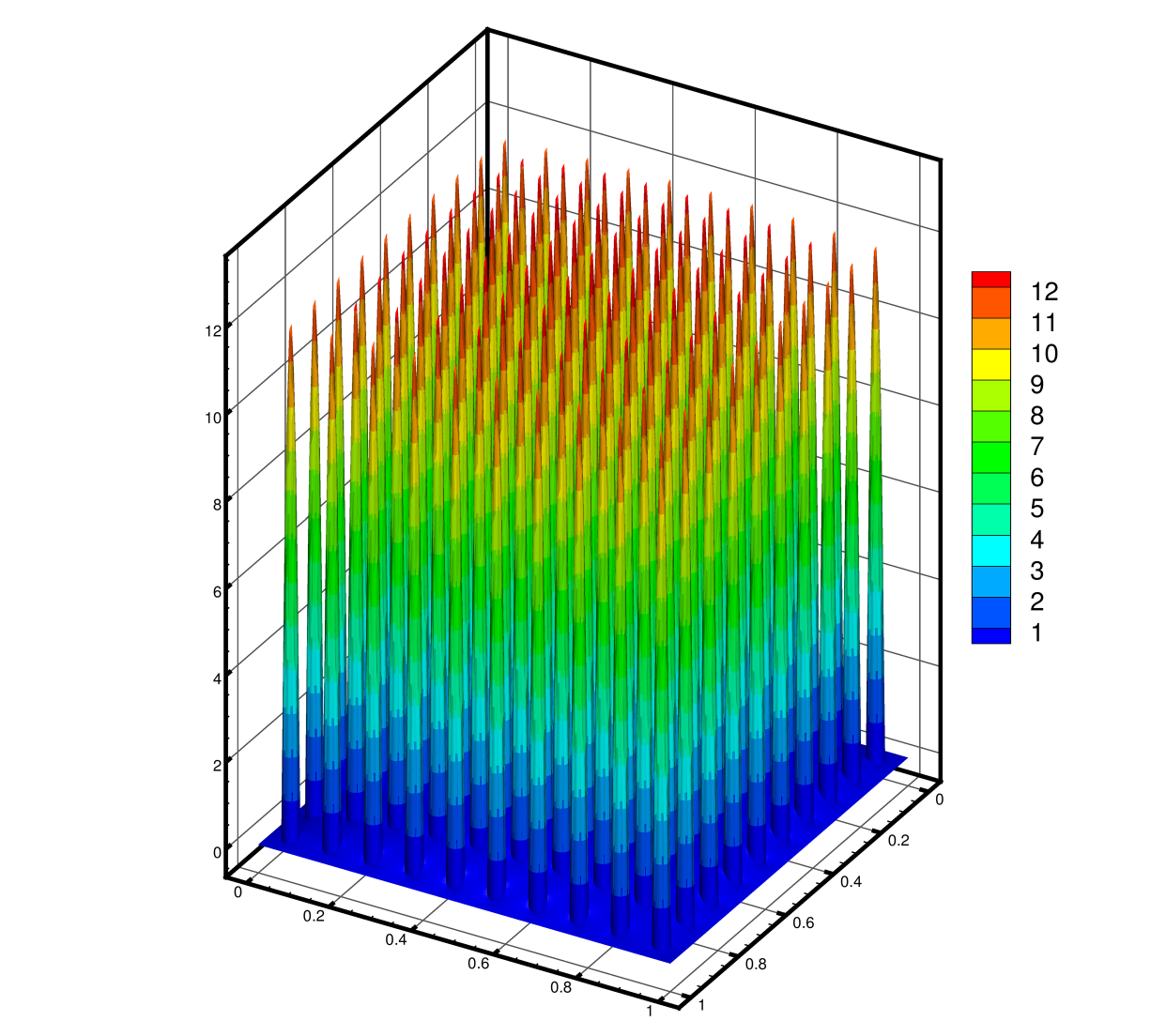} &
\includegraphics[width=0.17\textwidth]{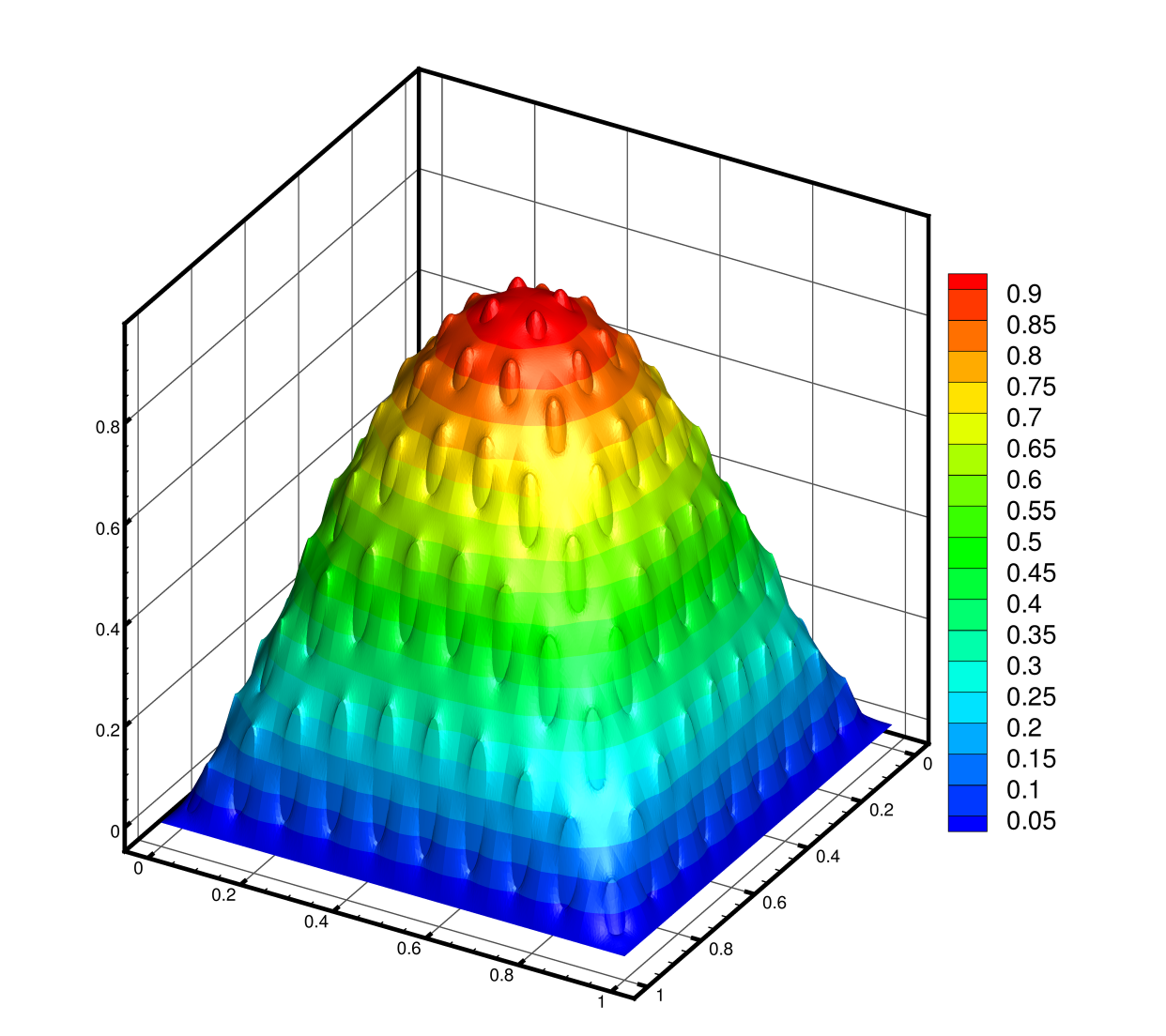} &
\includegraphics[width=0.17\textwidth]{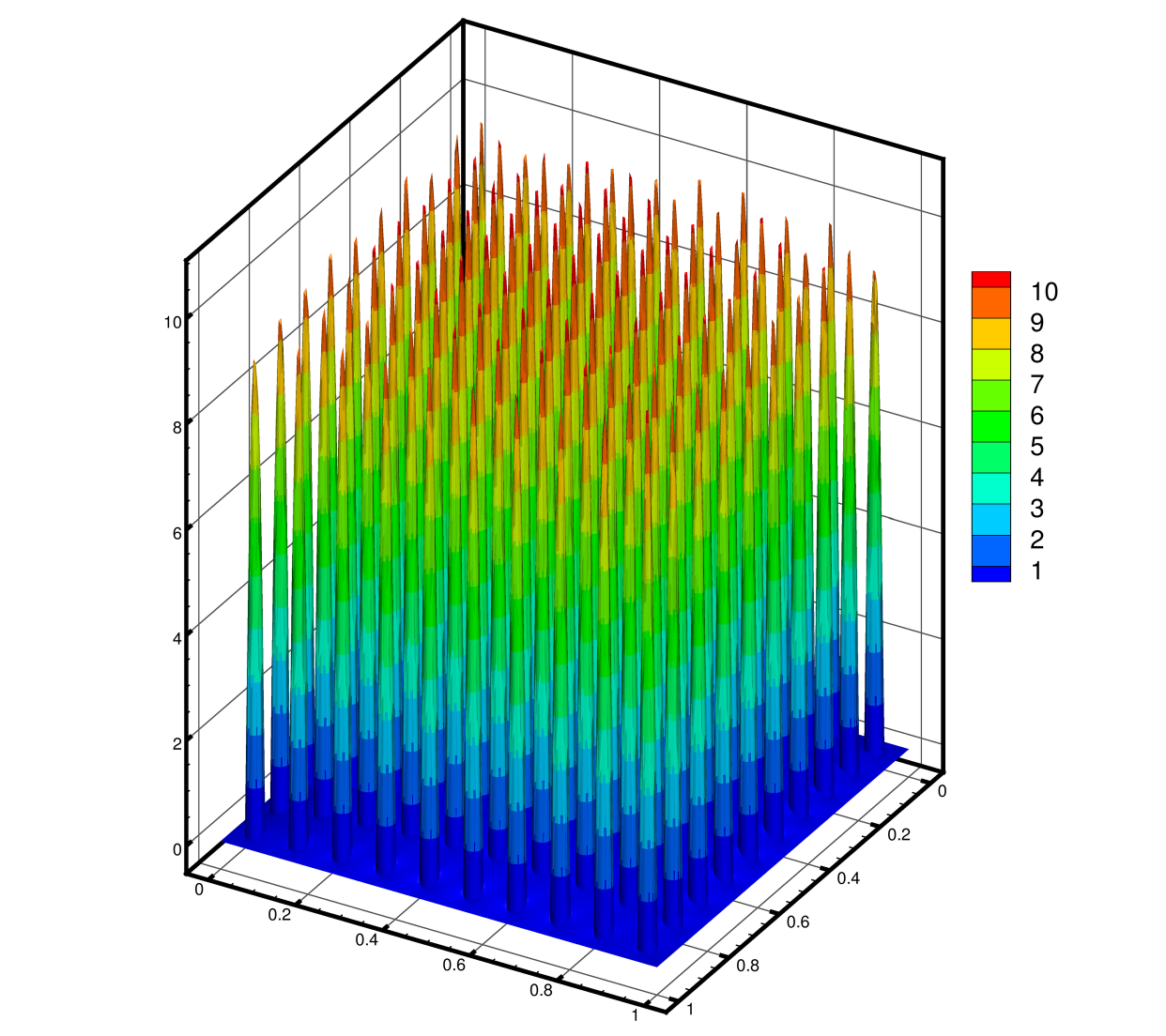} &
\includegraphics[width=0.17\textwidth]{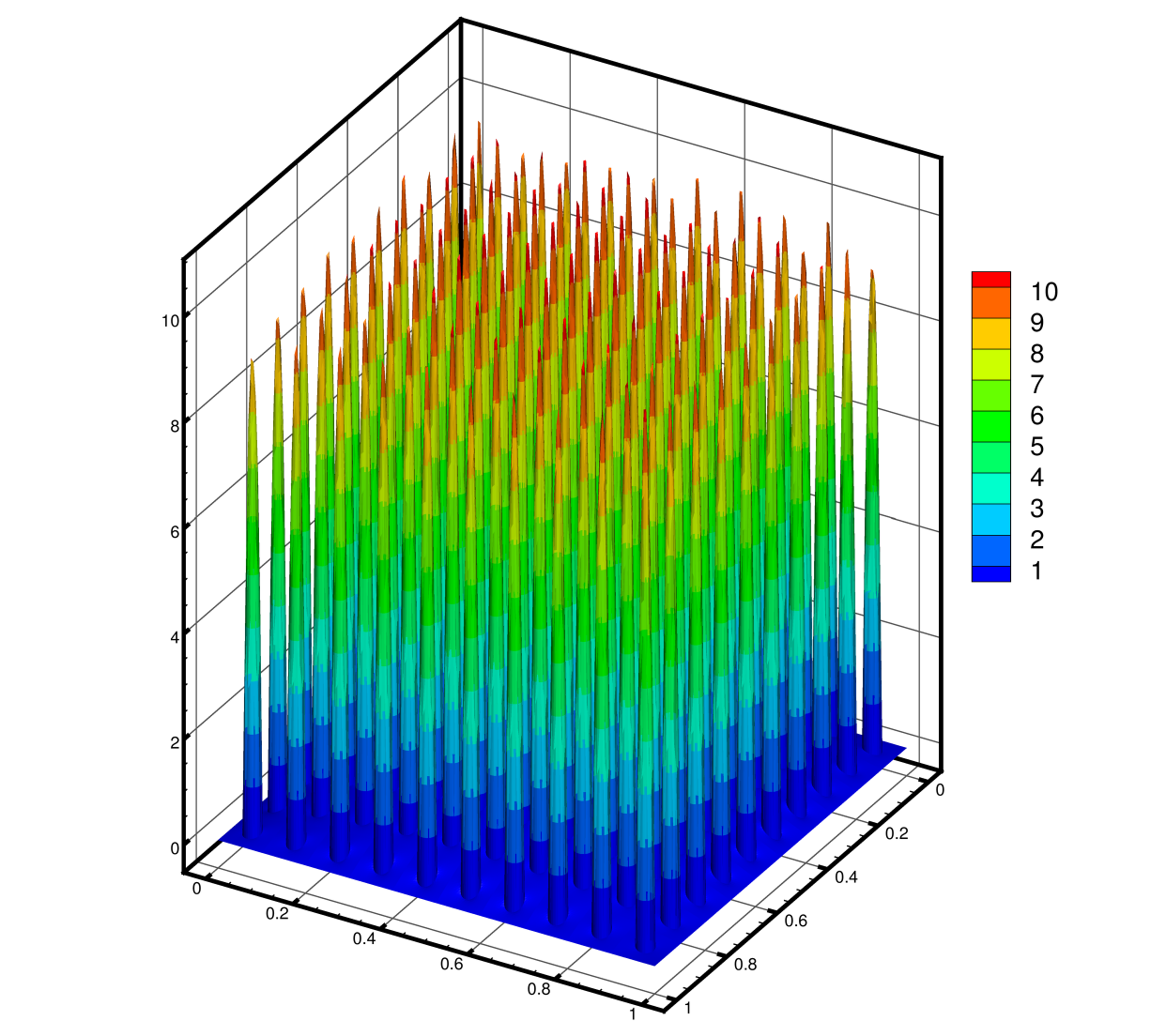}\\
(a) & (b) & (c) & (d) & (e)
\end{tabular}
\caption{Continuum 1 pressure fields for the two-dimensional circular-inclusion problem at $T=1$: (a) $u_1^\eps$; (b) $u_1^{[\mathrm{FP}]}(T)$; (c) $u_1^{(1\eps)}$; (d) $u_1^{(2\eps)}$; (e) $u_{1,r}^{\eps,n}$.}
\label{fig:circle2d-u1-fields}
\end{figure}

\begin{figure}[htbp]
\centering
\begin{tabular}{@{}ccccc@{}}
\includegraphics[width=0.17\textwidth]{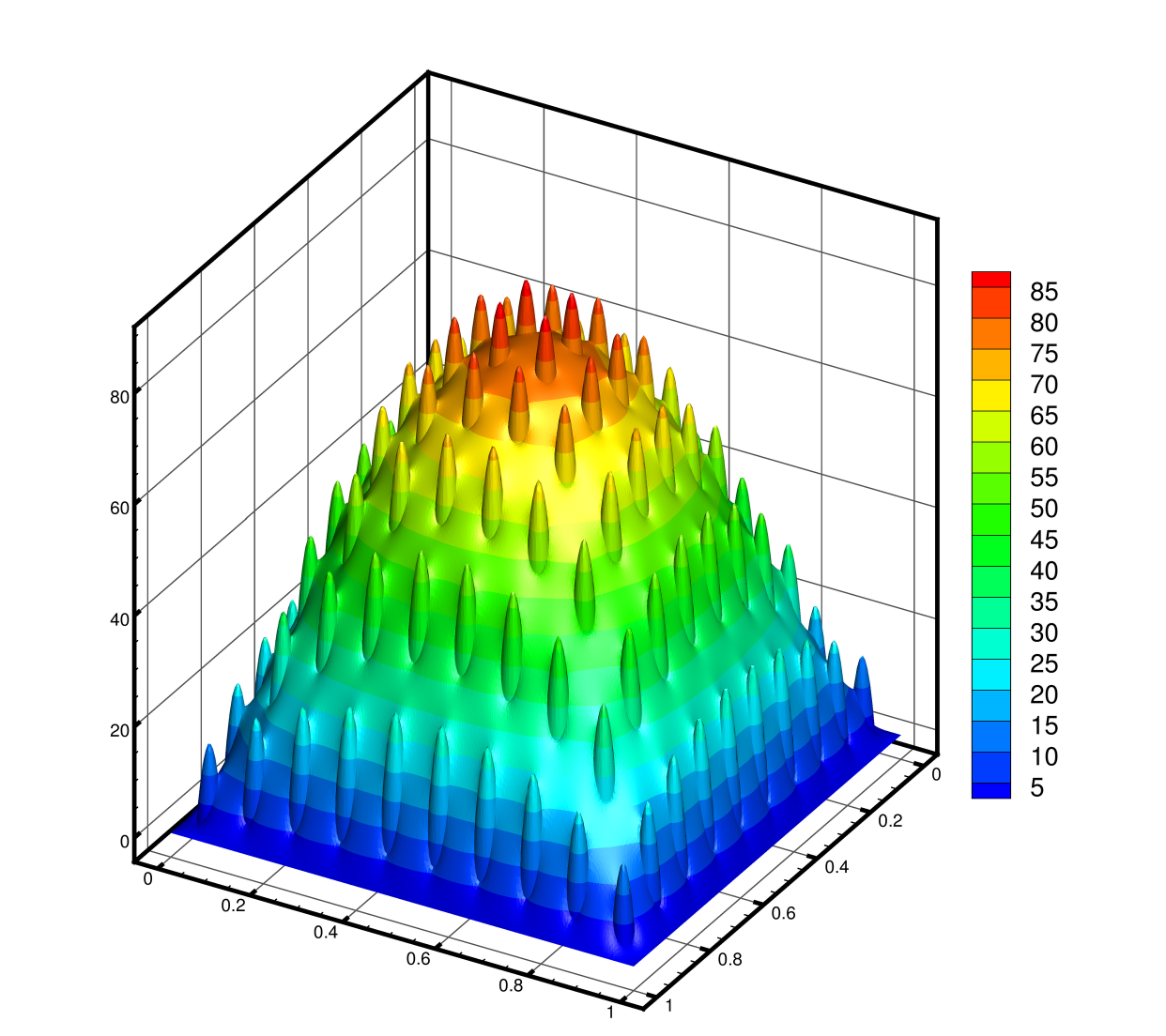} &
\includegraphics[width=0.17\textwidth]{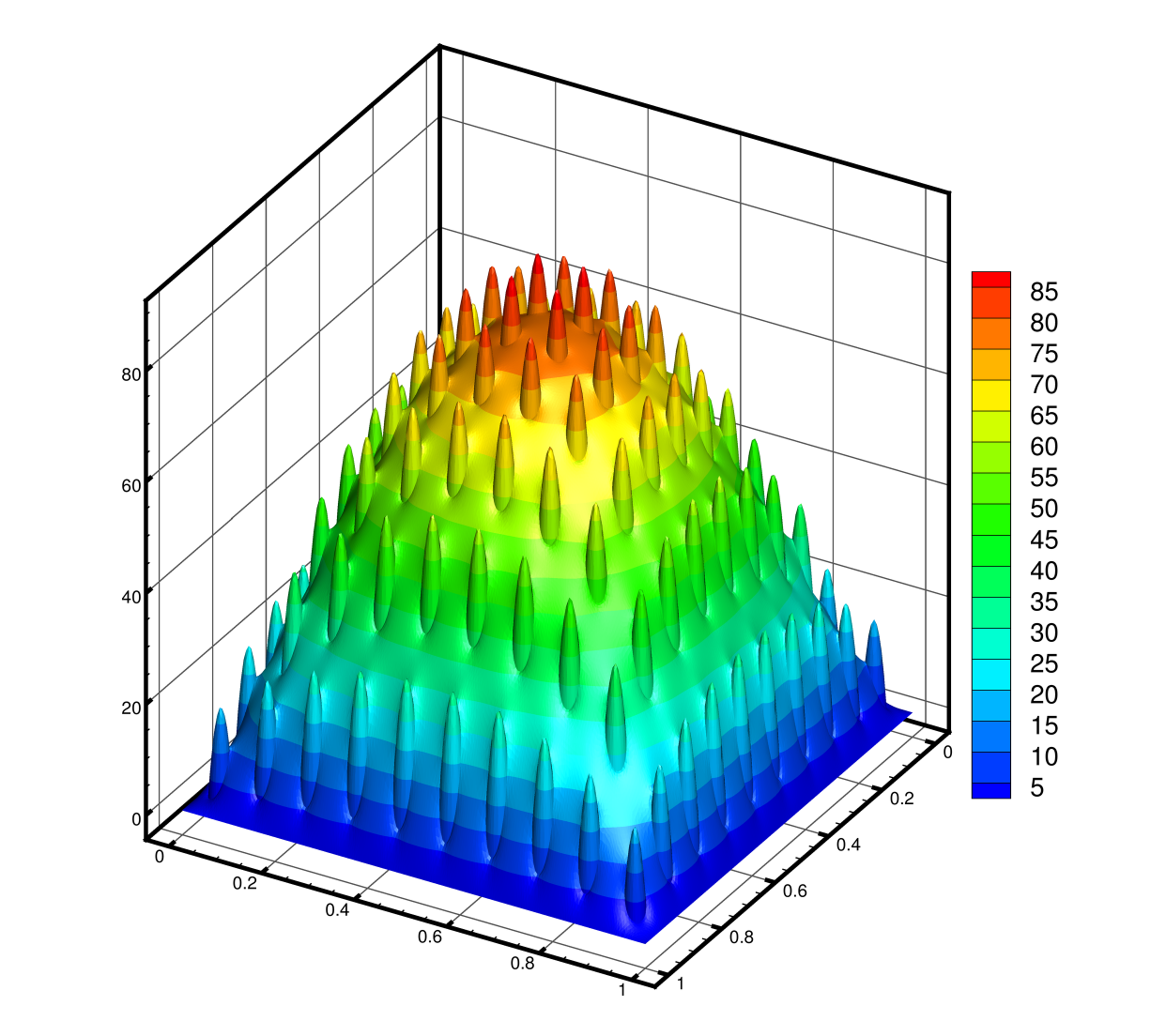} &
\includegraphics[width=0.17\textwidth]{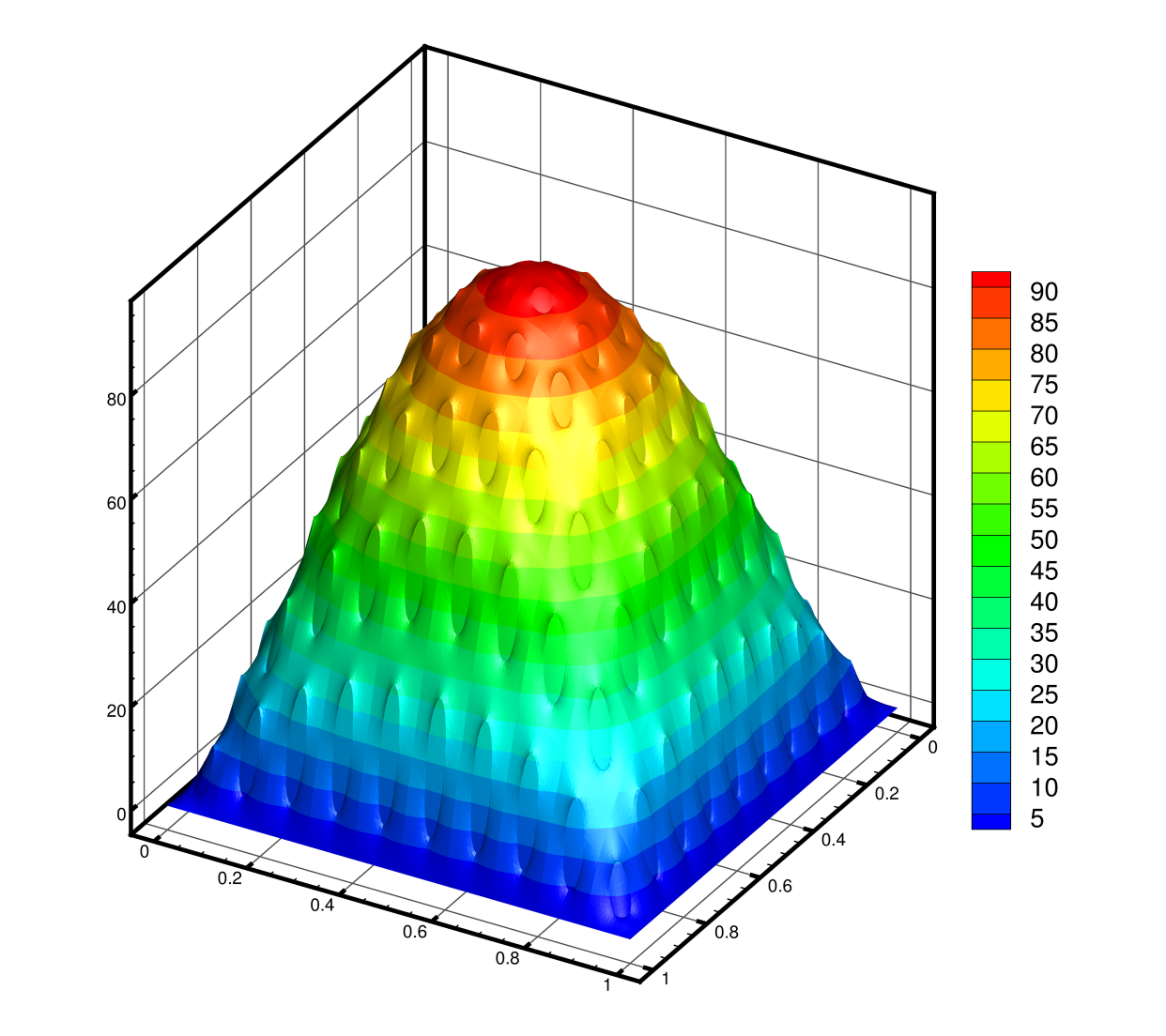} &
\includegraphics[width=0.17\textwidth]{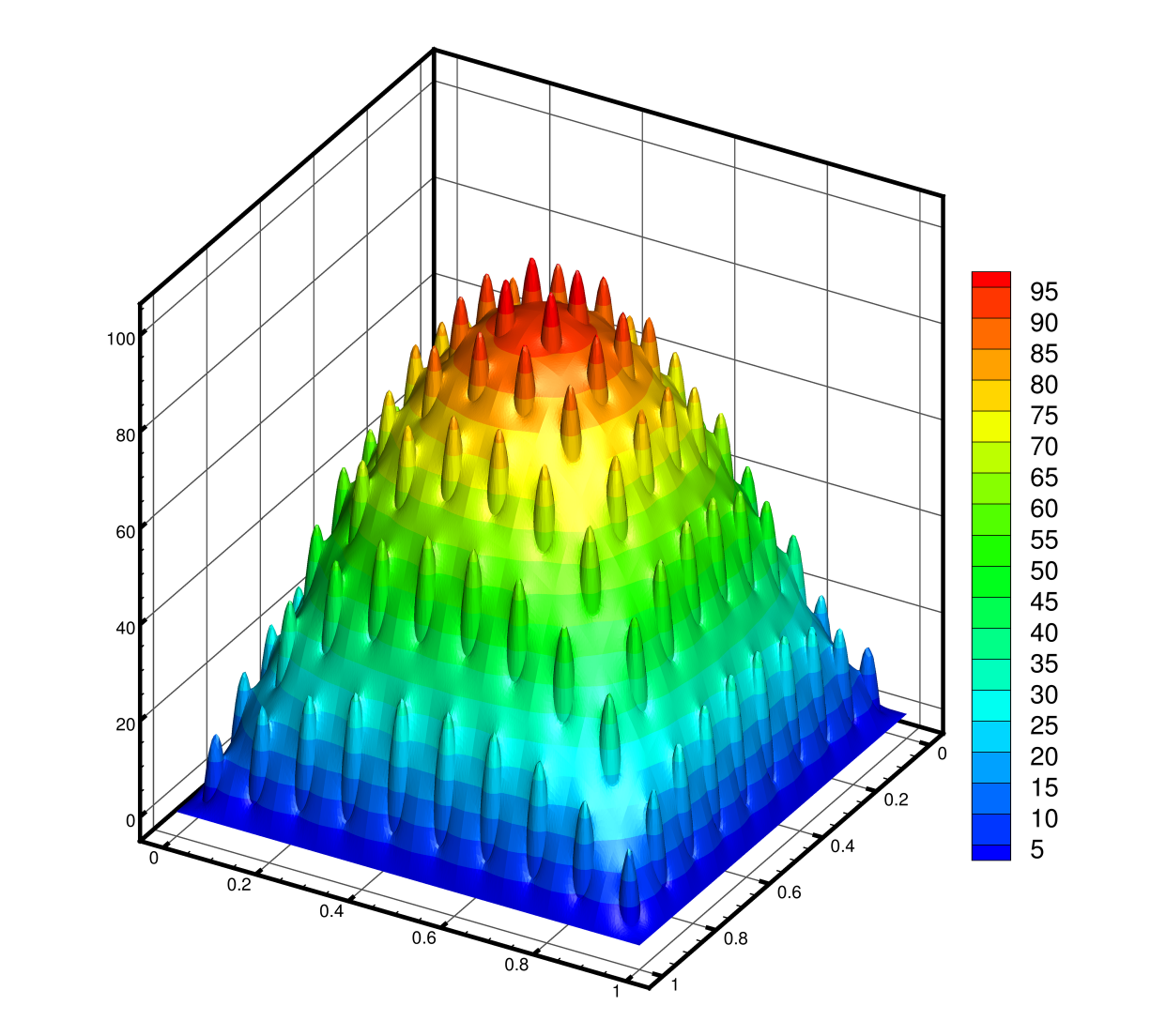} &
\includegraphics[width=0.17\textwidth]{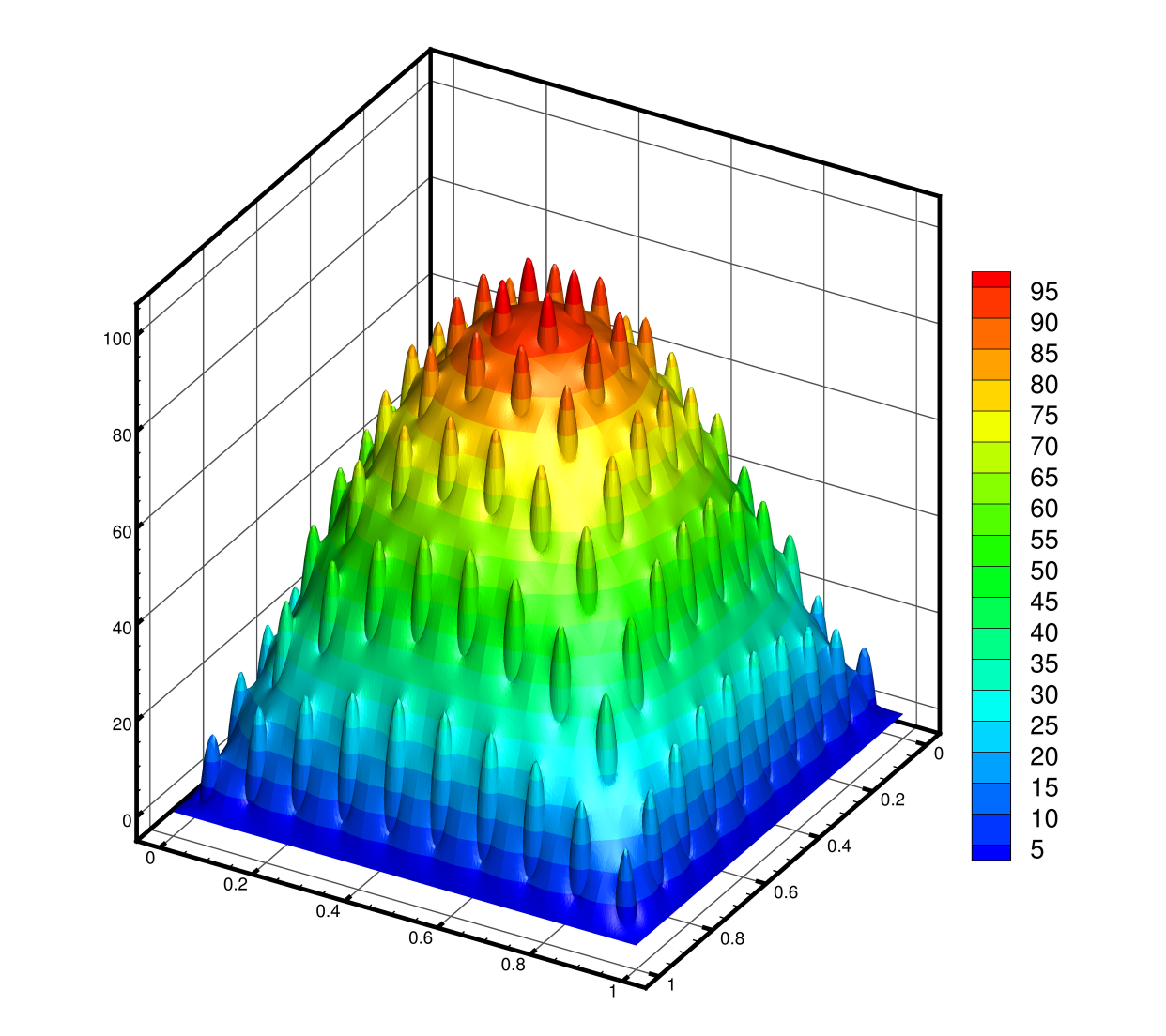}\\
(a) & (b) & (c) & (d) & (e)
\end{tabular}
\caption{Continuum 2 pressure fields for the two-dimensional circular-inclusion problem at $T=1$: (a) $u_2^\eps$; (b) $u_2^{[\mathrm{FP}]}(T)$; (c) $u_2^{(1\eps)}$; (d) $u_2^{(2\eps)}$; (e) $u_{2,r}^{\eps,n}$.}
\label{fig:circle2d-u2-fields}
\end{figure}

\begin{figure}[htbp]
\centering
\begin{tabular}{@{}ccccc@{}}
\includegraphics[width=0.17\textwidth]{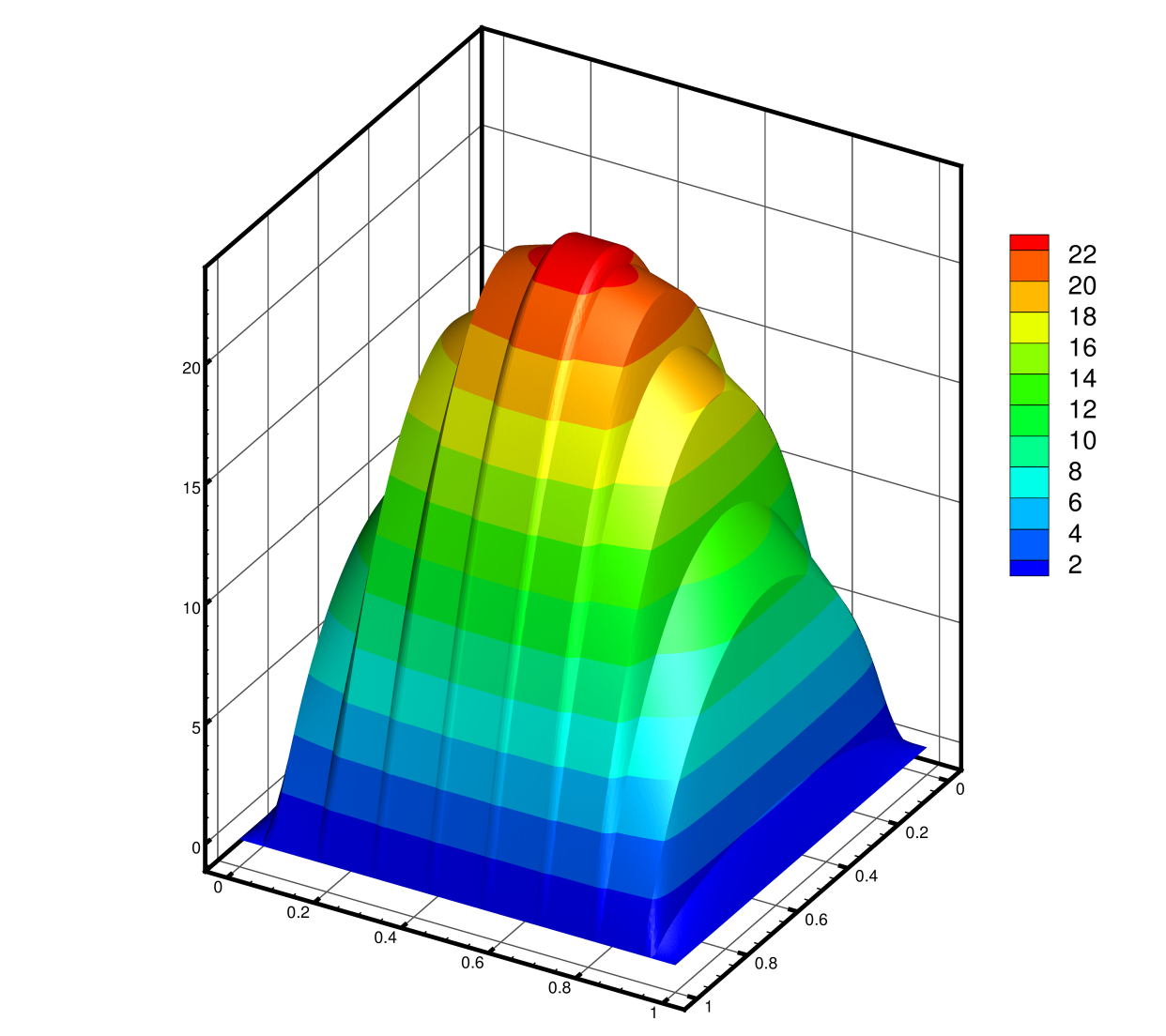} &
\includegraphics[width=0.17\textwidth]{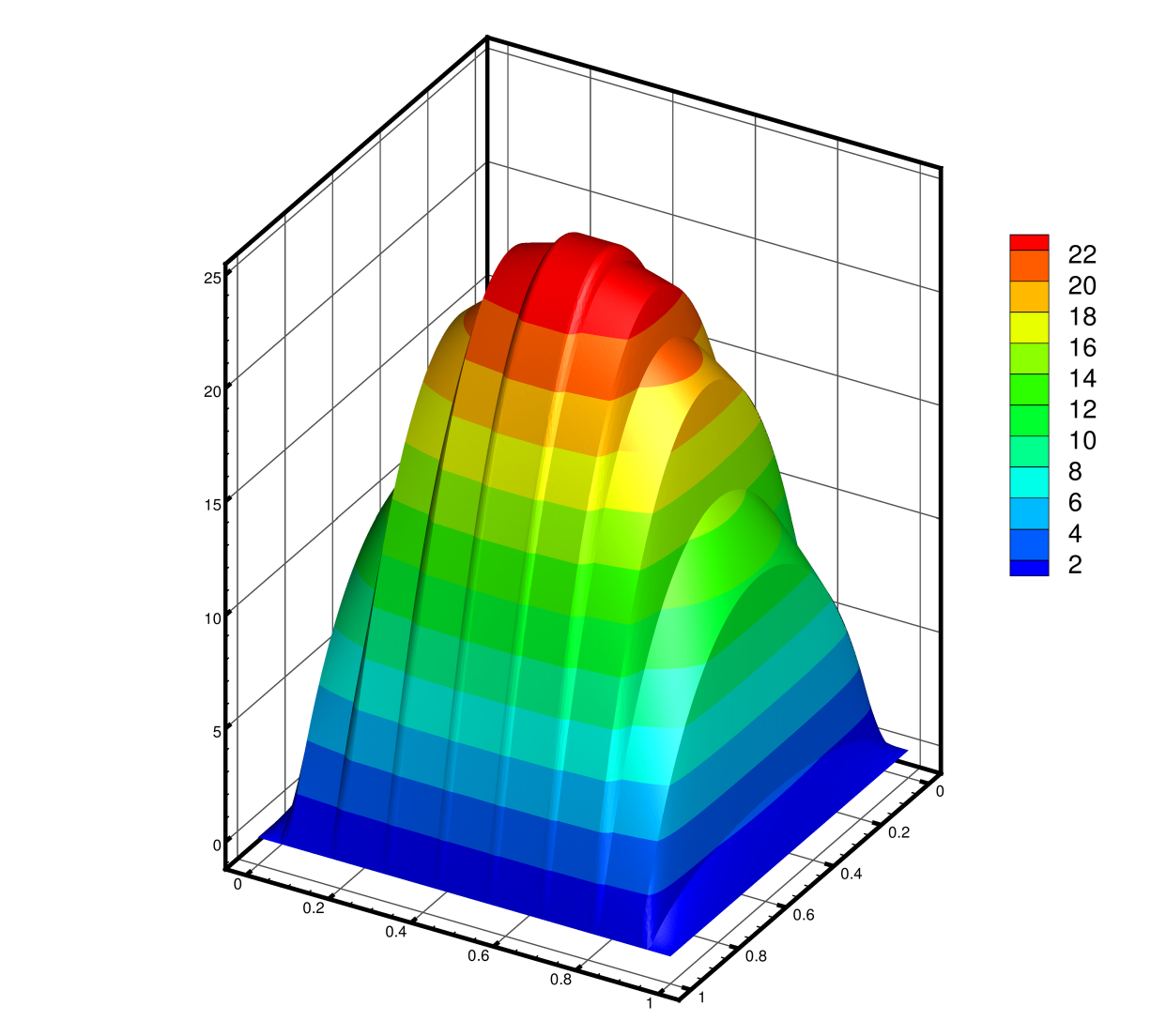} &
\includegraphics[width=0.17\textwidth]{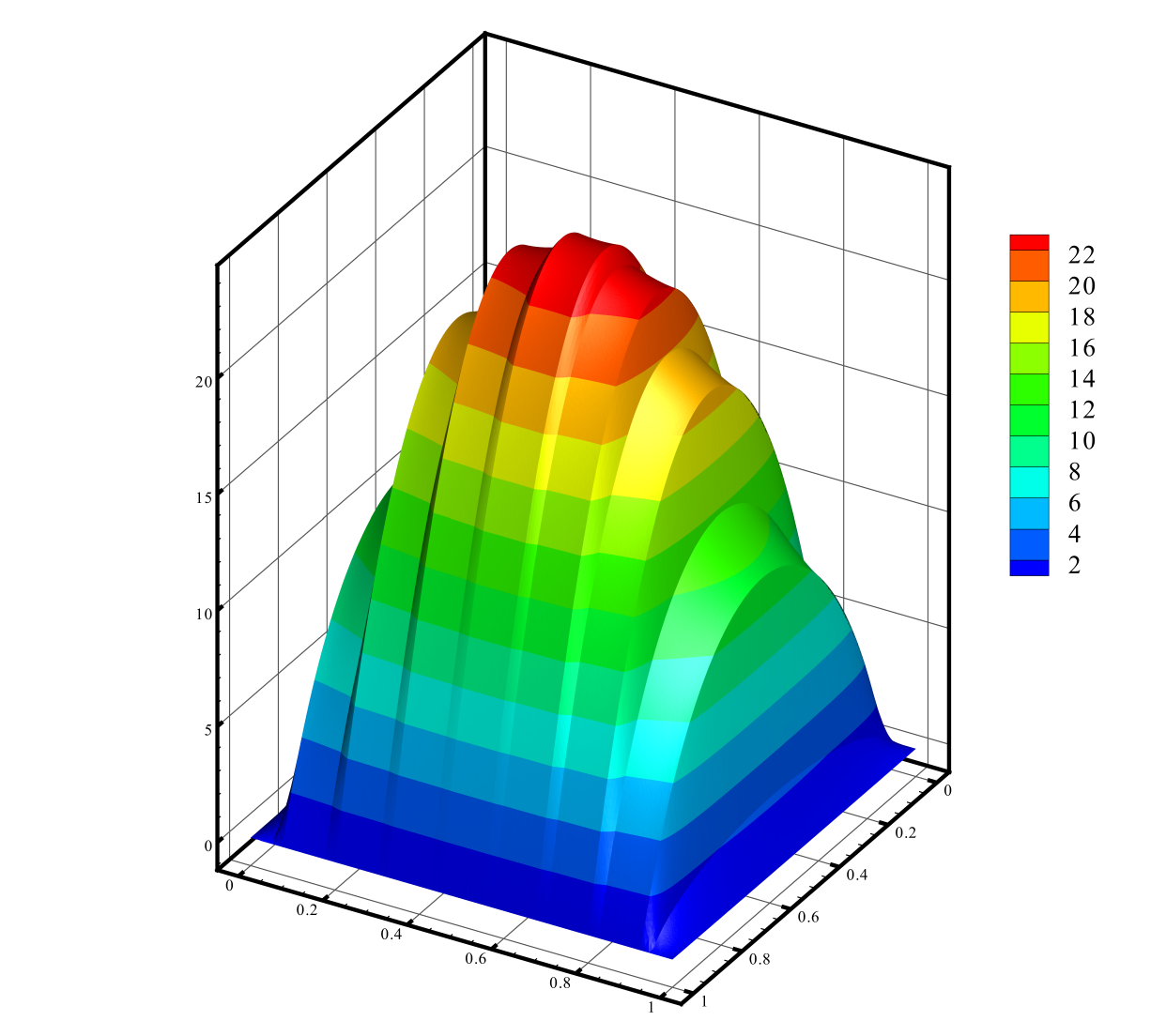} &
\includegraphics[width=0.17\textwidth]{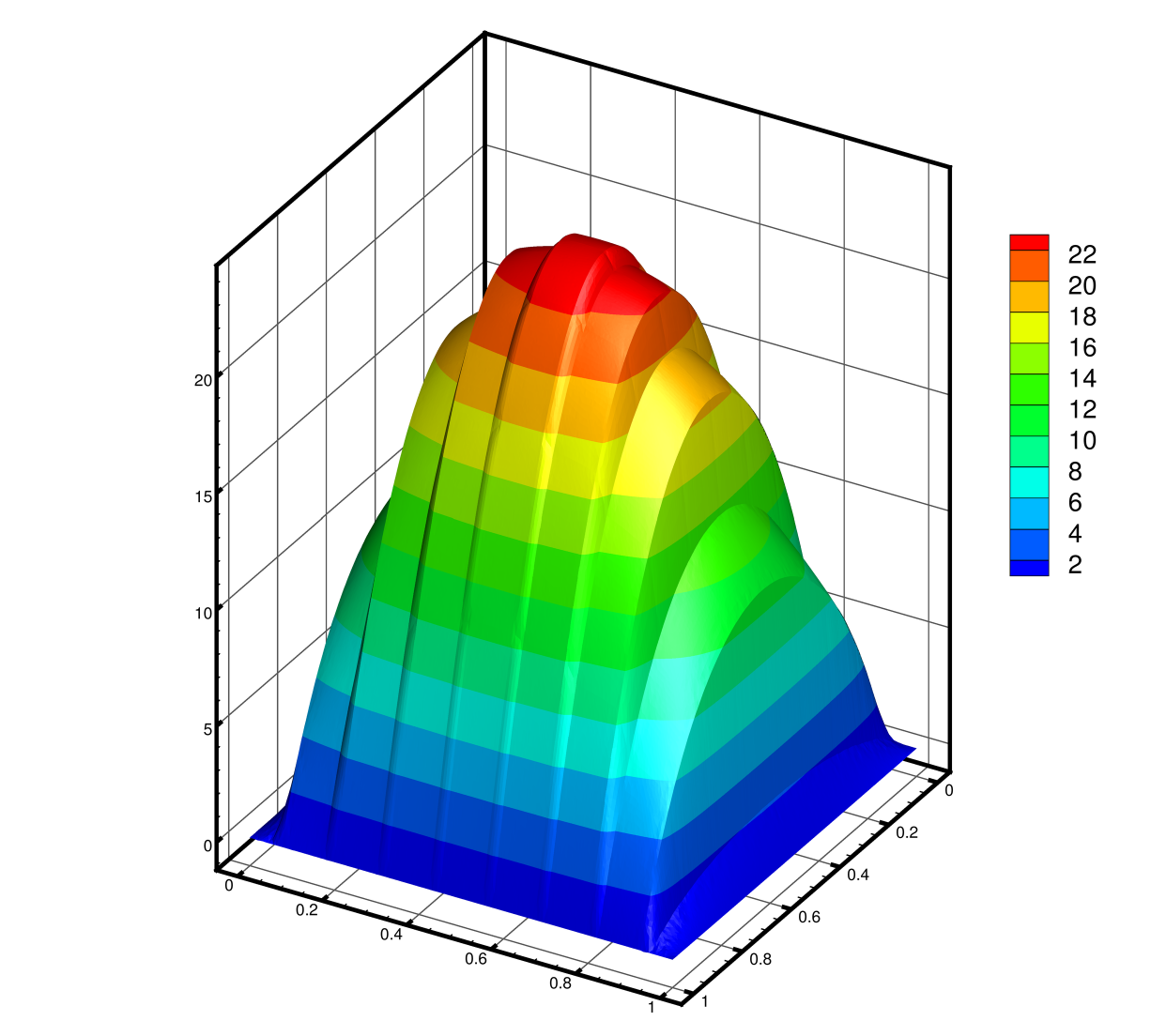} &
\includegraphics[width=0.17\textwidth]{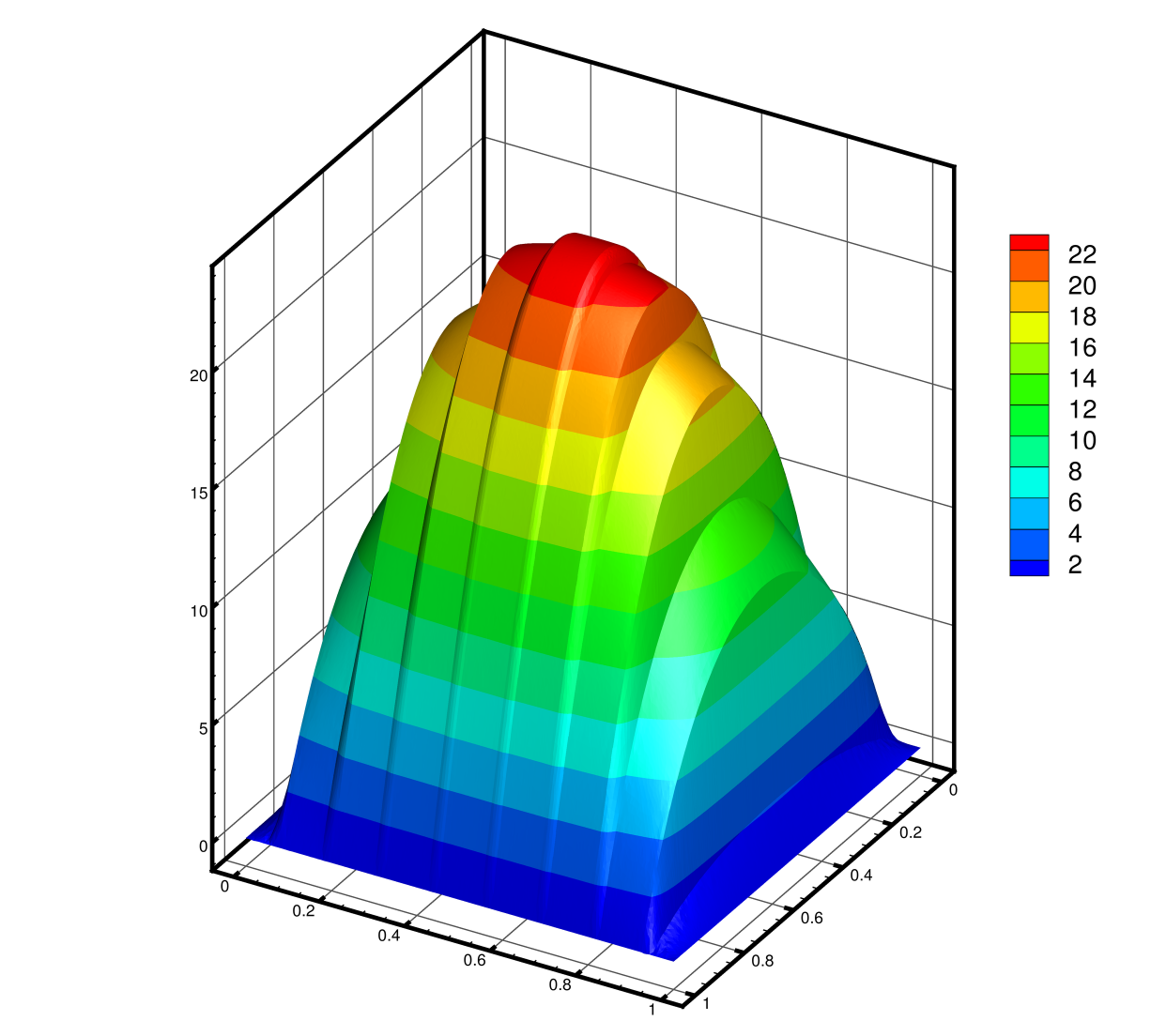}\\
(a) & (b) & (c) & (d) & (e)
\end{tabular}
\caption{Continuum 1 pressure fields for the two-dimensional channel-inclusion problem at $T=1$: (a) $u_1^\eps$; (b) $u_1^{[\mathrm{FP}]}(T)$; (c) $u_1^{(1\eps)}$; (d) $u_1^{(2\eps)}$; (e) $u_{1,r}^{\eps,n}$.}
\label{fig:channel2d-u1-fields}
\end{figure}

\begin{figure}[htbp]
\centering
\begin{tabular}{@{}ccccc@{}}
\includegraphics[width=0.17\textwidth]{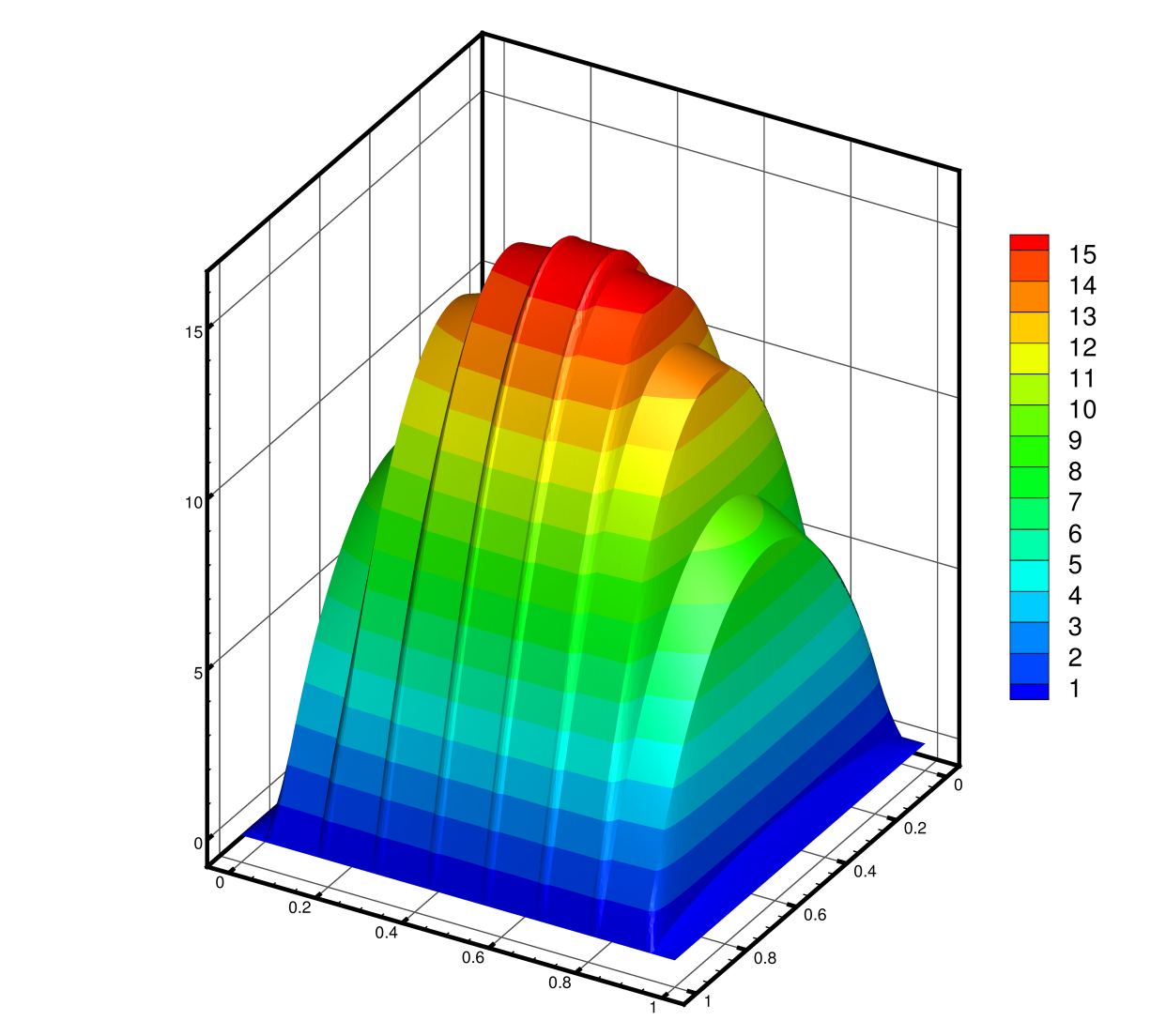} &
\includegraphics[width=0.17\textwidth]{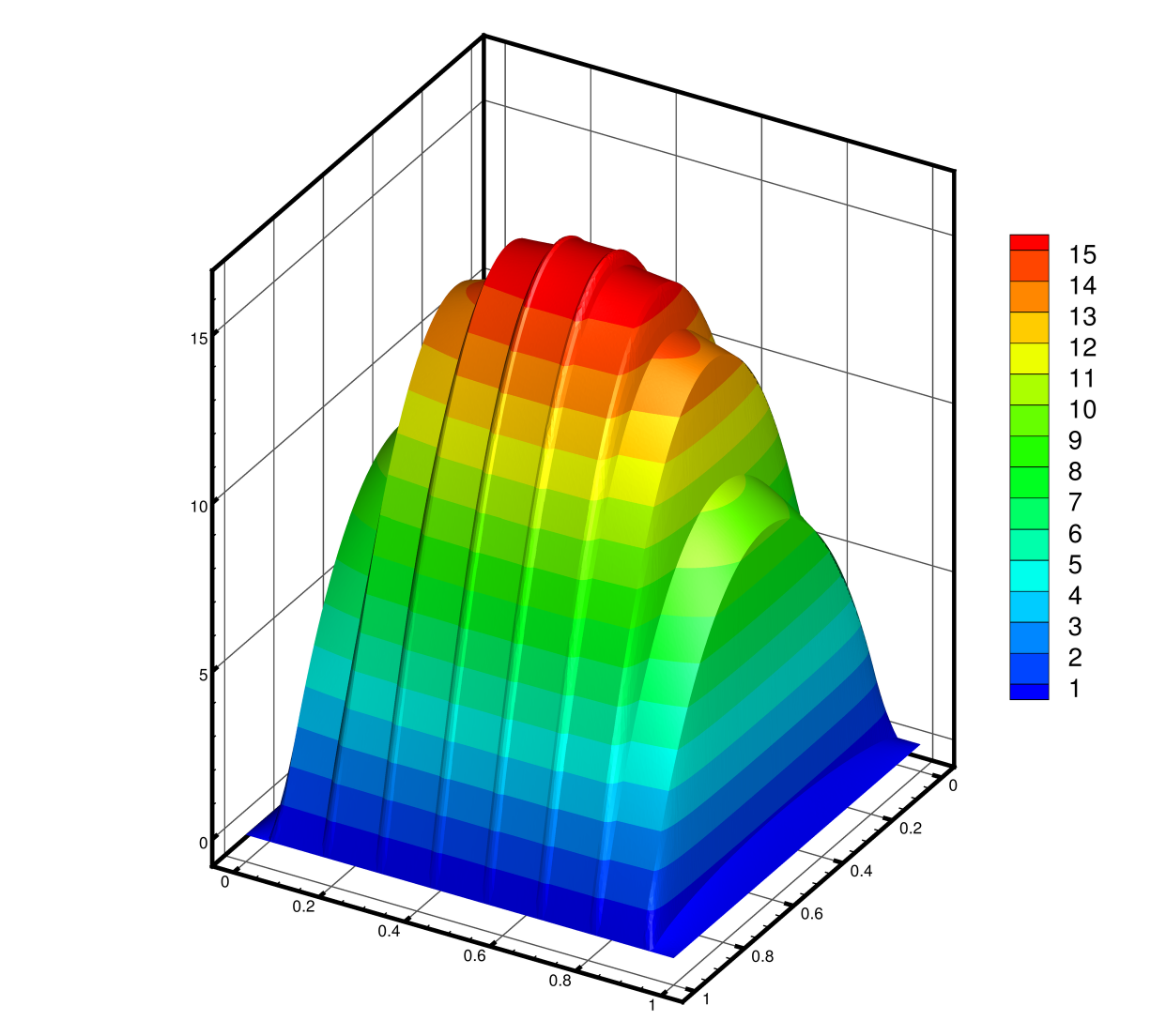} &
\includegraphics[width=0.17\textwidth]{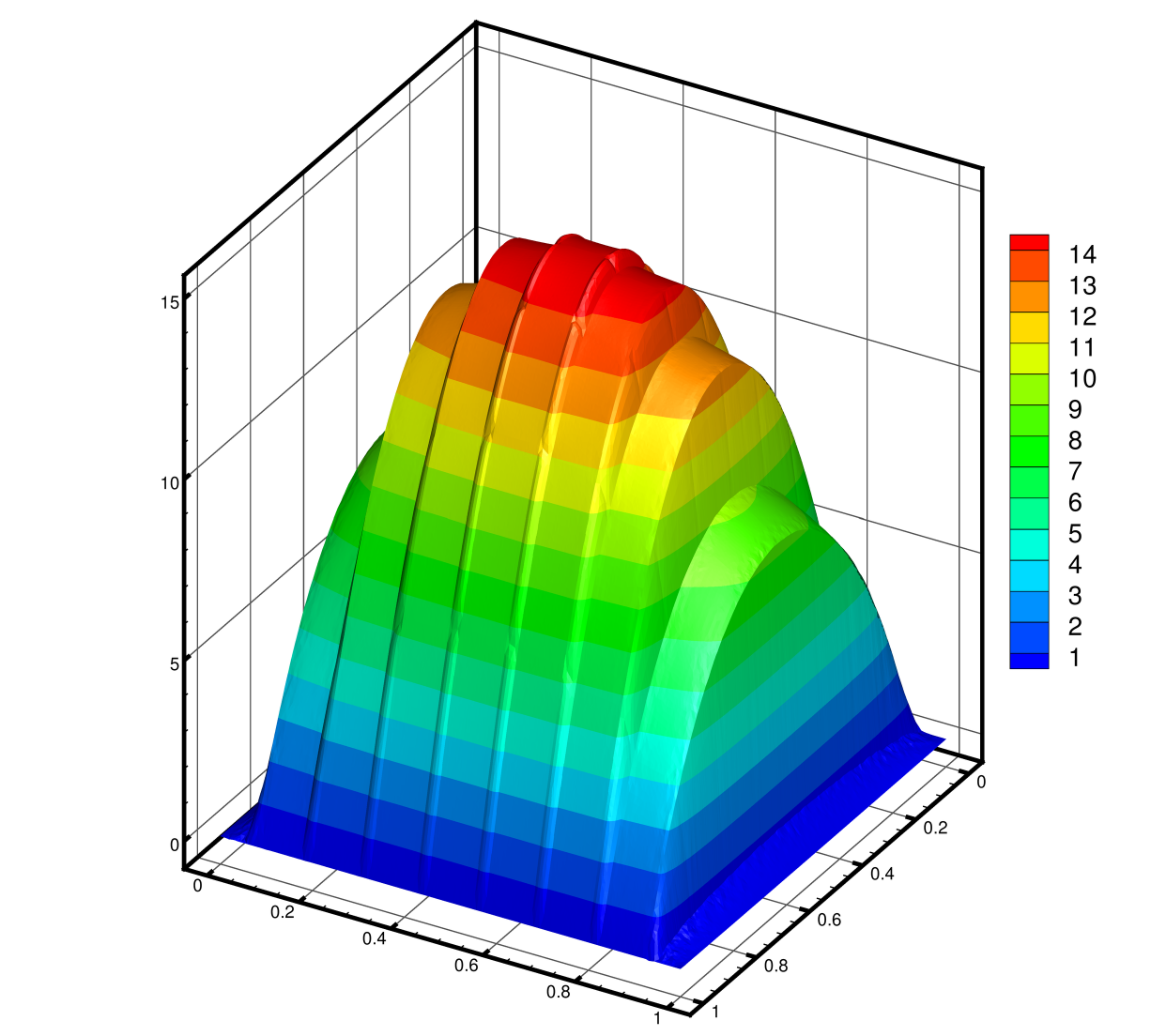} &
\includegraphics[width=0.17\textwidth]{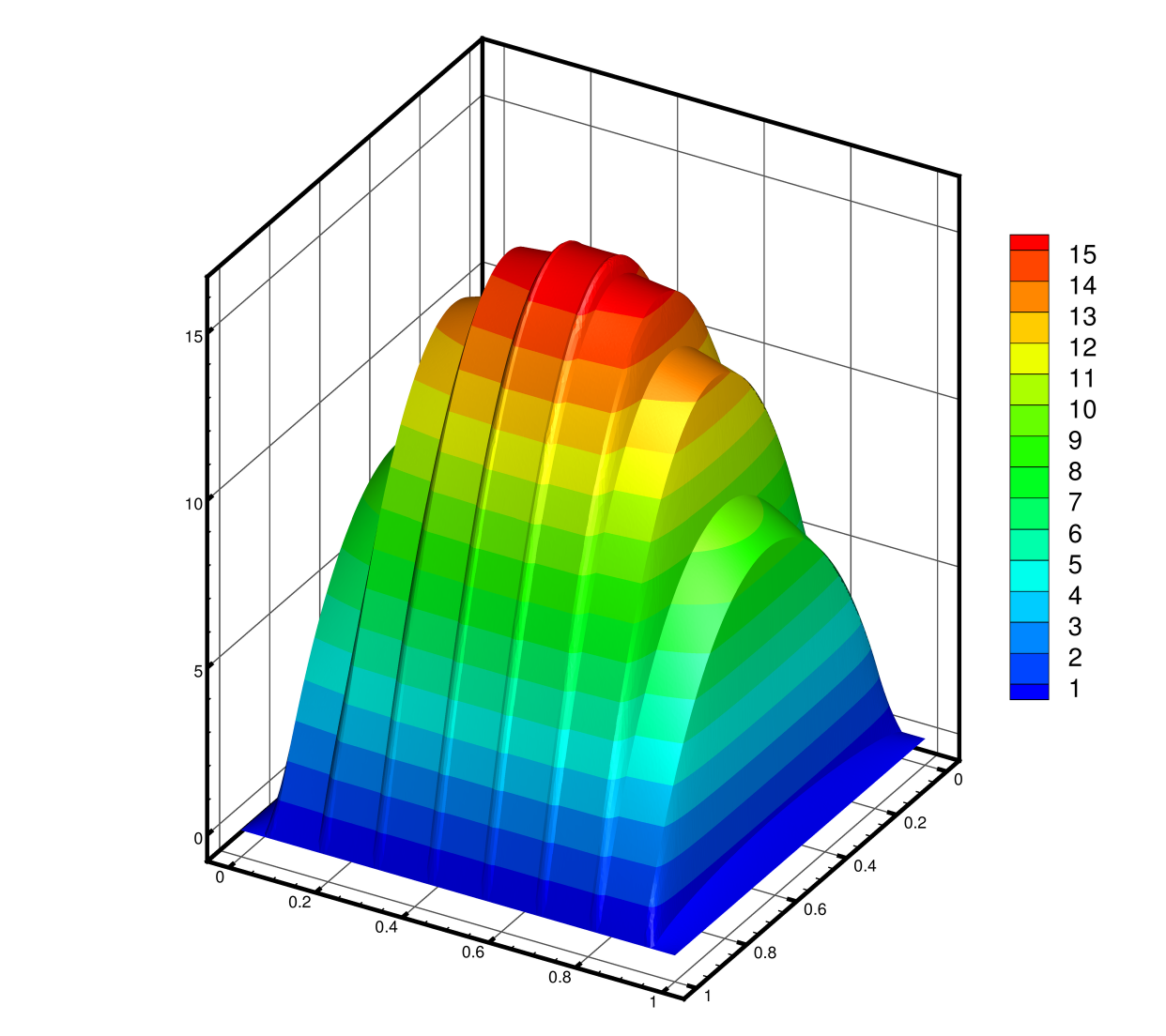} &
\includegraphics[width=0.17\textwidth]{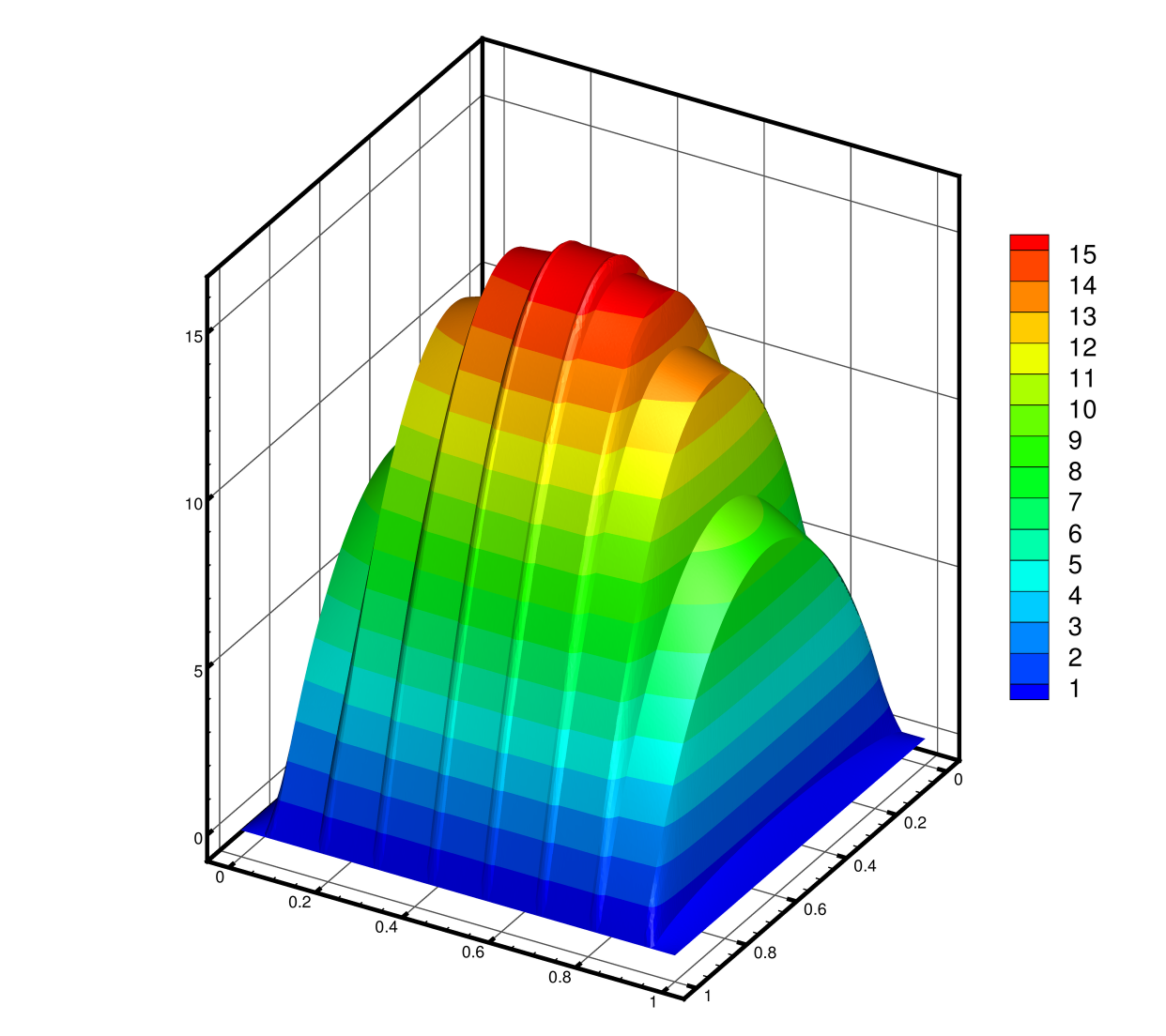}\\
(a) & (b) & (c) & (d) & (e)
\end{tabular}
\caption{Continuum 2 pressure fields for the two-dimensional channel-inclusion problem at $T=1$: (a) $u_2^\eps$; (b) $u_2^{[\mathrm{FP}]}(T)$; (c) $u_2^{(1\eps)}$; (d) $u_2^{(2\eps)}$; (e) $u_{2,r}^{\eps,n}$.}
\label{fig:channel2d-u2-fields}
\end{figure}

\begin{figure}[htbp]
\centering
\begin{tabular}{@{}ccccc@{}}
\includegraphics[width=0.17\textwidth]{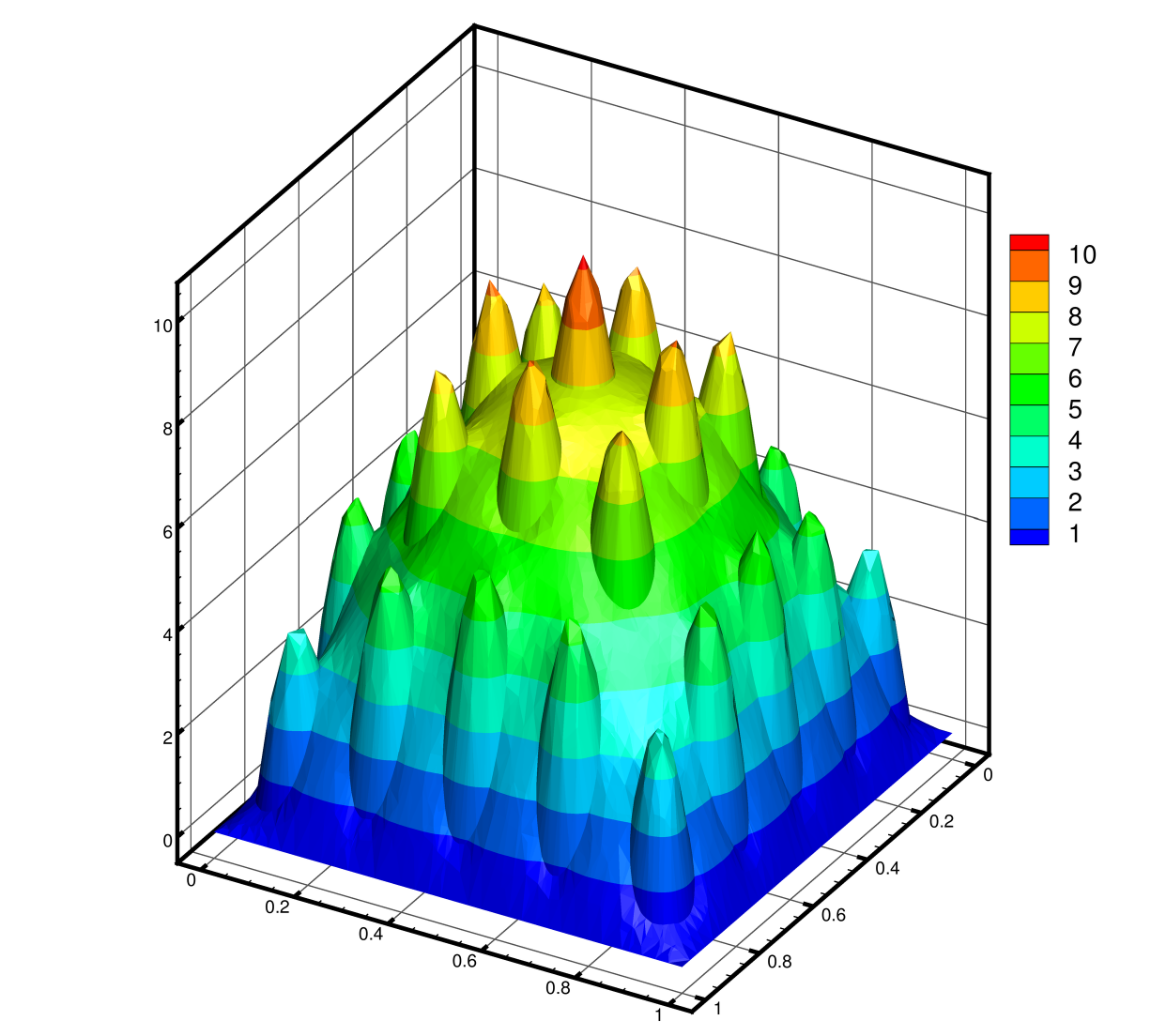} &
\includegraphics[width=0.17\textwidth]{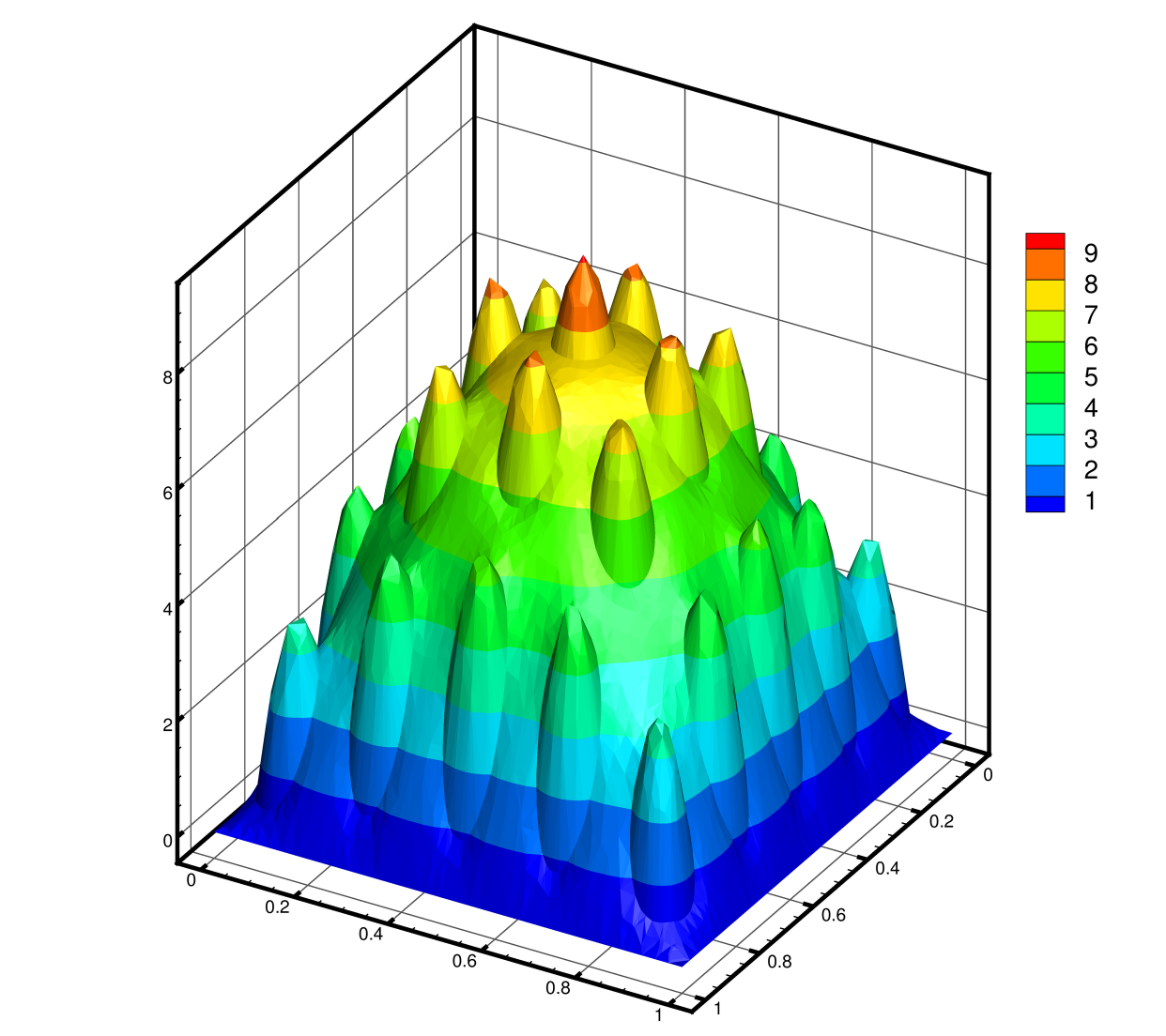} &
\includegraphics[width=0.17\textwidth]{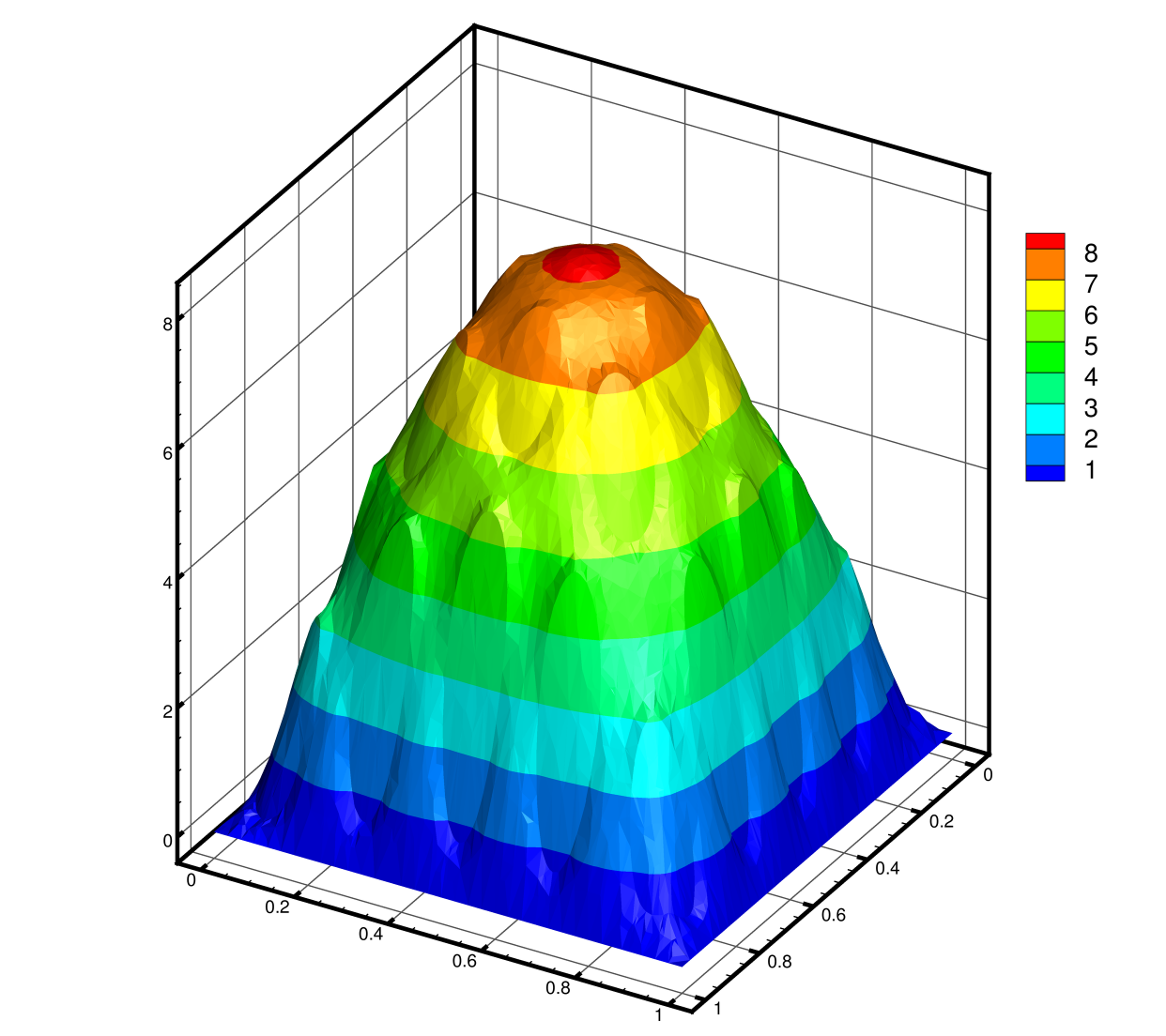} &
\includegraphics[width=0.17\textwidth]{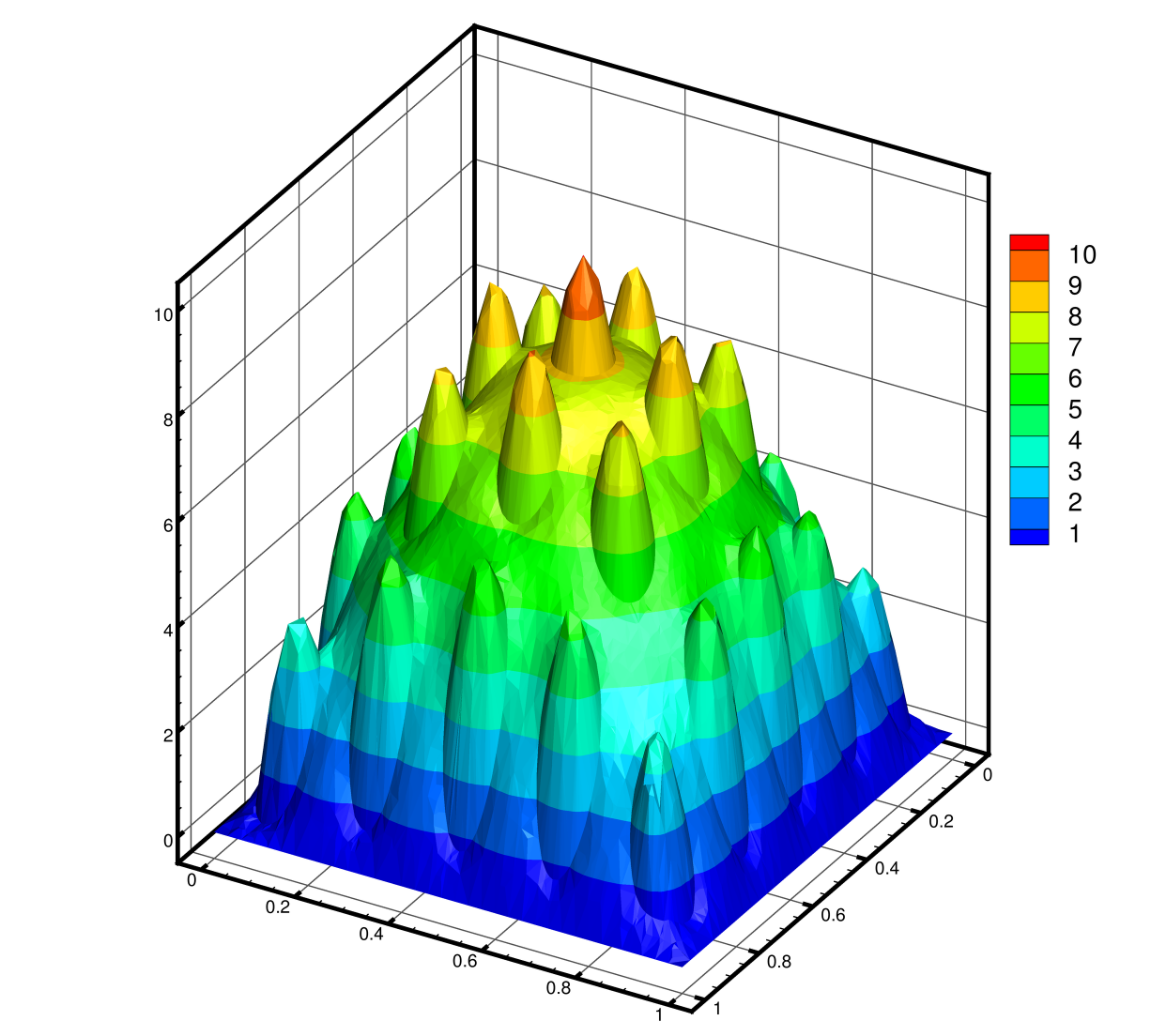} &
\includegraphics[width=0.17\textwidth]{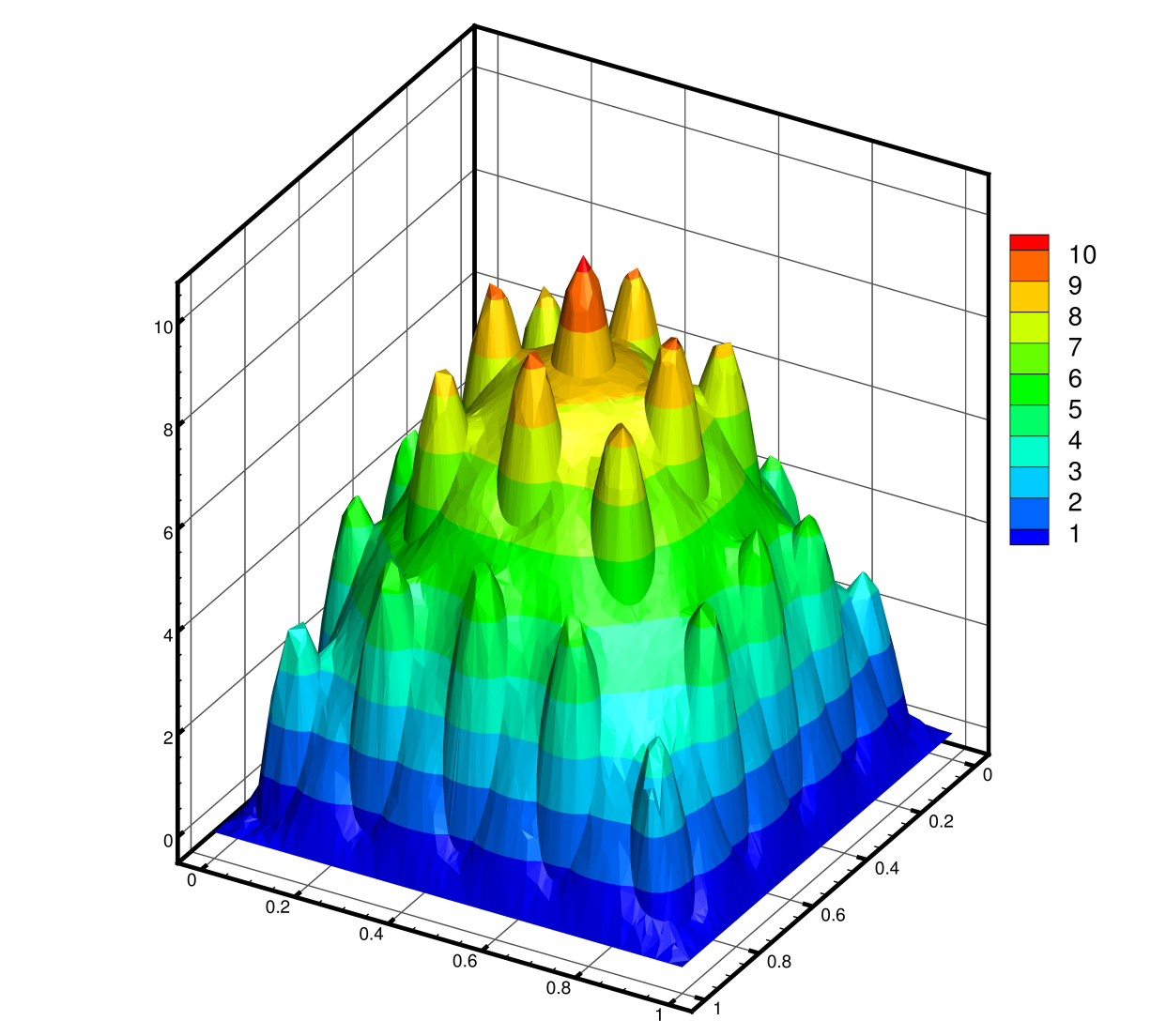}\\
(a) & (b) & (c) & (d) & (e)
\end{tabular}
\caption{Continuum 1 pressure fields for the three-dimensional spherical-inclusion problem at $T=1$ (output section): (a) $u_1^\eps$; (b) $u_1^{[\mathrm{FP}]}(T)$; (c) $u_1^{(1\eps)}$; (d) $u_1^{(2\eps)}$; (e) $u_{1,r}^{\eps,n}$.}
\label{fig:sphere3d-u1-fields}
\end{figure}

\begin{figure}[htbp]
\centering
\begin{tabular}{@{}ccccc@{}}
\includegraphics[width=0.17\textwidth]{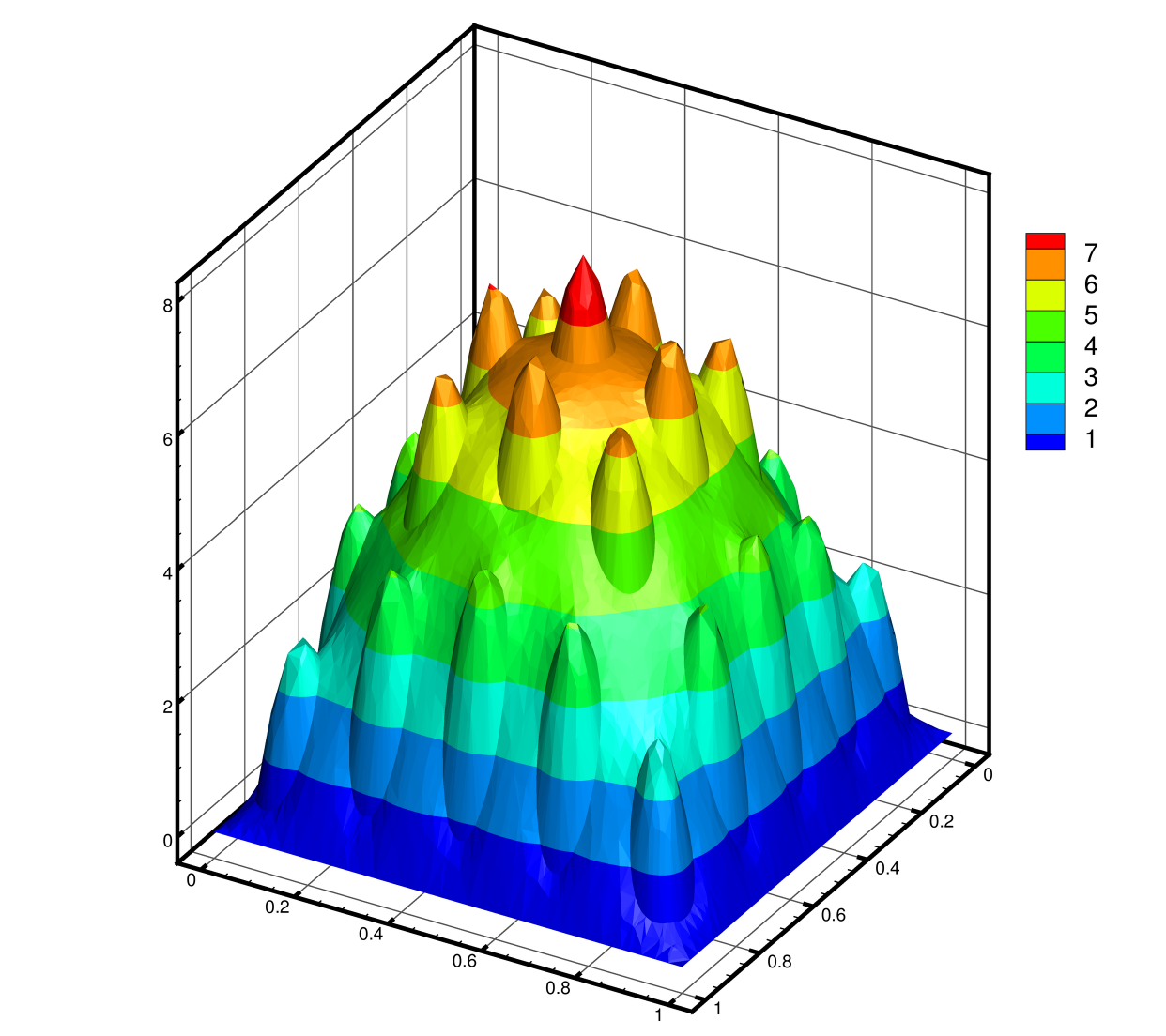} &
\includegraphics[width=0.17\textwidth]{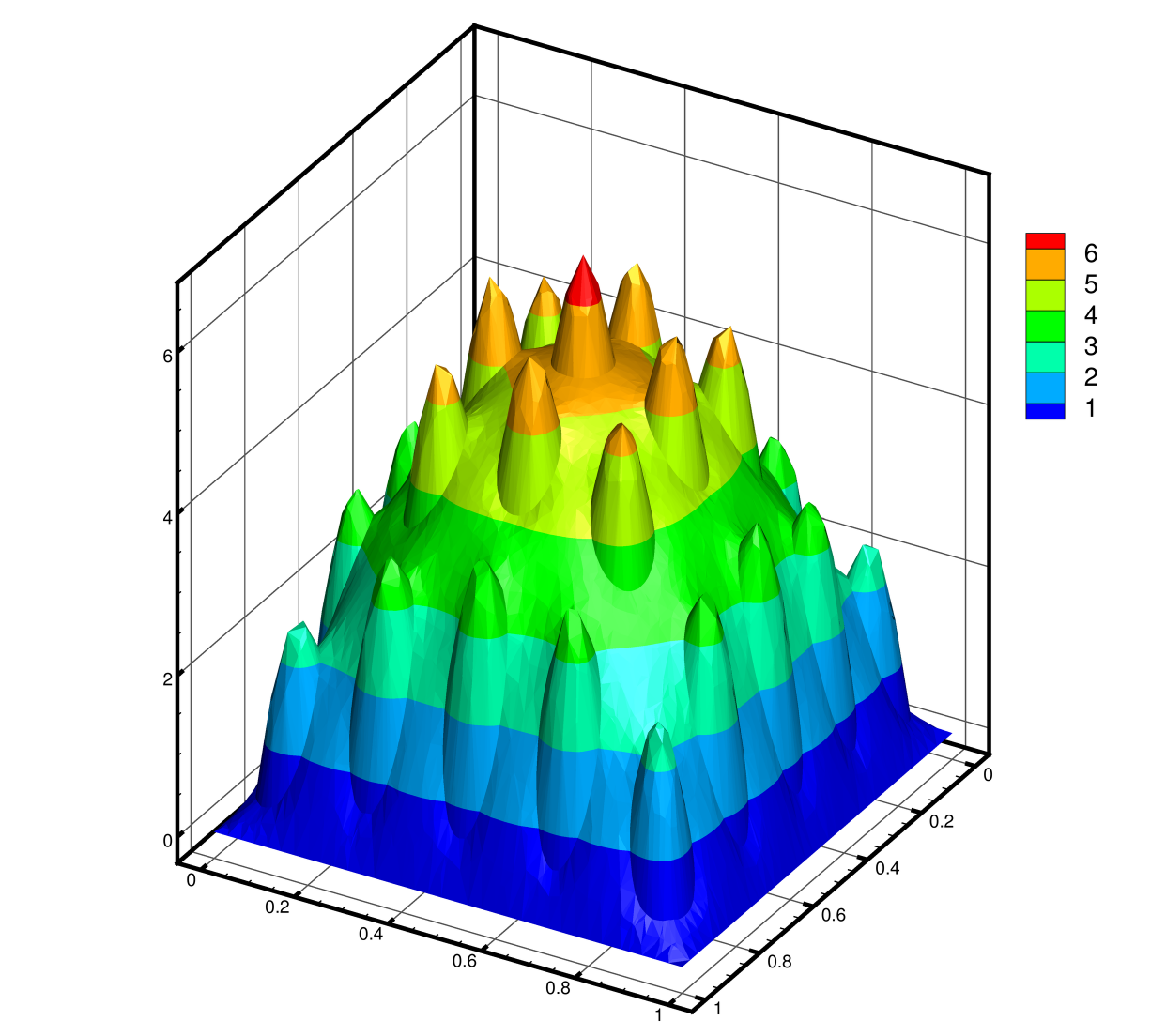} &
\includegraphics[width=0.17\textwidth]{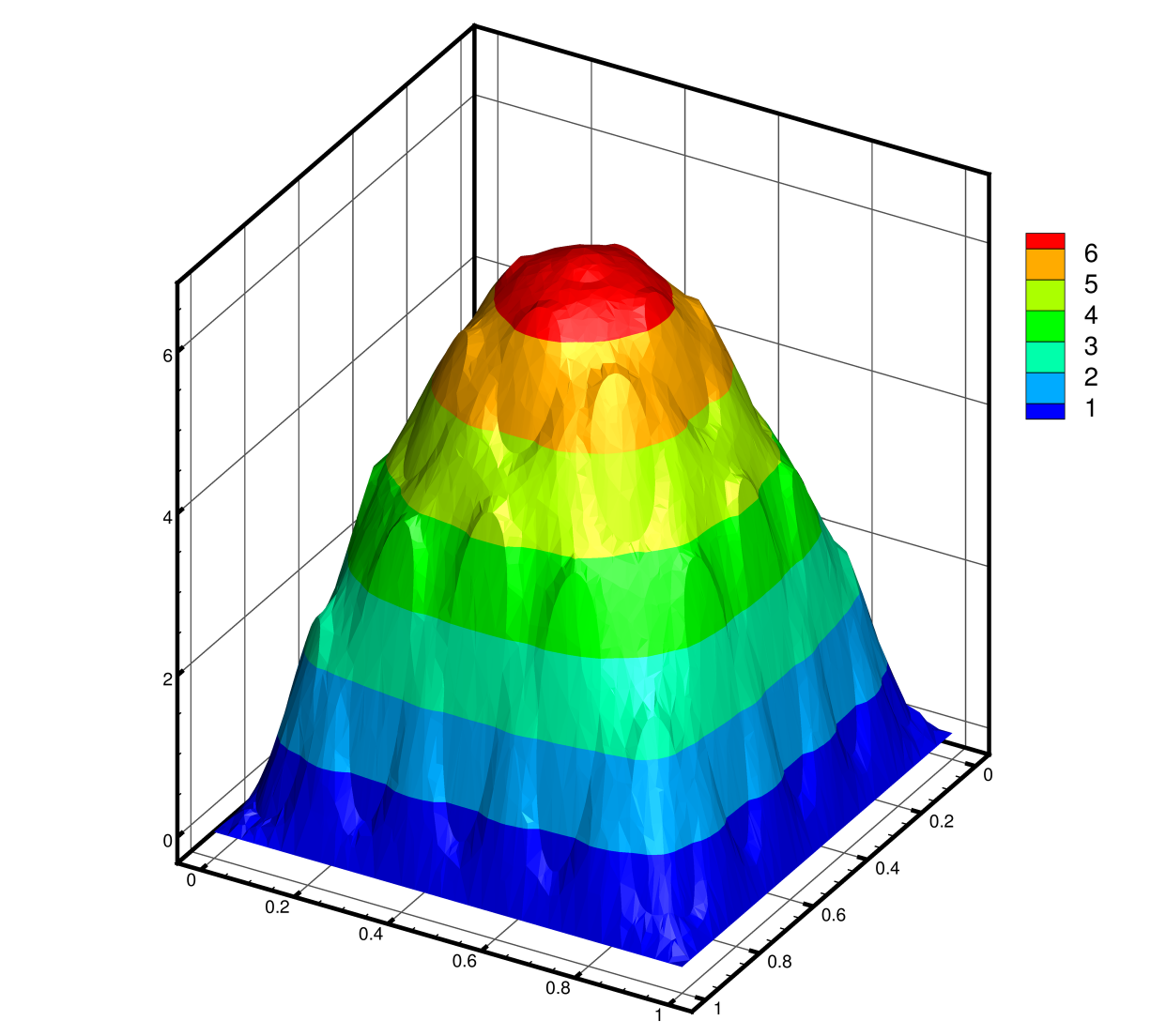} &
\includegraphics[width=0.17\textwidth]{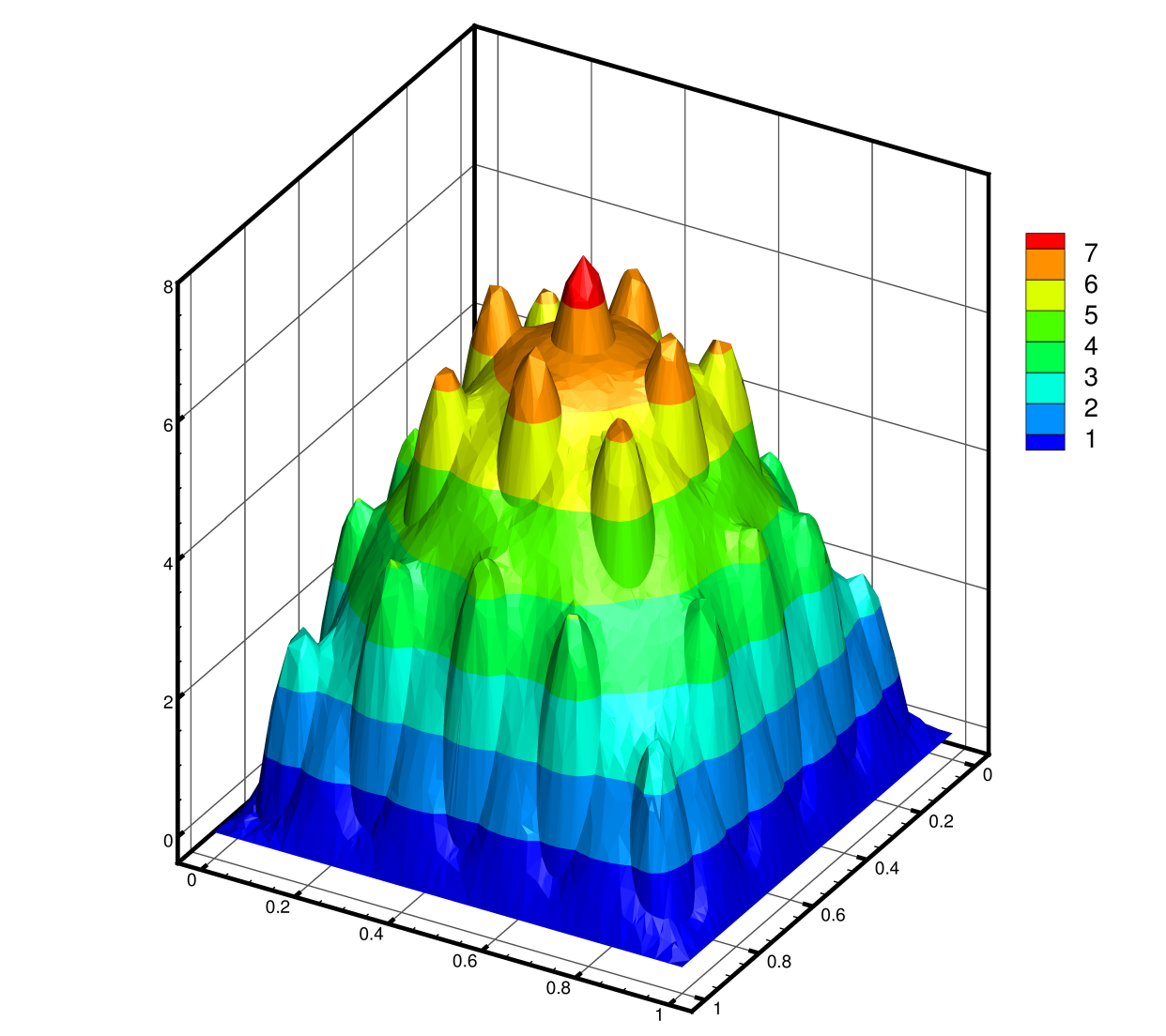} &
\includegraphics[width=0.17\textwidth]{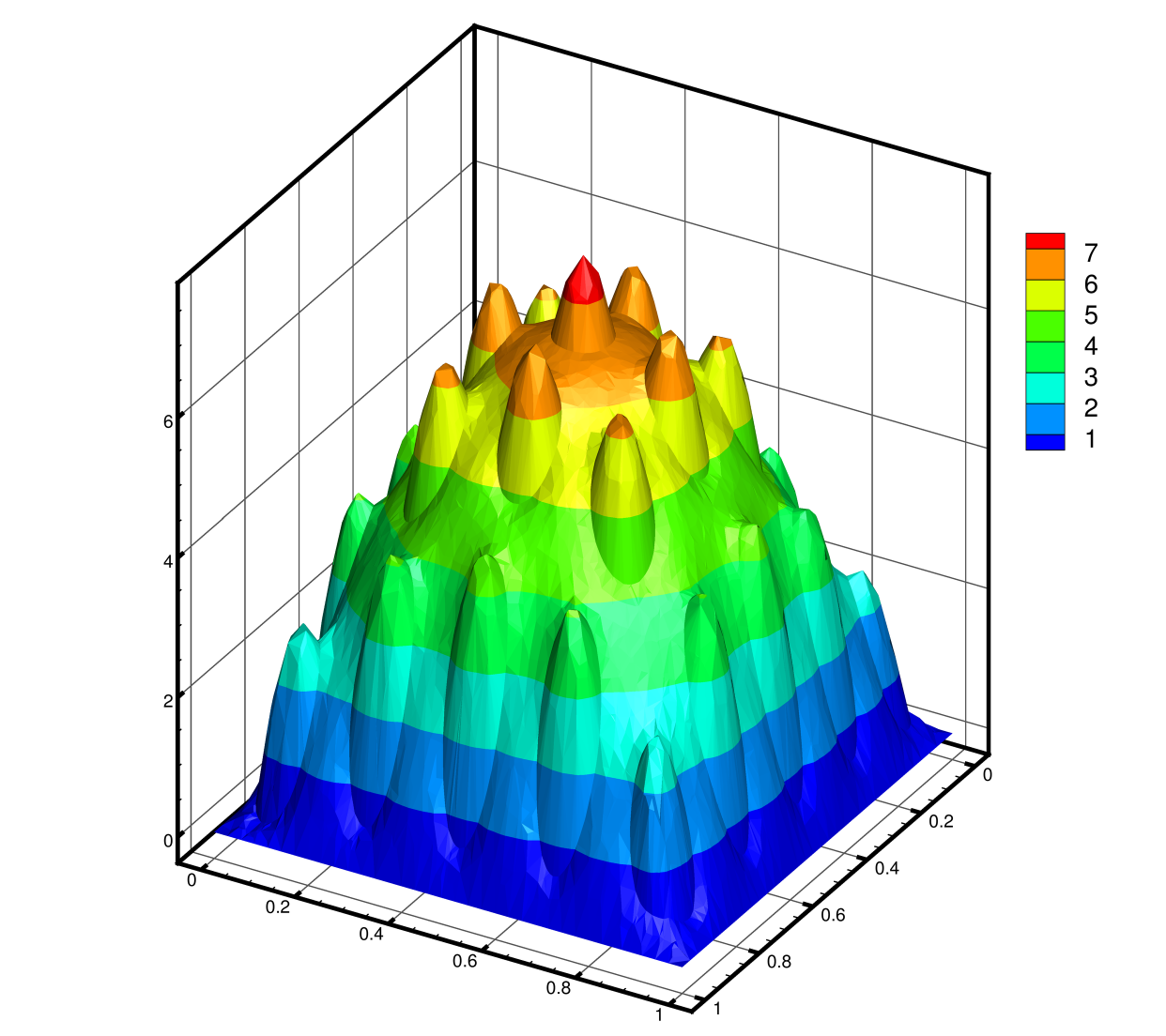}\\
(a) & (b) & (c) & (d) & (e)
\end{tabular}
\caption{Continuum 2 pressure fields for the three-dimensional spherical-inclusion problem at $T=1$ (output section): (a) $u_2^\eps$; (b) $u_2^{[\mathrm{FP}]}(T)$; (c) $u_2^{(1\eps)}$; (d) $u_2^{(2\eps)}$; (e) $u_{2,r}^{\eps,n}$.}
\label{fig:sphere3d-u2-fields}
\end{figure}

\begin{figure}[htbp]
\centering
\begin{tabular}{@{}ccccc@{}}
\includegraphics[width=0.17\textwidth]{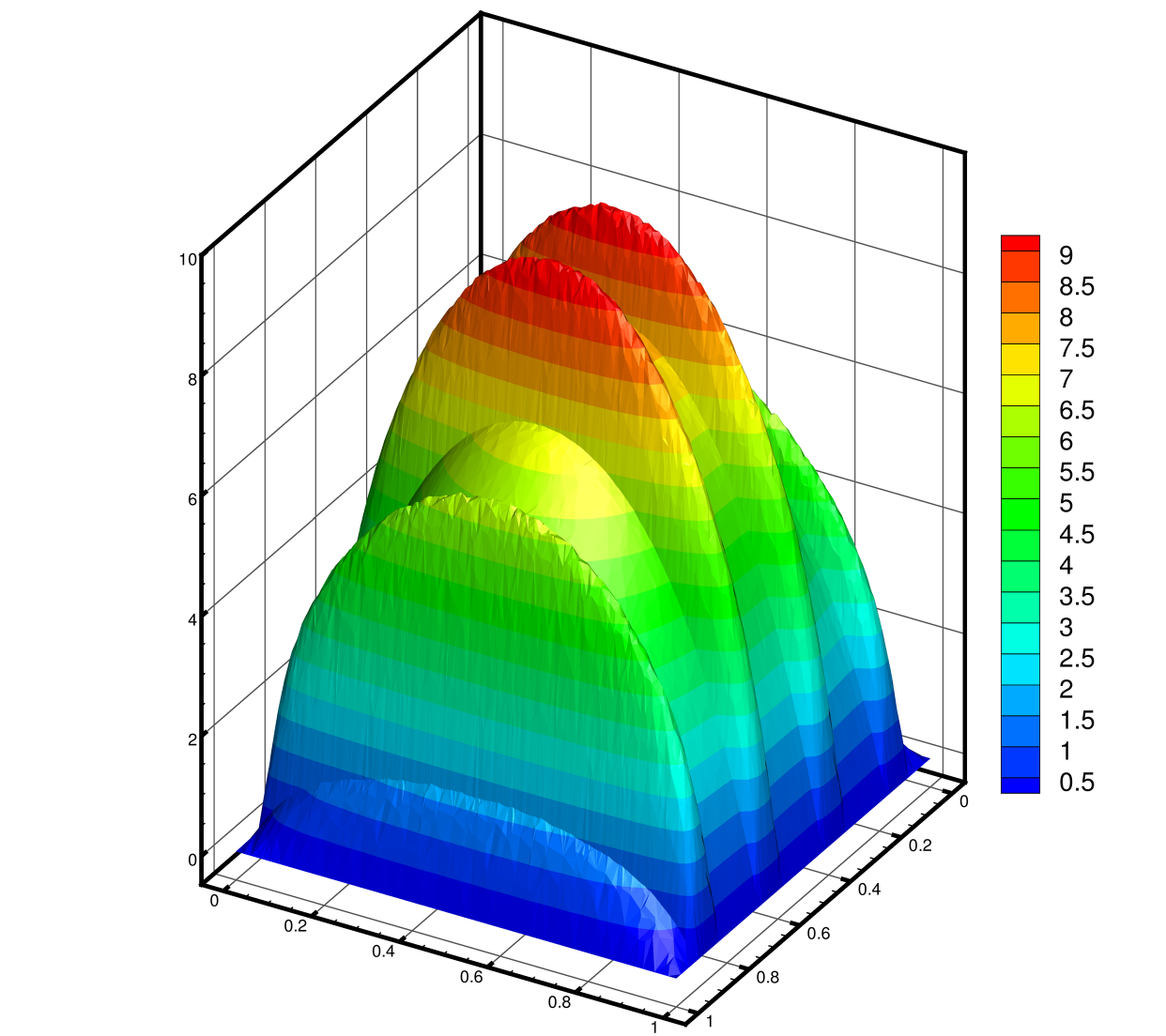} &
\includegraphics[width=0.17\textwidth]{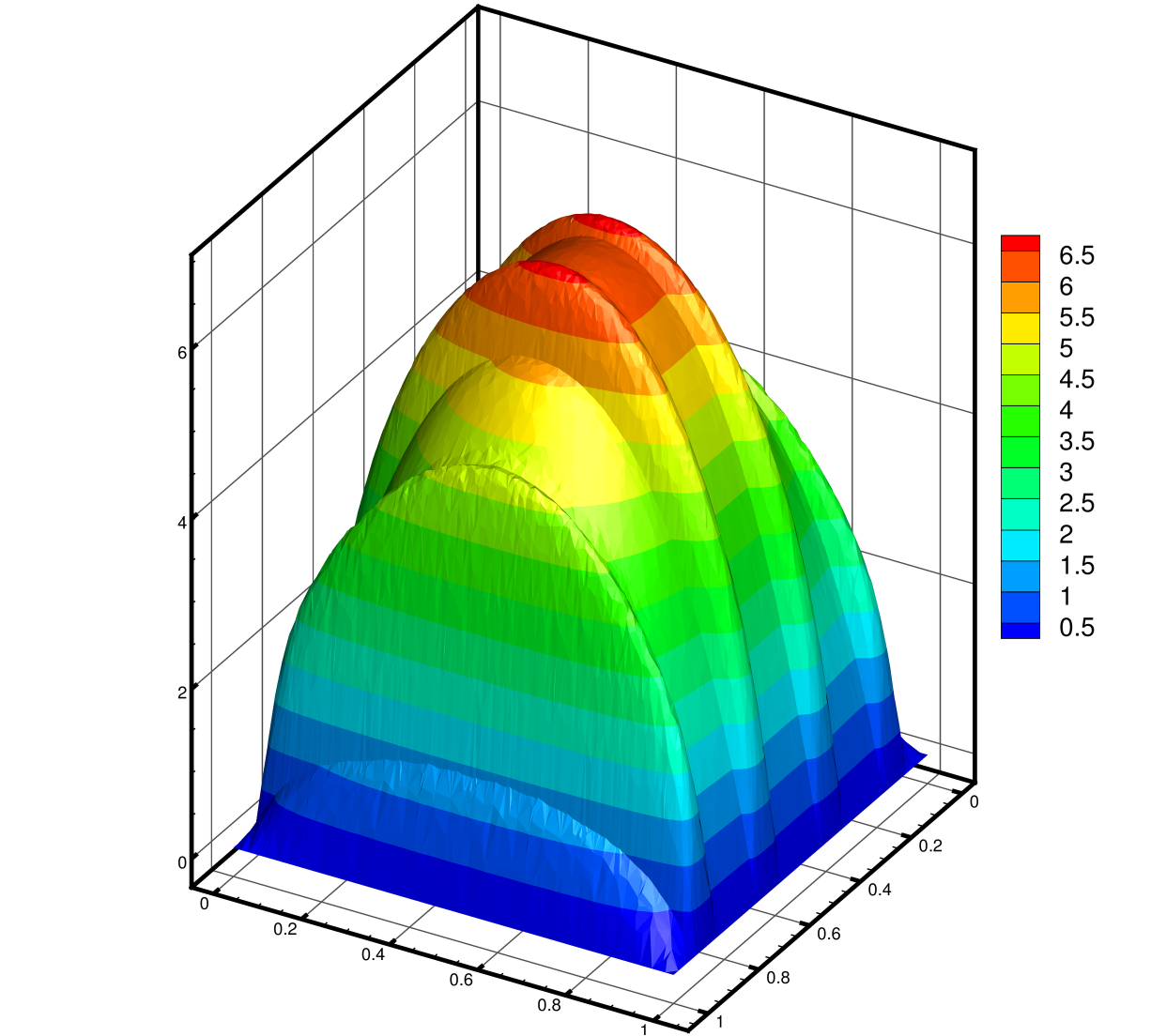} &
\includegraphics[width=0.17\textwidth]{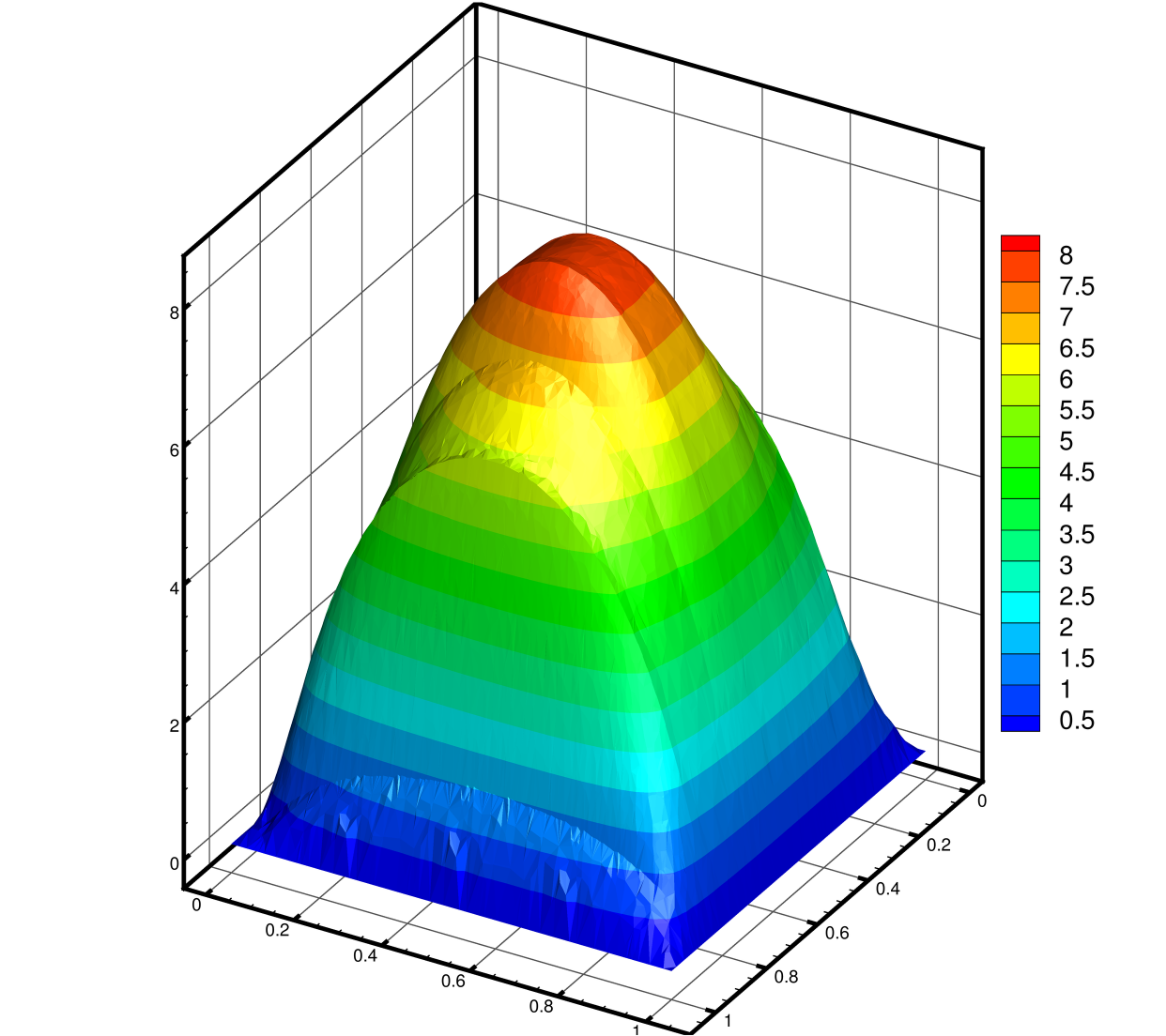} &
\includegraphics[width=0.17\textwidth]{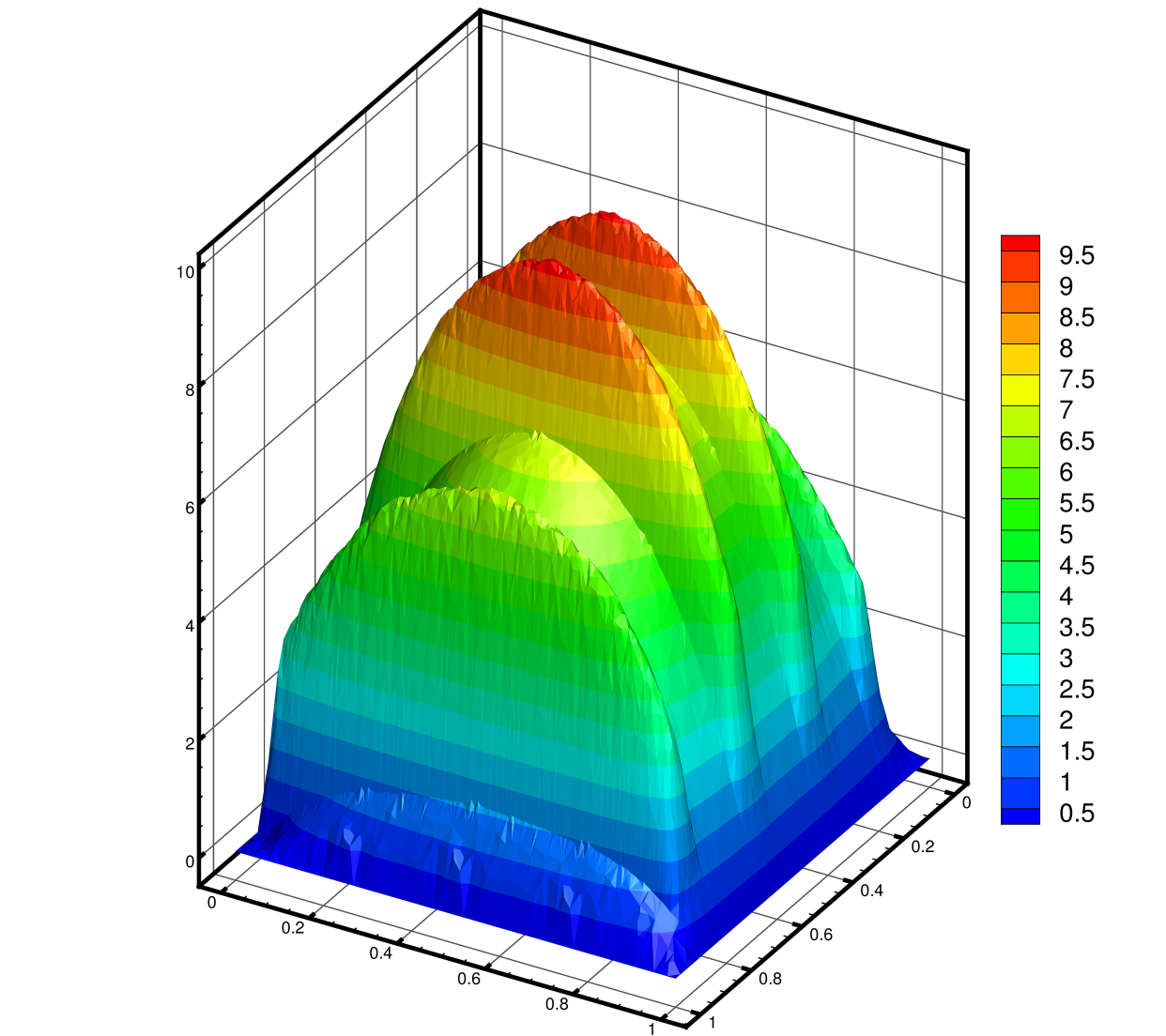} &
\includegraphics[width=0.17\textwidth]{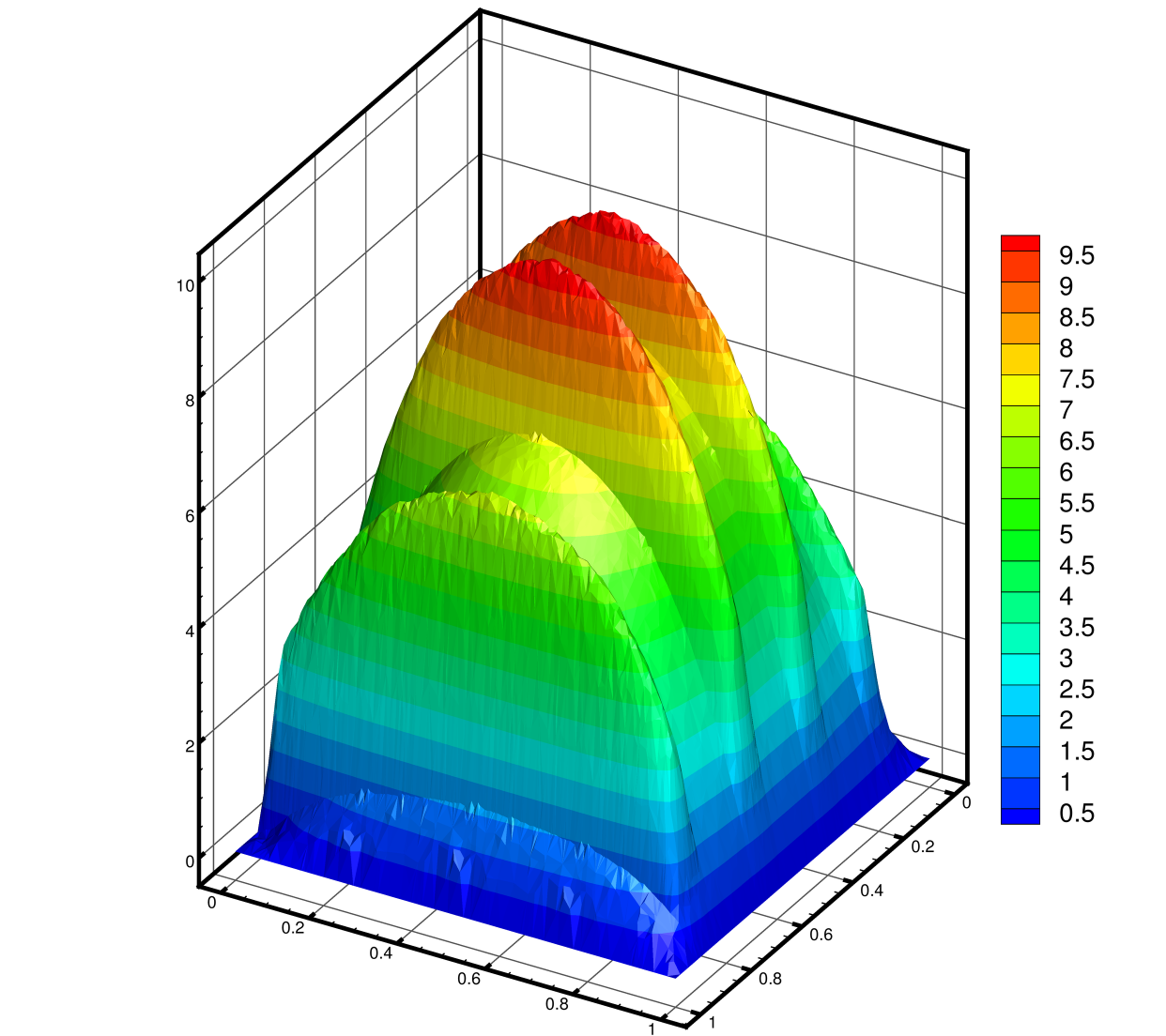}\\
(a) & (b) & (c) & (d) & (e)
\end{tabular}
\caption{Continuum 1 pressure fields for the three-dimensional channel-inclusion problem at $T=1$ (output section): (a) $u_1^\eps$; (b) $u_1^{[\mathrm{FP}]}(T)$; (c) $u_1^{(1\eps)}$; (d) $u_1^{(2\eps)}$; (e) $u_{1,r}^{\eps,n}$.}
\label{fig:channel3d-u1-fields}
\end{figure}

\begin{figure}[htbp]
\centering
\begin{tabular}{@{}ccccc@{}}
\includegraphics[width=0.17\textwidth]{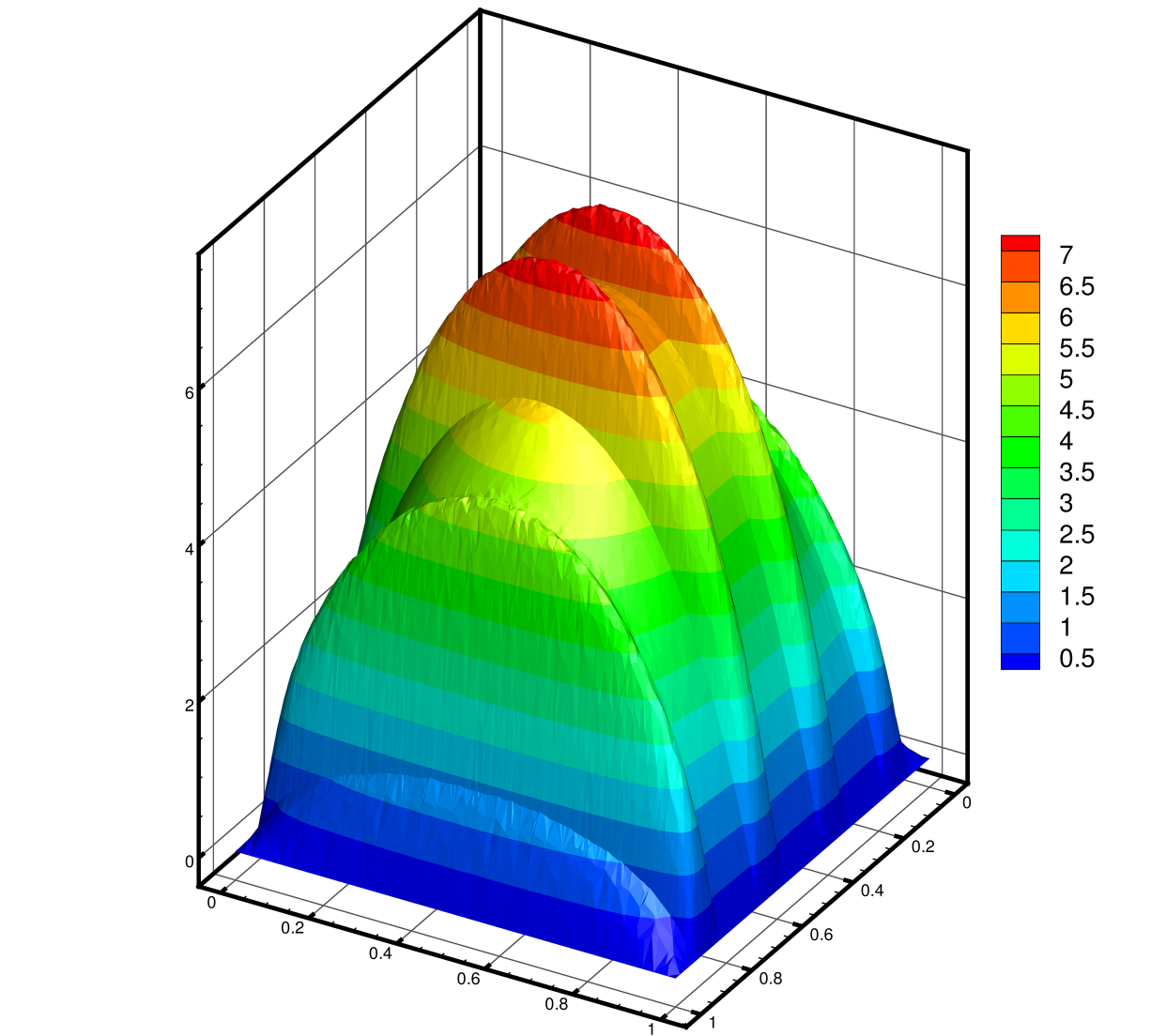} &
\includegraphics[width=0.17\textwidth]{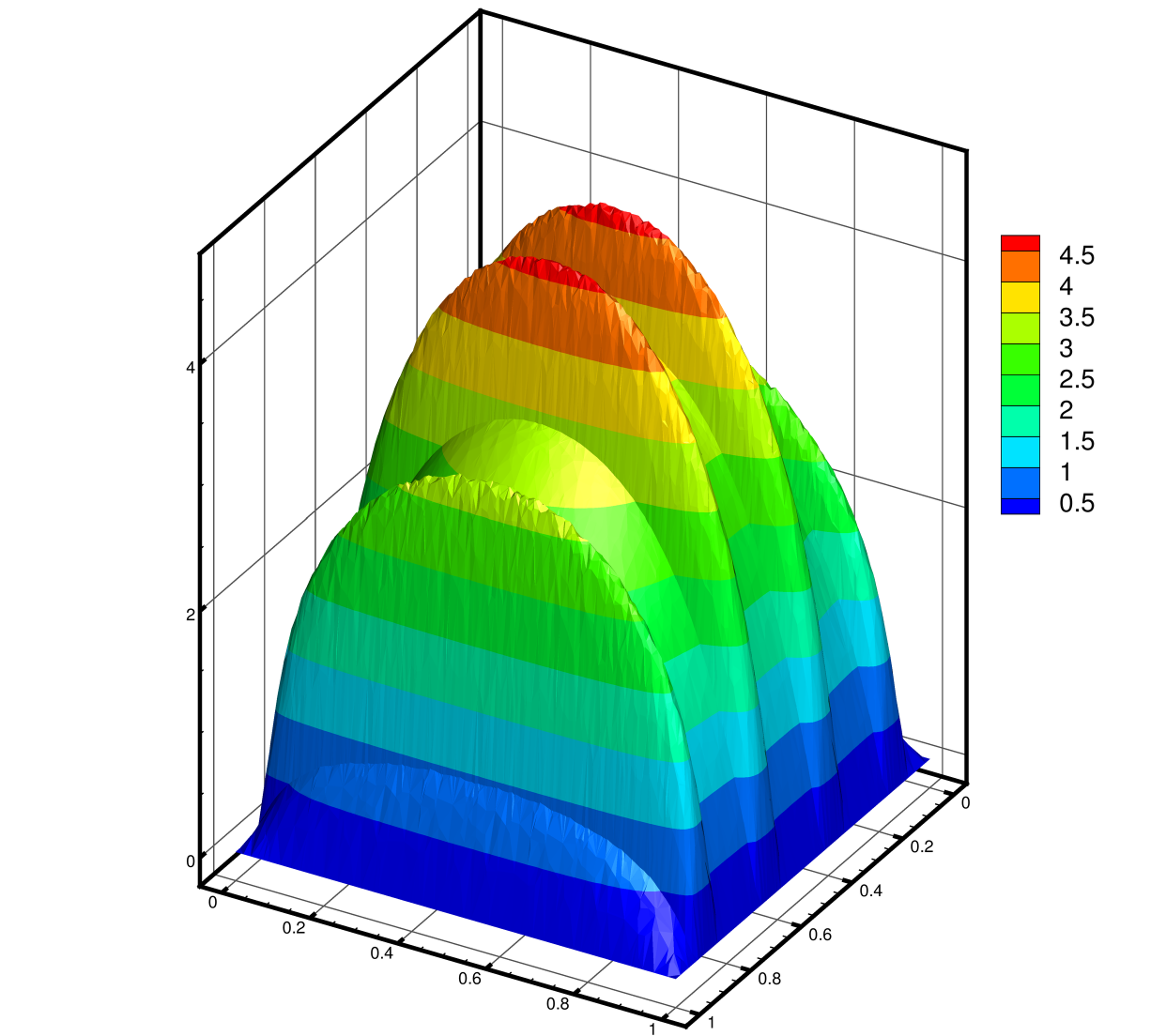} &
\includegraphics[width=0.17\textwidth]{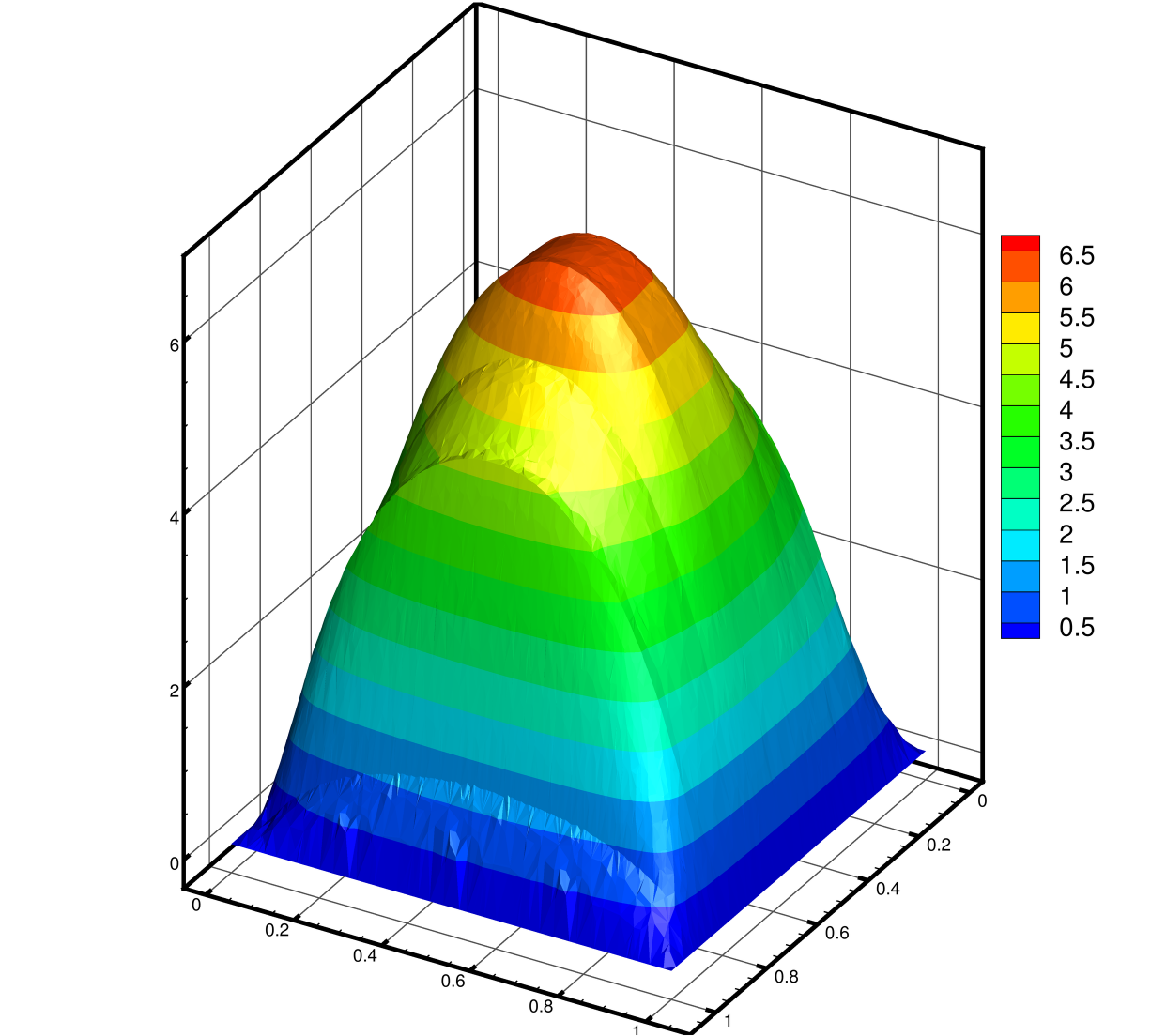} &
\includegraphics[width=0.17\textwidth]{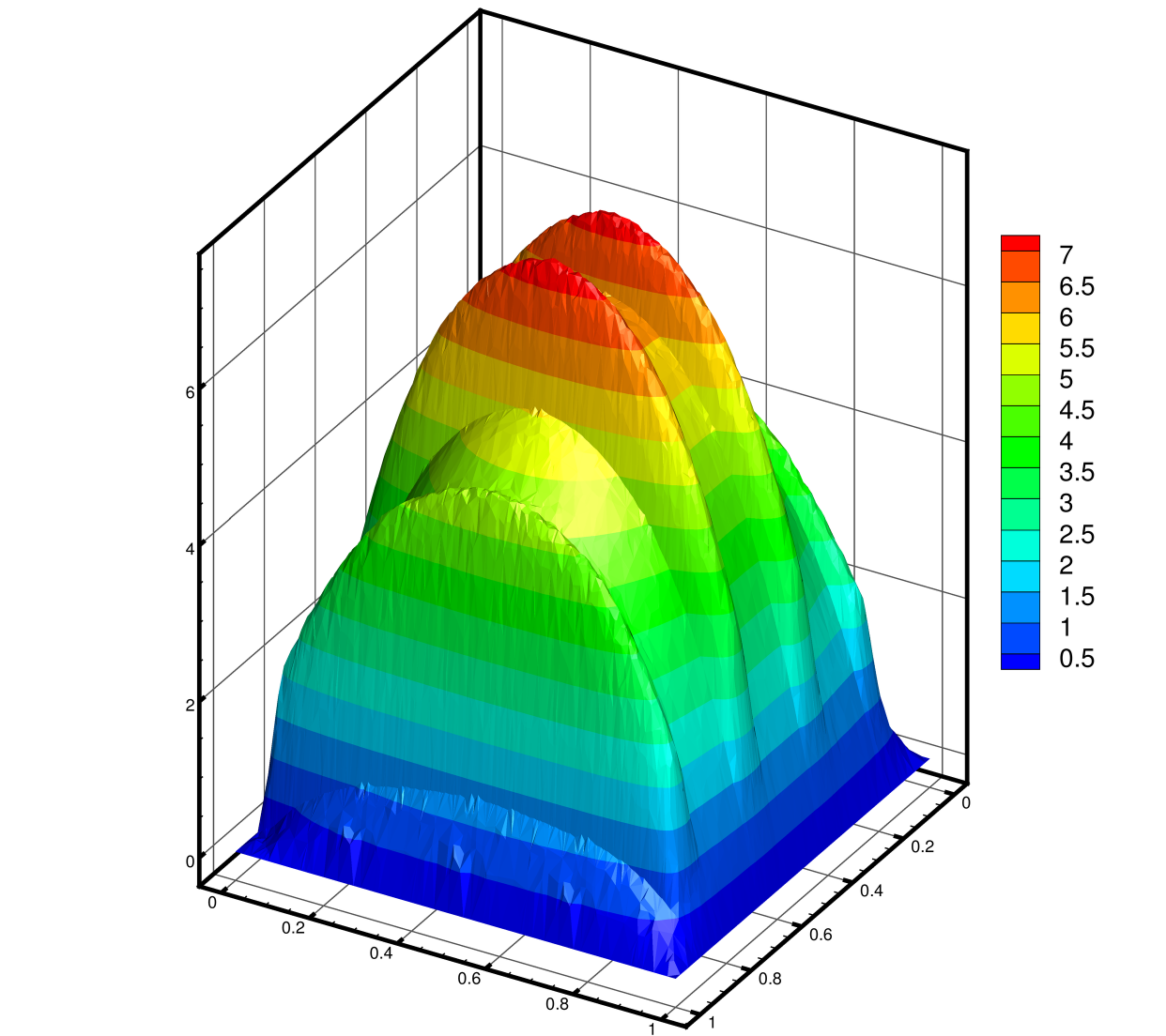} &
\includegraphics[width=0.17\textwidth]{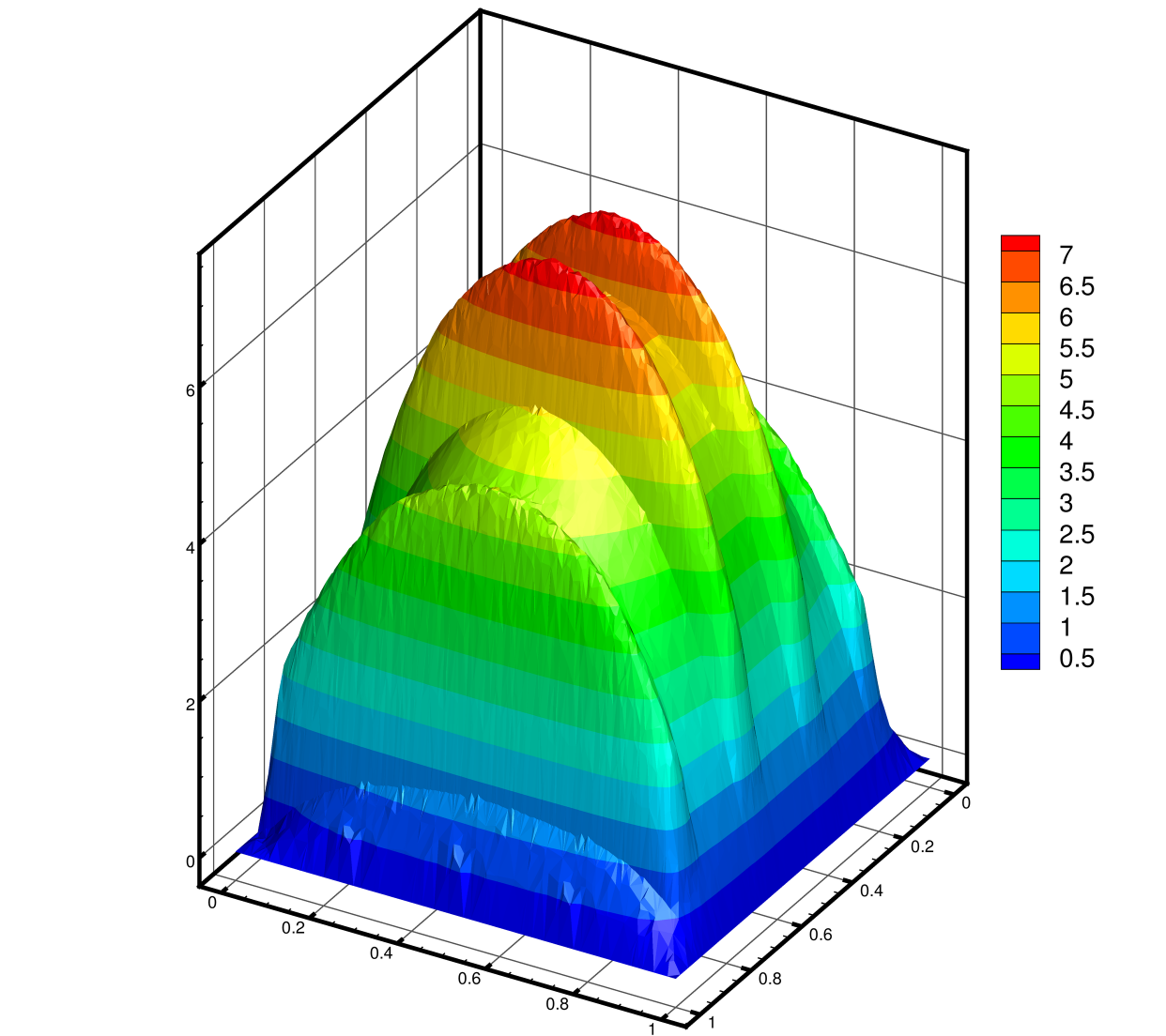}\\
(a) & (b) & (c) & (d) & (e)
\end{tabular}
\caption{Continuum 2 pressure fields for the three-dimensional channel-inclusion problem at $T=1$ (output section): (a) $u_2^\eps$; (b) $u_2^{[\mathrm{FP}]}(T)$; (c) $u_2^{(1\eps)}$; (d) $u_2^{(2\eps)}$; (e) $u_{2,r}^{\eps,n}$.}
\label{fig:channel3d-u2-fields}
\end{figure}

\begin{figure}[htbp]
\centering
\begin{tabular}{@{}ccccc@{}}
\includegraphics[width=0.17\textwidth]{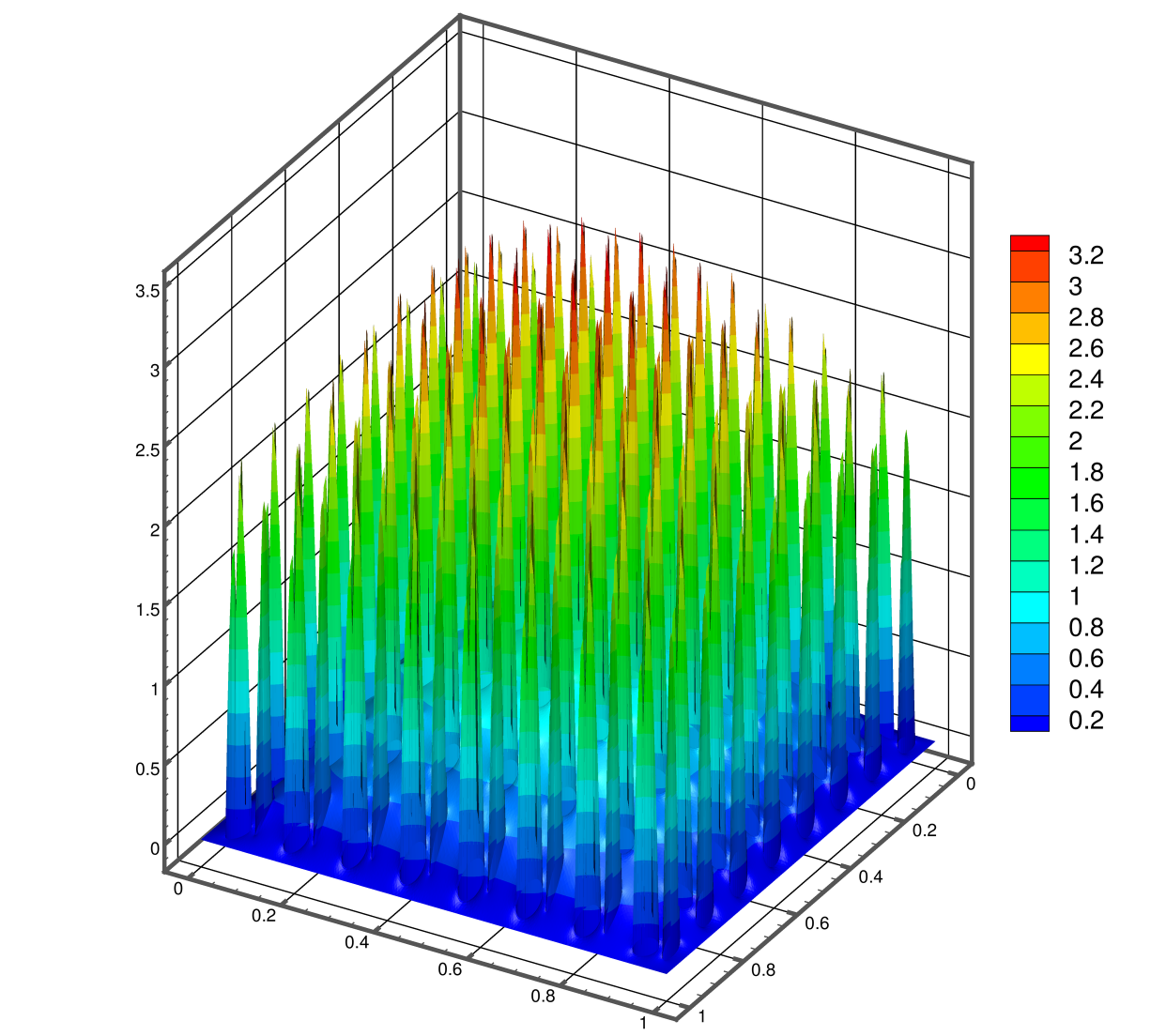} &
\includegraphics[width=0.17\textwidth]{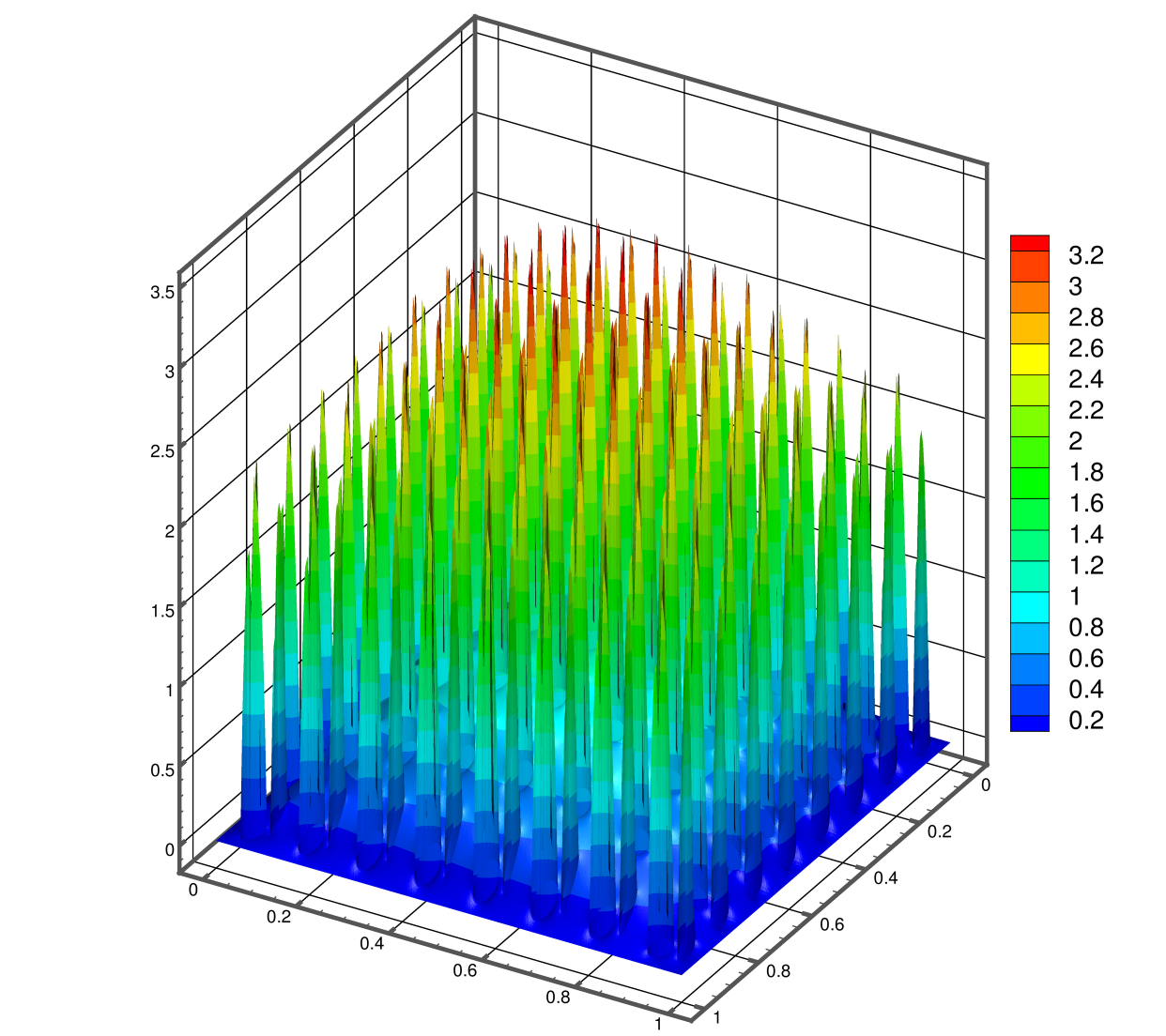} &
\includegraphics[width=0.17\textwidth]{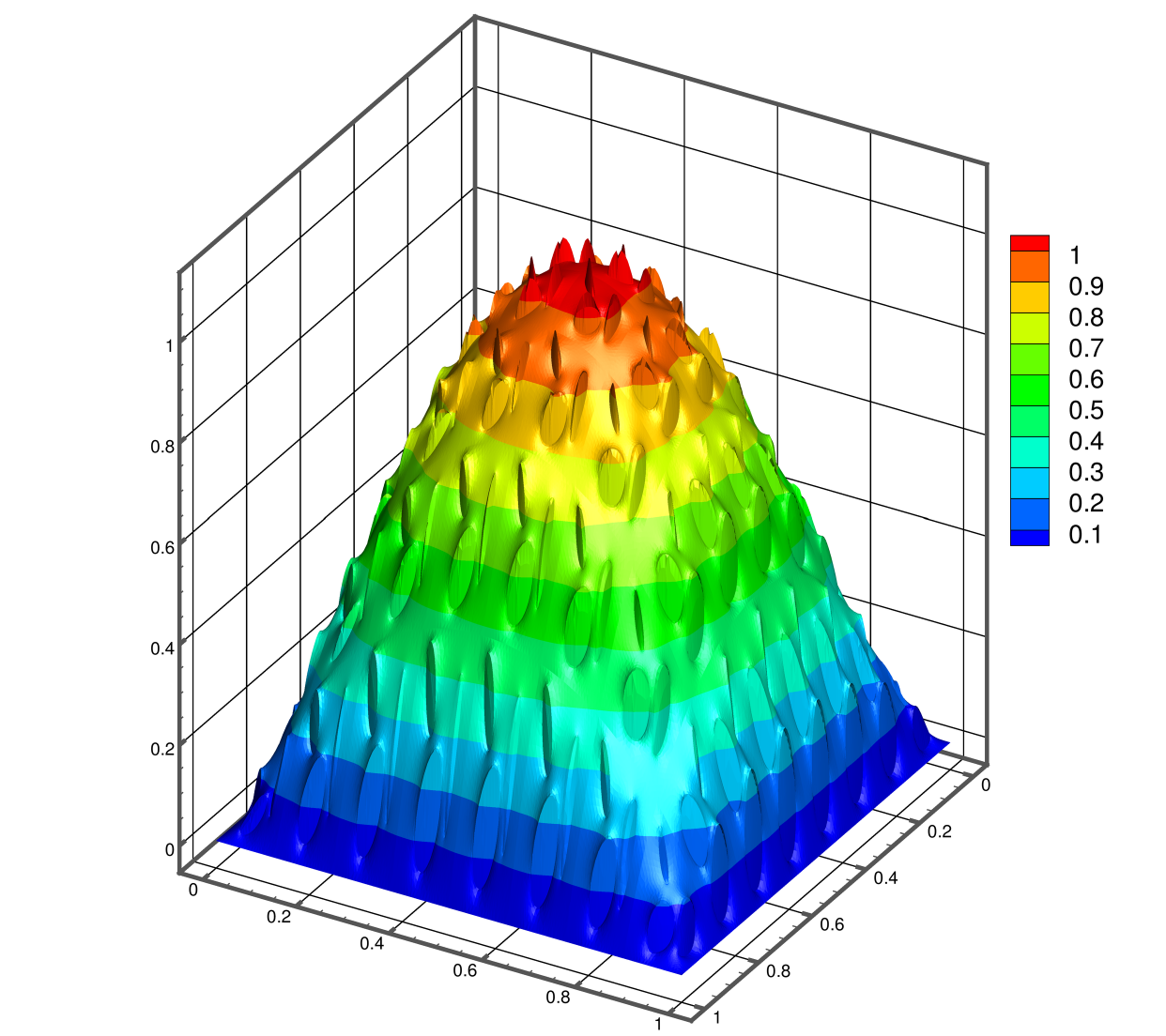} &
\includegraphics[width=0.17\textwidth]{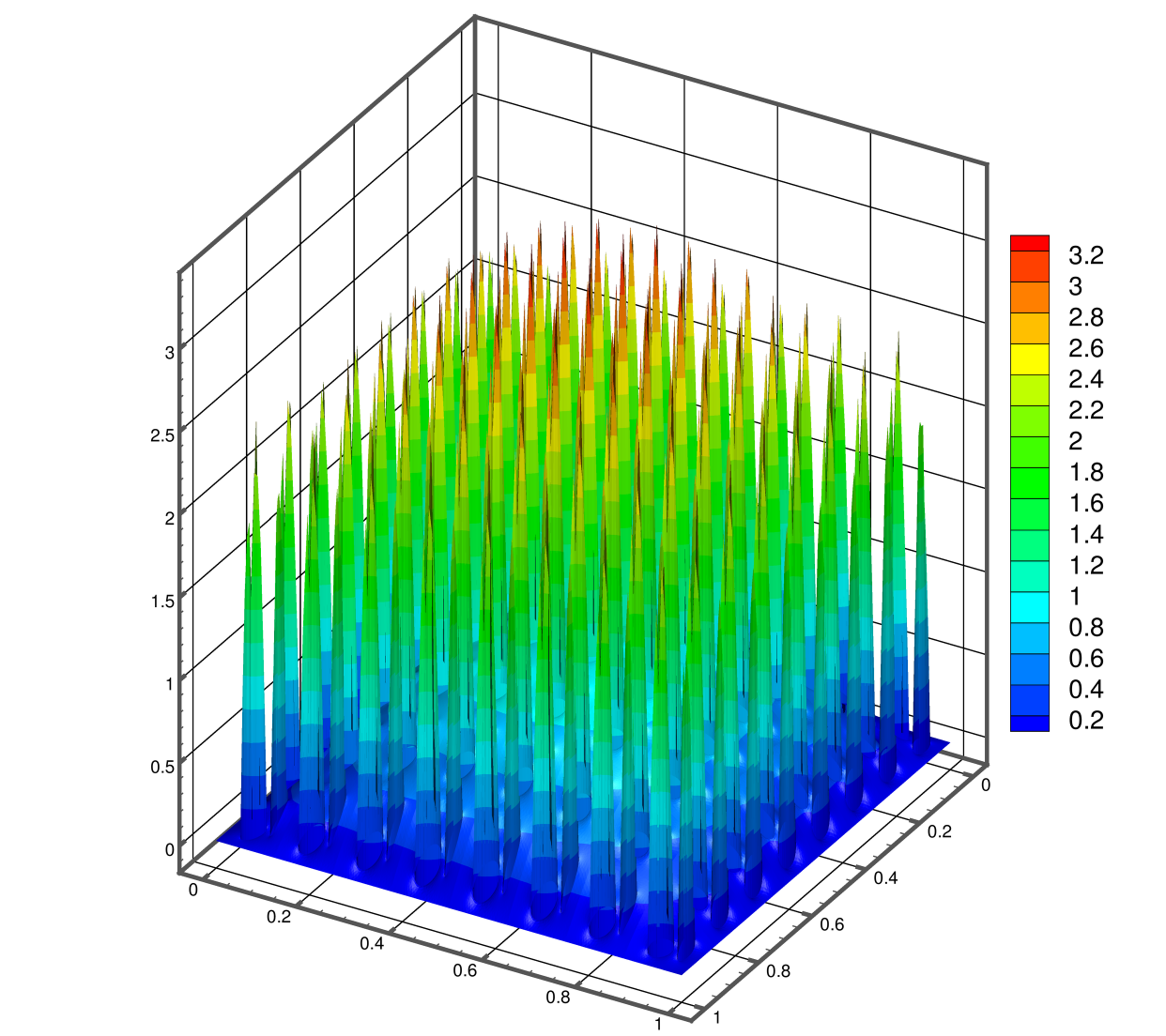} &
\includegraphics[width=0.17\textwidth]{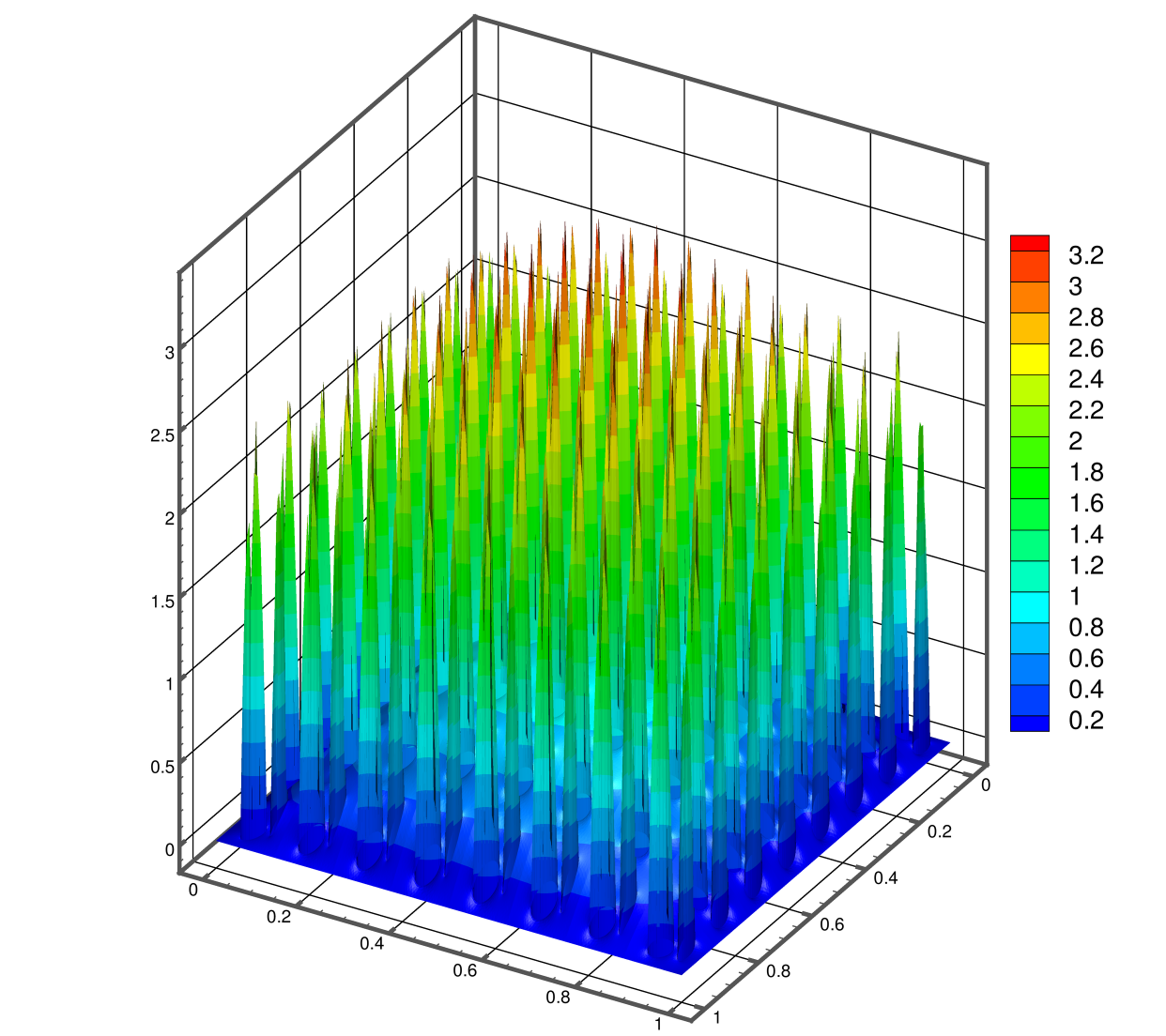}\\
(a) & (b) & (c) & (d) & (e)
\end{tabular}
\caption{Continuum 1 pressure fields for the two-dimensional three-ellipse problem at $T=1$: (a) $u_1^\eps$; (b) $u_1^{[\mathrm{FP}]}(T)$; (c) $u_1^{(1\eps)}$; (d) $u_1^{(2\eps)}$; (e) $u_{1,r}^{\eps,n}$.}
\label{fig:complex2d-u1-fields}
\end{figure}

\begin{figure}[htbp]
\centering
\begin{tabular}{@{}ccccc@{}}
\includegraphics[width=0.17\textwidth]{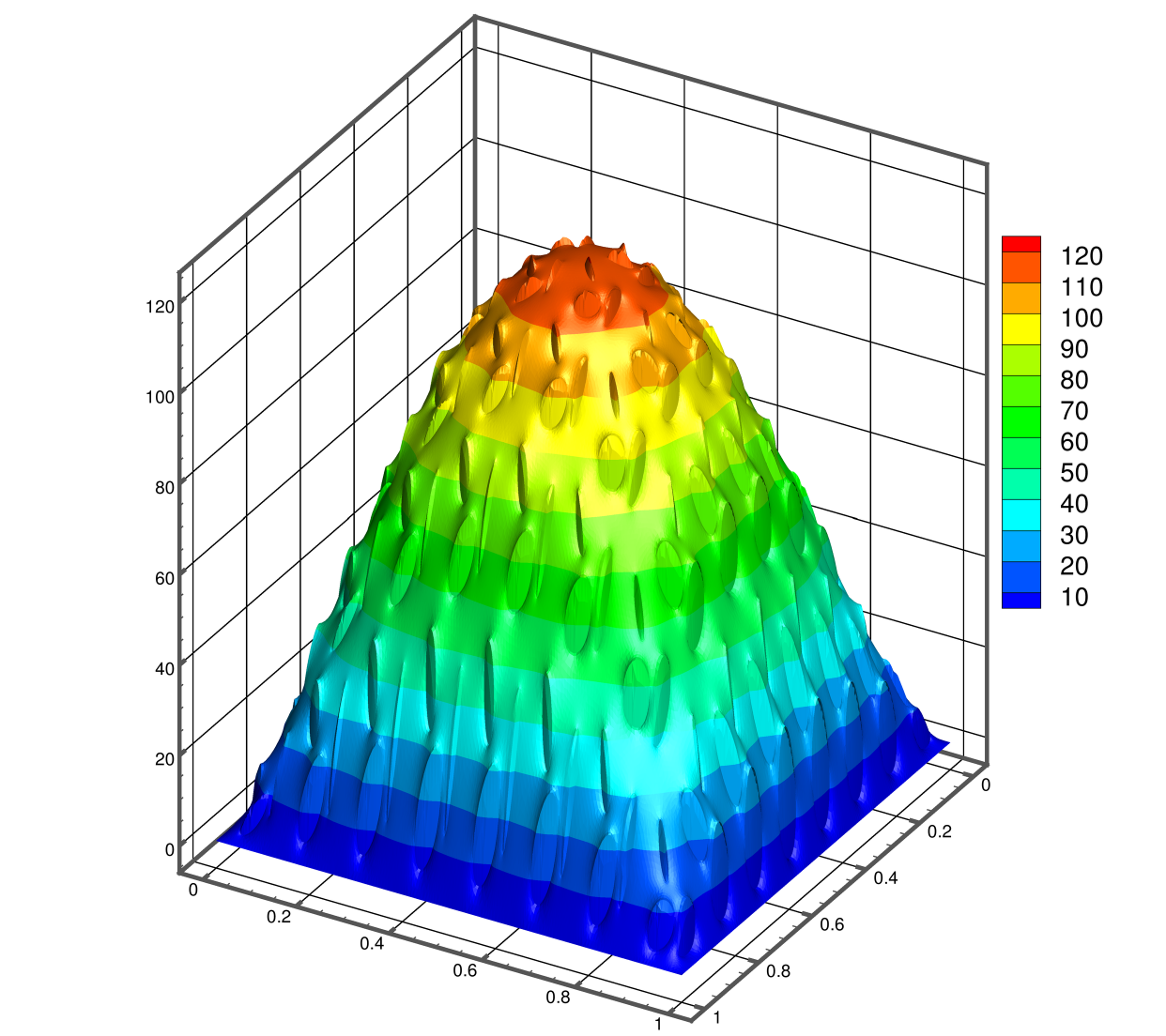} &
\includegraphics[width=0.17\textwidth]{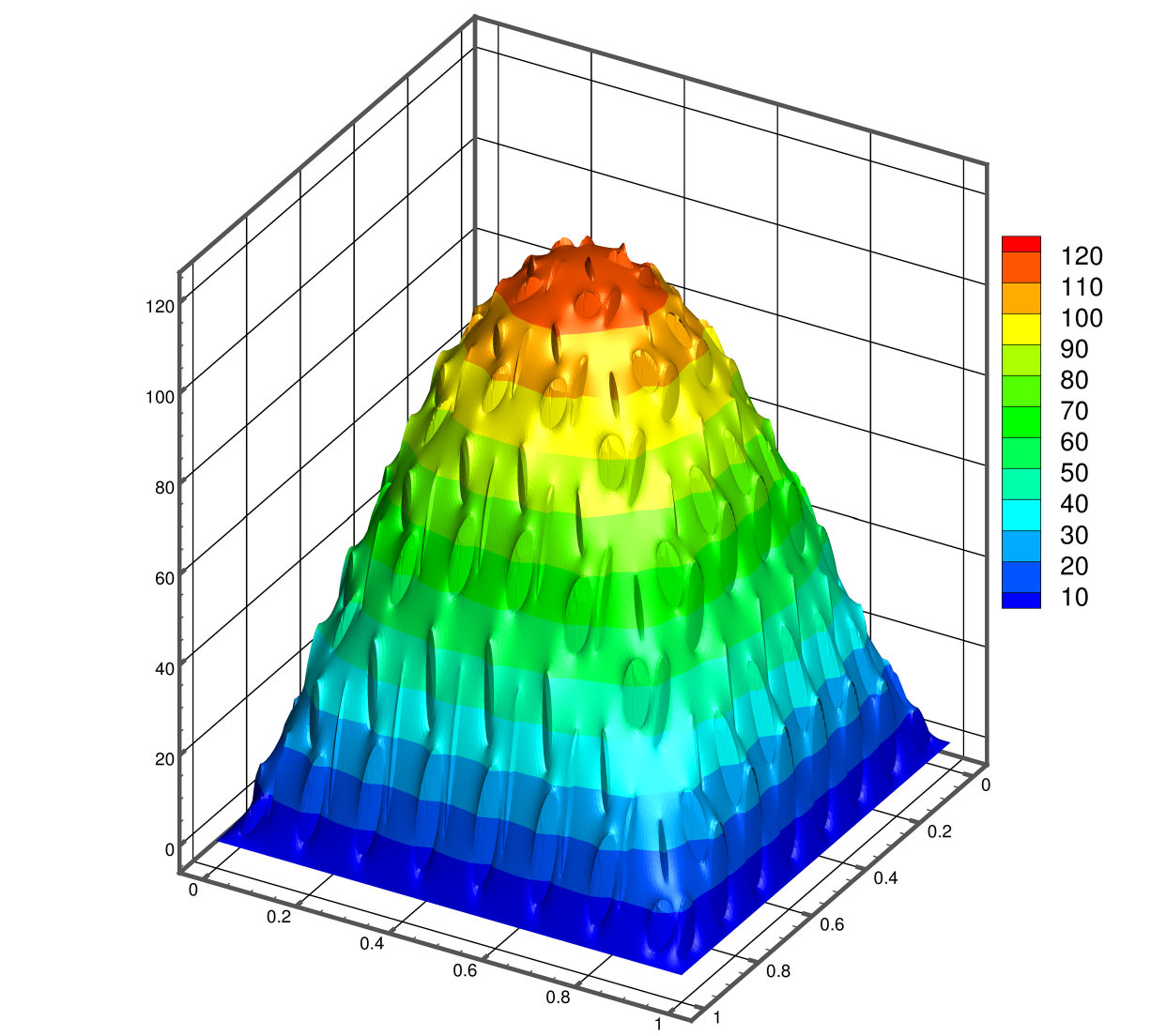} &
\includegraphics[width=0.17\textwidth]{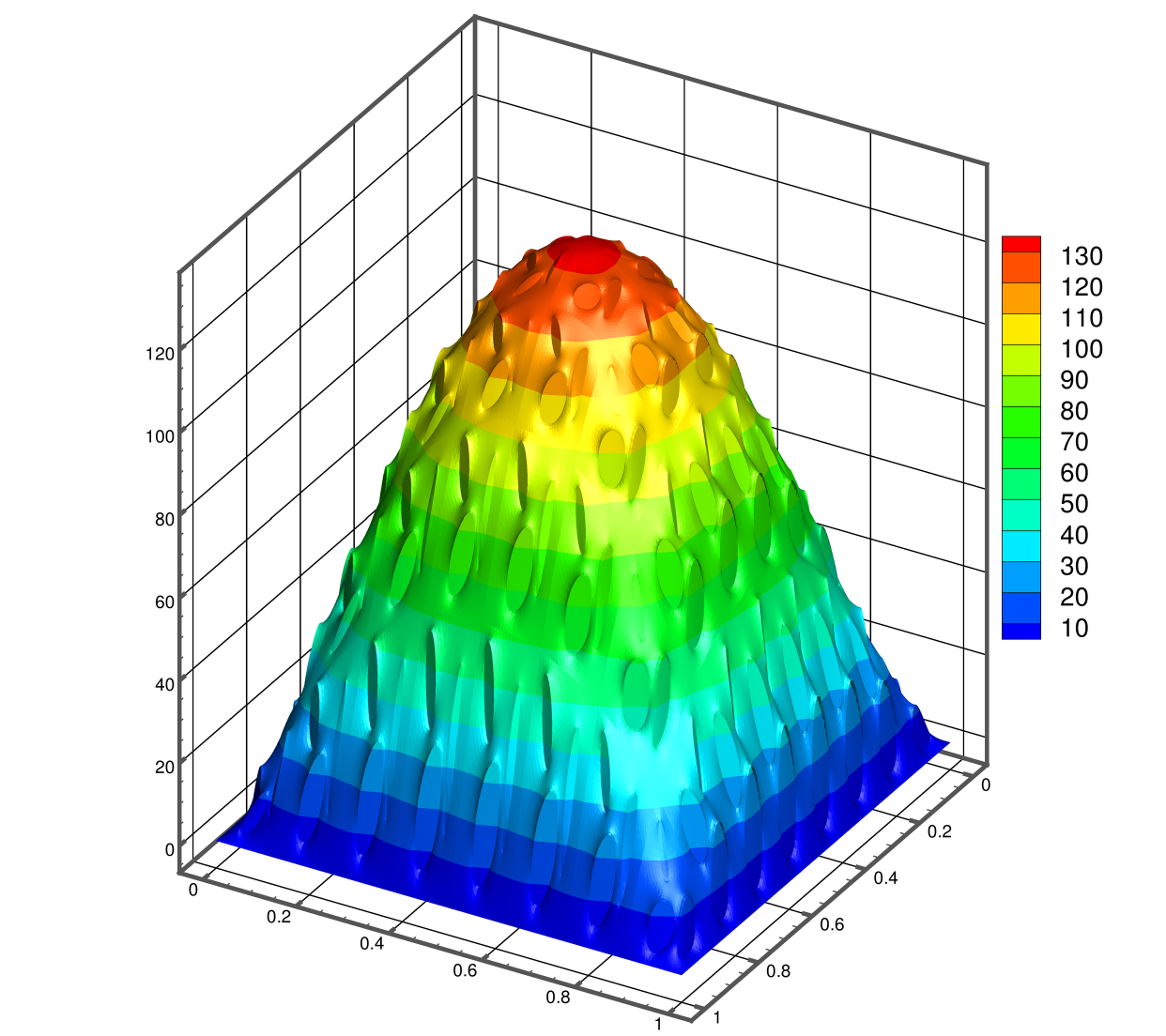} &
\includegraphics[width=0.17\textwidth]{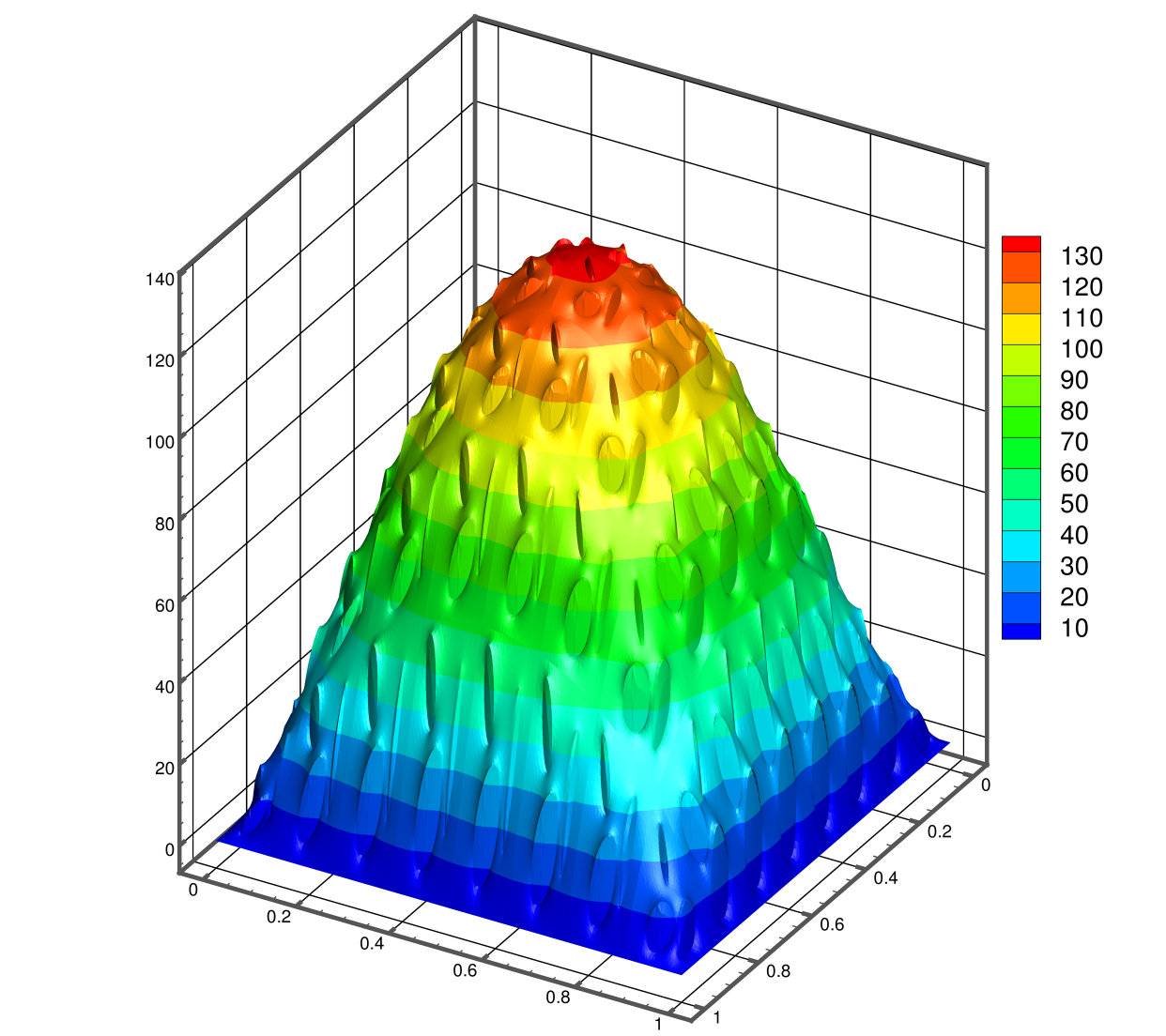} &
\includegraphics[width=0.17\textwidth]{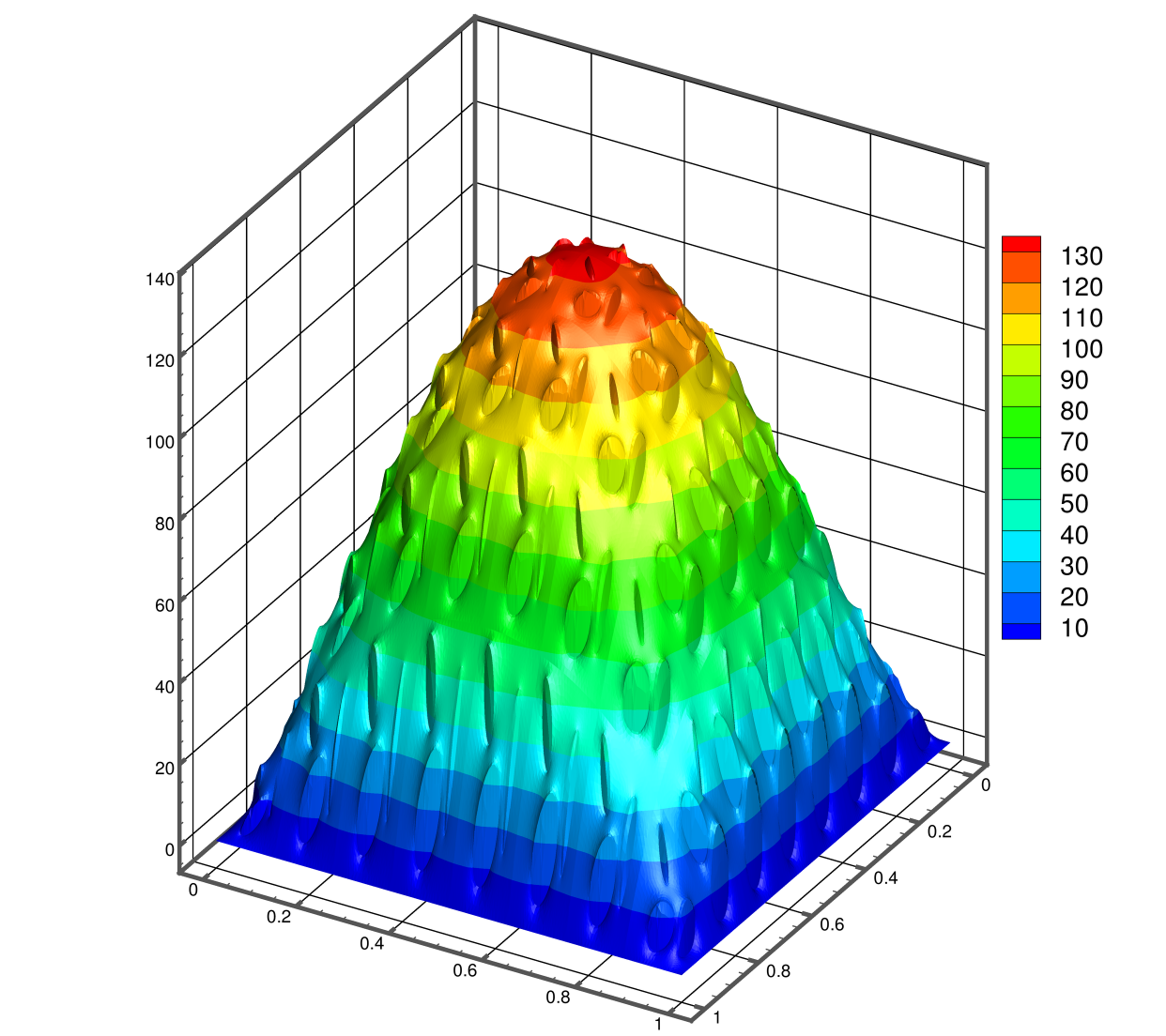}\\
(a) & (b) & (c) & (d) & (e)
\end{tabular}
\caption{Continuum 2 pressure fields for the two-dimensional three-ellipse problem at $T=1$: (a) $u_2^\eps$; (b) $u_2^{[\mathrm{FP}]}(T)$; (c) $u_2^{(1\eps)}$; (d) $u_2^{(2\eps)}$; (e) $u_{2,r}^{\eps,n}$.}
\label{fig:complex2d-u2-fields}
\end{figure}

\clearpage
The HOMS-POD fields in Figures~\ref{fig:circle2d-u1-fields}--\ref{fig:channel2d-u2-fields} retain the inclusion-localized fluctuations of both two-dimensional continuum components for the circular and channel microstructures. Figures~\ref{fig:sphere3d-u1-fields} and~\ref{fig:sphere3d-u2-fields} preserve the localized oscillations generated by the spherical inclusions, while Figures~\ref{fig:channel3d-u1-fields} and~\ref{fig:channel3d-u2-fields} preserve the directional channel pattern for both three-dimensional components. For the three-ellipse medium, Figures~\ref{fig:complex2d-u1-fields} and~\ref{fig:complex2d-u2-fields} show the strongly localized pressure features in continuum~1 and the different amplitude and spatial response of continuum~2. The POD-HOMS fields retain the envelope and inclusion-scale structures of the second-order HOMS solutions for both continua. Taken together, the sensitivity, error-history, and field comparisons show that the joint POD space captures the coupled transient behavior with a small training ensemble and without visible loss of the HOMS oscillatory structure. Because FEM-POD, HOMS, and HOMS-POD are all evaluated against the same fine-scale FEM solution, their accuracy can be compared directly under the two relative-error measures defined above.

\subsection{Computational efficiency of the POD-HOMS algorithms}
\label{subsec:computational-efficiency}

This subsection separates the two cost reductions in the method. POD lowers the cost of advancing the coupled homogenized trajectory, while the corrected-basis formulation removes repeated pointwise evaluation of the value, exchange, first-derivative, second-derivative, and time-derivative recovery terms. Tables~\ref{tab:total-time} and~\ref{tab:assembly-time} therefore report both end-to-end time and fine-scale output-assembly time. The FEM, FEM-POD, HOMS, and Algorithm~2 entries use the low-snapshot assembly run for each geometry; the Algorithm~1 entries use the matching low-snapshot noassembly run. Speedups are computed from the unrounded timing records.

\begin{table}[htbp]
\centering
\caption{End-to-end CPU time in seconds for the standard tests.}
\label{tab:total-time}
\resizebox{\textwidth}{!}{%
\begin{tabular}{llrrrrrr}
\toprule
Dimension & Geometry & FEM & FEM-POD & HOMS & HOMS-POD A1 & HOMS-POD A2 & HOMS/A2\\
\midrule
2D & circle  & 987.654 & 107.534 & 385.986 & 133.069 & 38.562 & 10.01\\
2D & channel & 317.487 & 36.255 & 240.009 & 85.561 & 24.638 & 9.74\\
2D & ellipses & 1266.737 & 208.853 & 373.077 & 148.216 & 63.232 & 5.90\\
3D & sphere  & 9800.250 & 250.874 & 2966.640 & 2599.200 & 99.561 & 29.80\\
3D & channel & 31429.800 & 712.753 & 10845.700 & 7312.000 & 176.372 & 61.49\\
\bottomrule
\end{tabular}}
\end{table}

The end-to-end comparison includes snapshot generation, POD construction, reduced operators, time integration, and recovery. Relative to full HOMS, Algorithm~2 gives total speedups from $5.90$ in the two-dimensional three-ellipse case to $61.49$ in the three-dimensional channel case. For the three-ellipse medium, the recorded FEM, FEM-POD, and HOMS times are $1266.737$, $208.853$, and $373.077$ seconds, respectively, while Algorithms~1 and~2 require $148.216$ and $63.232$ seconds. These method-time counters exclude mesh generation, error evaluation, and file output. FEM-POD can also be fast in the standard tests, but it requires fine-scale training solves and therefore does not retain this feasibility advantage when the fine grid exceeds memory.

\begin{table}[htbp]
\centering
\caption{Fine-scale output-assembly time and speedup.}
\label{tab:assembly-time}
\begin{tabular}{llrrrr}
\toprule
Dimension & Geometry & A1 (s) & A2 (s) & A1/A2 assembly & A1/A2 total\\
\midrule
2D & circle  & 95.933 & 0.805 & 119.1 & 3.45\\
2D & channel & 60.673 & 0.566 & 107.3 & 3.47\\
2D & ellipses & 83.354 & 0.898 & 92.8 & 2.34\\
3D & sphere  & 2427.580 & 3.031 & 800.9 & 26.11\\
3D & channel & 7168.070 & 9.933 & 721.7 & 41.46\\
\bottomrule
\end{tabular}
\end{table}

Algorithm~1 reconstructs the reduced homogenized solution and then evaluates the full HOMS expression at every output point and time level. Algorithm~2 shifts the spatial corrections of the retained modes to its offline stage, leaving only reduced-coefficient combinations and the separate derivative-basis contribution online. The resulting three-dimensional assembly speedups of $800.9$ and $721.7$ confirm the practical benefit of replacing repeated pointwise recovery by the stored-basis combinations described in Section~\ref{subsec:algorithm2}. For the three-ellipse case, corrected-basis gradient and recovery precomputation cost $0.214$ and $1.406$ seconds, respectively; the output-assembly time decreases from $83.354$ to $0.898$ seconds, giving a $92.8$-fold assembly speedup and a $2.34$-fold total A1/A2 speedup. The corresponding A1/A2 total speedups are smaller because both algorithms retain cell solves, snapshot generation, POD construction, and reduced-system assembly. Thus POD and corrected bases address different bottlenecks: POD reduces the transient solve cost, whereas corrected bases remove repeated high-order recovery cost.

\subsection{Large-scale three-dimensional application}
\label{subsec:large-scale}

The final example examines the feasibility of the proposed method for a three-dimensional problem beyond the memory limit of direct fine-scale approaches. The periodically repeated spherical-inclusion configuration in Figure~\ref{fig:large-geometry} uses the same material parameters, source, and time discretization as the preceding three-dimensional spherical test. With $T=1$ and $\dt=0.002$, $500$ time steps are performed and ten coupled homogenized snapshots are used to construct the joint POD space.

\begin{figure}[htbp]
\centering
\includegraphics[width=0.55\textwidth]{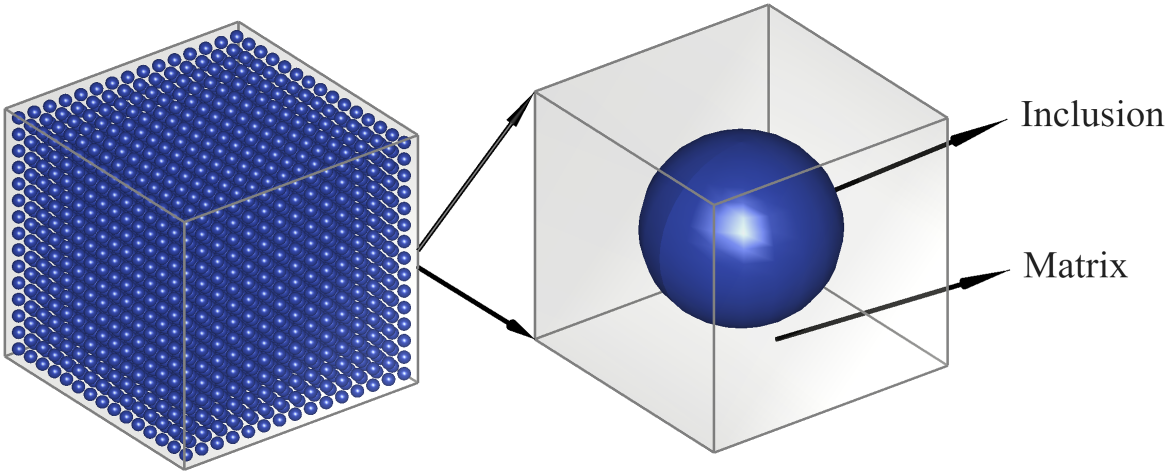}
\caption{Periodically heterogeneous geometry and representative cell for the large-scale test.}
\label{fig:large-geometry}
\end{figure}

The cell, homogenized, and fine output meshes are used for the local corrector problems, macroscopic evolution, and multiscale reconstruction, respectively. As summarized in Table~\ref{tab:large-scale}, the fine output mesh contains $8\,921\,919$ elements, whereas the homogenized mesh contains $93\,750$ elements. Direct FEM and fine-scale FEM-POD exceed the available 32 GB memory because they require fine-scale evolution or fine-scale snapshot generation, while HOMS and HOMS-POD remain feasible.

\begin{table}[htbp]
\centering
\caption{Discretization sizes and direct-method availability for the large-scale three-dimensional test.}
\label{tab:large-scale}
\begin{tabular}{lr}
\toprule
Quantity & Value\\
\midrule
Fine-scale output elements & $8\,921\,919$\\
Cell elements & $145\,815$\\
Homogenized elements & $93\,750$\\
Time steps & $500$\\
Coupled homogenized snapshots & $10$\\
Direct FEM / fine-scale FEM-POD & Exceeds 32 GB\\
\bottomrule
\end{tabular}
\end{table}

Table~\ref{tab:large-scale-timing} reports the end-to-end timing decomposition, including POD training and corrected-basis preprocessing.

\begin{table}[htbp]
\centering
\caption{CPU-time decomposition for the large-scale three-dimensional test (seconds).}
\label{tab:large-scale-timing}
\begin{tabular}{lrr}
\toprule
Computational stage & HOMS & HOMS-POD (Algorithm~2)\\
\midrule
Cell problems & $133.310$ & $133.310$\\
Full homogenized trajectory & $5538.160$ & --\\
Coupled snapshot generation & -- & $115.066$\\
POD mode extraction & -- & $0.353$\\
Reduced-matrix construction & -- & $26.060$\\
Reduced time integration & -- & $0.009$\\
Corrected-basis preprocessing & -- & $7.909$\\
Fine-scale output assembly & $2497.610$ & $22.233$\\
\midrule
Total & $8169.080$ & $304.940$\\
\bottomrule
\end{tabular}
\end{table}

The homogenized POD stage, including snapshot generation, POD construction, reduced-matrix assembly, and reduced time integration, requires $141.488$ s, compared with $5538.160$ s for the full homogenized trajectory. Corrected-basis reconstruction reduces the fine-scale output-assembly time from $2497.610$ s to $22.233$ s. Consequently, the total CPU time decreases from $8169.080$ s for HOMS to $304.940$ s for HOMS-POD, corresponding to an overall speedup of approximately $26.8$.

Figures~\ref{fig:large-u1-fields} and~\ref{fig:large-u2-fields} compare the HOMS and HOMS-POD fields on the central output section at $T=1$. The HOMS-POD solutions reproduce the macroscopic envelopes and periodic microscopic oscillations of the corresponding HOMS fields for both continua, without visible smoothing or phase shift. Since the direct fine-scale FEM reference exceeds the available memory, FEM-referenced errors are not reported for this test.

\begin{figure}[htbp]
\centering
\begin{tabular}{@{}cc@{}}
\includegraphics[width=0.32\textwidth]{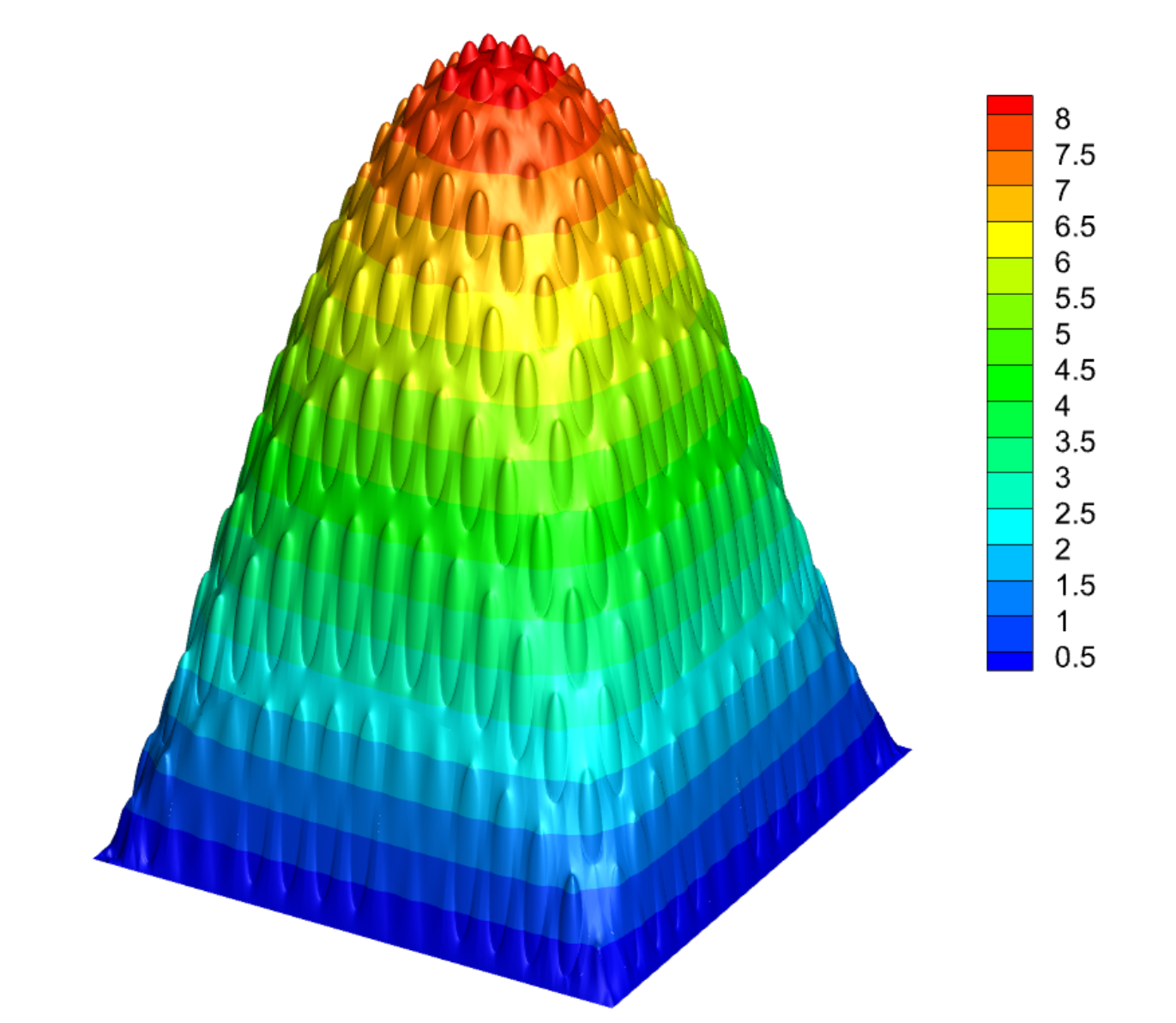} &
\includegraphics[width=0.32\textwidth]{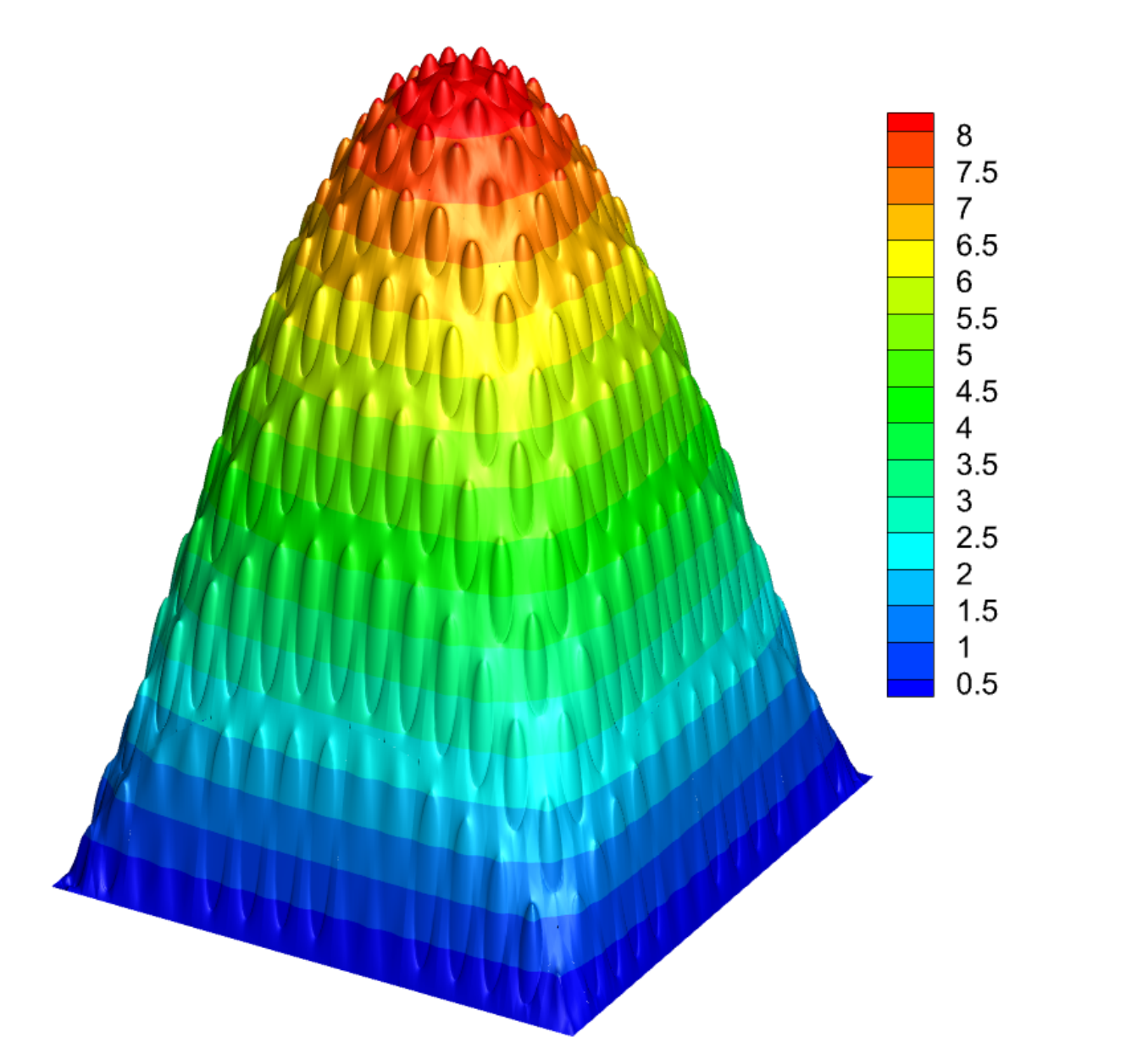}\\
(a) & (b)
\end{tabular}
\caption{Continuum 1 pressure fields for the large-scale three-dimensional spherical-inclusion problem at $T=1$ (central output section): (a) $u_1^{(2\eps)}$; (b) $u_{1,r}^{\eps,n}$.}
\label{fig:large-u1-fields}
\end{figure}

\begin{figure}[htbp]
\centering
\begin{tabular}{@{}cc@{}}
\includegraphics[width=0.32\textwidth]{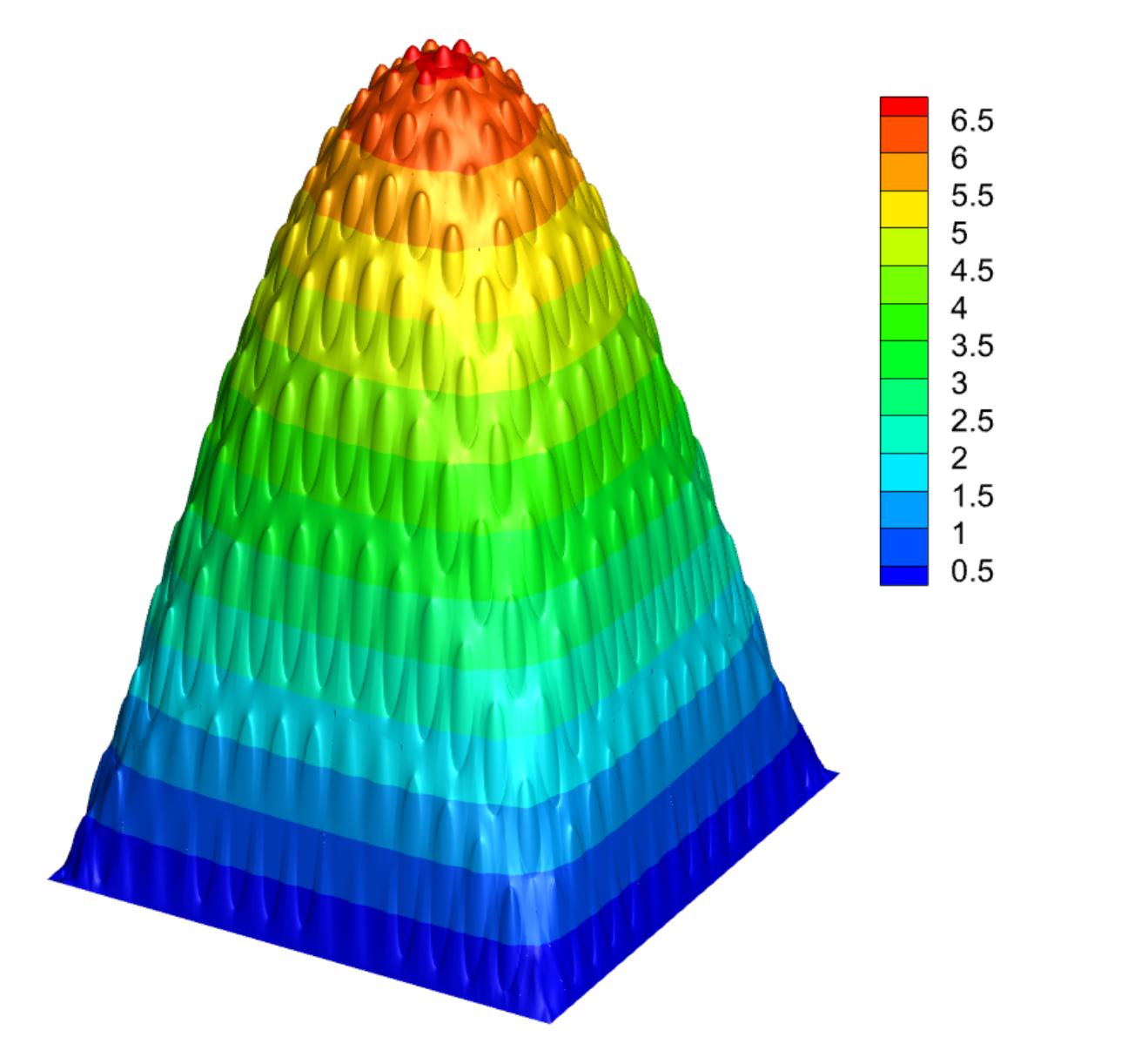} &
\includegraphics[width=0.32\textwidth]{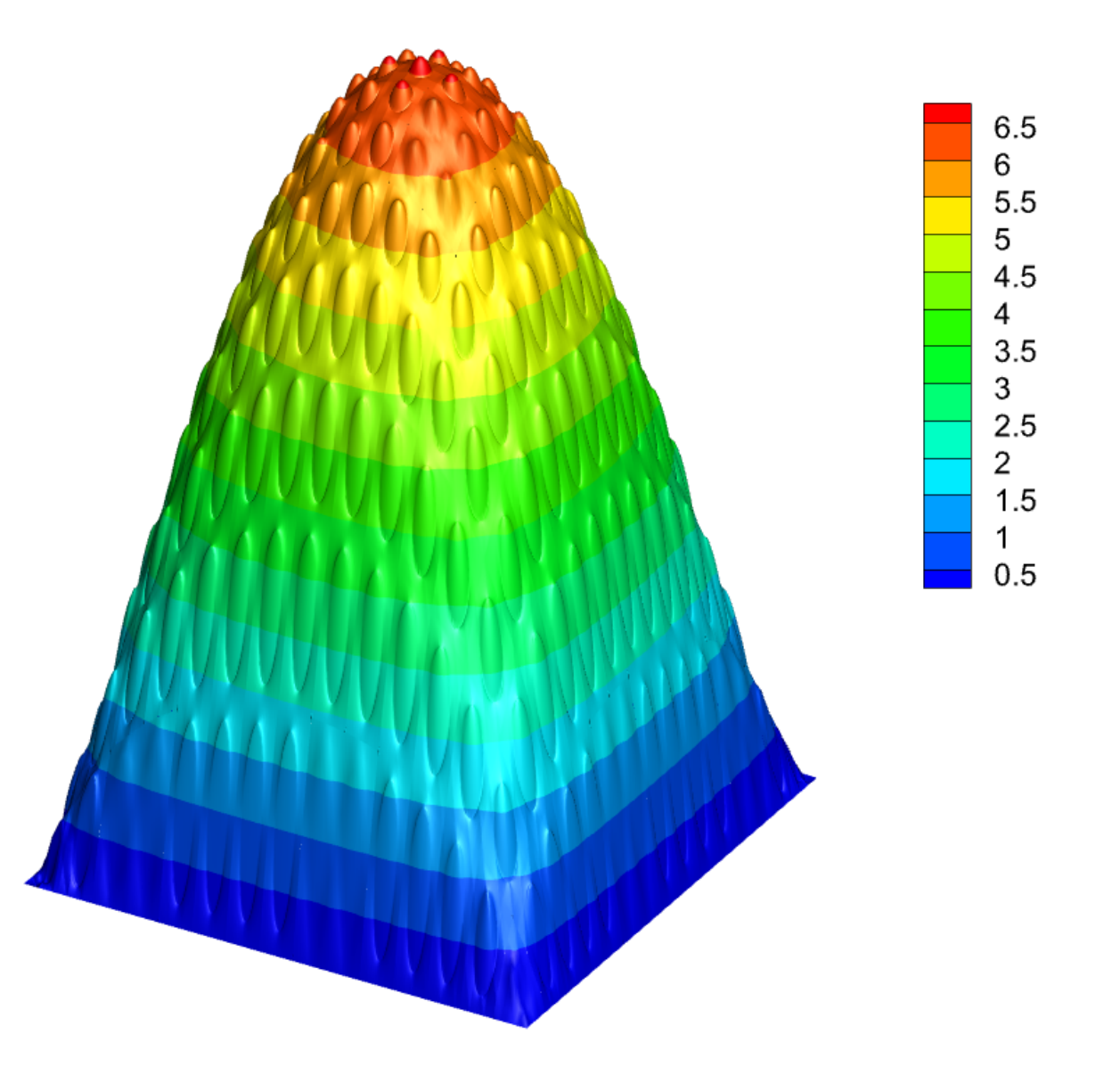}\\
(a) & (b)
\end{tabular}
\caption{Continuum 2 pressure fields for the large-scale three-dimensional spherical-inclusion problem at $T=1$ (central output section): (a) $u_2^{(2\eps)}$; (b) $u_{2,r}^{\eps,n}$.}
\label{fig:large-u2-fields}
\end{figure}

\section{Conclusions}
\label{sec:conclusions}

A POD-accelerated high-order multiscale framework has been developed for transient multi-continuum problems in highly heterogeneous media. The coupled homogenized variables are treated as a unified vector state, from which a mass-weighted joint POD space is constructed and used to reduce the fully discrete homogenized system. On this basis, two recovery strategies are introduced. Algorithm~1 reconstructs the reduced homogenized fields and performs the complete HOMS recovery at each output time, whereas Algorithm~2 applies the spatial recovery operators to the retained POD modes offline and reconstructs the final multiscale field through linear combinations of corrected and time-derivative bases. Owing to the linearity of the recovery operator, the two strategies produce the same POD-HOMS field up to interpolation and floating-point roundoff.

Theoretical analysis was carried out for both the reduced homogenized dynamics and the recovered multiscale field. A G\aa rding framework was used to accommodate the coupled drift and exchange terms, and an error estimate was established between the fully discrete homogenized solution and its POD-reduced approximation. By further accounting for the value, gradient, inter-continuum difference, second-derivative, and time-derivative recovery contributions, a recovered POD-HOMS error estimate was obtained. The numerical experiments show that the reduced solutions preserve the accuracy and local oscillatory structures of the corresponding second-order HOMS solutions. The corrected-basis formulation substantially reduces repeated recovery costs, with pronounced gains in three-dimensional computations. In the large-scale test, direct fine-scale FEM and FEM-POD exceed the available memory, whereas HOMS-POD remains feasible and reduces the total computational time from 8169.080 s to 304.940 s.

Future work will extend the present framework to nonlinear exchange mechanisms and nonperiodic heterogeneous media. Adaptive snapshot enrichment will also be investigated to improve the robustness of the reduced space for more complex transient regimes. In addition, parameter-dependent corrected bases may provide an efficient route toward repeated-query and multi-parameter simulations.

\section*{CRediT authorship contribution statement}

Jiayuan Peng: Methodology, Software, Investigation, Data curation, Writing--original draft. Hao Dong: Conceptualization, Methodology, Supervision, Funding acquisition, Writing--review and editing. Jiale Linghu: Software, Validation, Data curation.

\section*{Declaration of competing interest}

The authors declare no known competing financial interests or personal relationships that could have influenced the work reported in this paper.

\section*{Data availability}

The numerical data are contained in the accompanying multi-continuum POD-HOMS experiment folders. Tables were assembled from the archived timing summaries and relative-error files.

\section*{Acknowledgements}

This work was supported by the National Natural Science Foundation of China (No.~12471387), Xidian University Specially Funded Project for Interdisciplinary Exploration (No.~TZJH2024008), the Fundamental Research Funds for the Central Universities (No.~QTZX25082), and the Innovation Capability Support Program of Shaanxi Province (No.~2024RS-CXTD-88).

\end{document}